\documentclass[a4paper,11pt,leqno]{amsart}
\usepackage{amsmath}
\usepackage{amsthm}
\usepackage{amsfonts}
\usepackage{amssymb}
\usepackage{texdraw}
\usepackage{mathbbol}
\usepackage{txfonts}
\usepackage{color}
\usepackage{mathrsfs}
\usepackage{verbatim}
\usepackage{stmaryrd}

\newtheorem{theorem}{Theorem}[section]{\bf}{\it}
\newtheorem{lemma}[theorem]{Lemma}{\bf}{\it}
\newtheorem{proposition}[theorem]{Proposition}{\bf}{\it}
\newtheorem{corollary}[theorem]{Corollary}{\bf}{\it}
\newtheorem{example}[theorem]{Example}{\bf}{\it}
{\bf}{\it} 
{\bf}{\it}
{\bf}{\it}

\newtheorem{defn}[theorem]{Definition}
\theoremstyle{remark}

\newtheorem{remark}[theorem]{Remark}

\numberwithin{equation}{section}


\newcommand{\R}{\mathbb R}

\newcommand{\N}{\mathbb N}

\newcommand{\loc}{{\operatorname{loc}}}

\newcommand{\capa}{\operatorname{cap}}

\newcommand{\spt}{\operatorname{spt}}



\newdimen\vintkern\vintkern11pt
\def\vint{-\kern-\vintkern\int}


\newcommand{\haus}{\mathcal{H}}

\newcommand{\dd}[1]{\;\mathrm{d} \| #1 \|}

\newcommand{\cW}{\mathcal{W}}
\renewcommand{\span}{\mathrm{span}}


\newcommand{\Lipconst}{\operatorname{Lip}} 


\newcommand{\poly}[1]{\mathcal{P}_{#1}}   
\newcommand{\polypar}[1]{\mathcal{P}_{#1}^+}   
\newcommand{\flatchain}{{\mathscr F}} 

\newcommand{\mass}{{\mathbf M}} 
\newcommand{\bdry}{{\partial}} 

\newcommand{\dblebracket}[1]{\llbracket #1 \rrbracket}  

\newcommand{\rstr}{\:\mbox{\rule{0.1ex}{1.2ex}\rule{1.1ex}{0.1ex}}\:} 

\newcommand{\lm}{{\mathscr L}} 

\begin{document}

\title{Wolfe's theorem for weakly differentiable cochains}
\author{Camille Petit \and Kai Rajala}
\author{Stefan Wenger}
\address{C.P. and K.R.: University of Jyv\"askyl\"a, Department of Mathematics and Statistics (P.O. Box 35), FI-40014 University of Jyv\"askyl\"a, Finland}
\address{S.W.: Universit\' e de Fribourg, Math\' ematiques, Ch. du Mus\' ee 23, 1700 Fribourg, Switzerland}
\thanks{C.P. and K.R. were supported by the Academy of Finland, project $\#$257482. Parts of this research were carried out when K.R. was visiting Universit\'e de Fribourg. He would like to thank the department for its hospitality. Parts of this research were carried out when S.W. was visiting the Department of Mathematics and Statistics at the University of Jyv\"askyl\"a. He would like to thank the department for its hospitality.}
\subjclass[2010]{49Q15, 46E35, 53C65, 49J52}

\begin{abstract}
A fundamental theorem of Wolfe isometrically identifies the space of flat differential forms of dimension $m$ in $\R^n$ with the space of flat $m$-cochains, that is, the dual space of flat chains of dimension $m$ in $\R^n$. The main purpose of the present paper is to generalize Wolfe's theorem to the setting of Sobolev differential forms and Sobolev cochains in $\R^n$. A suitable theory of Sobolev cochains has recently been initiated by the second and third author. It is based on the concept of upper norm and upper gradient of a cochain, introduced in analogy with Heinonen-Koskela's concept of upper gradient of a function.
\end{abstract}

\maketitle


\section{Introduction}

In the 1940's, Whitney initiated a geometric integration theory, see \cite{Whi57}, the purpose of which was to integrate a quantity over ``$m$-dimensional sets" in such a way that the integral depends on the position of the set in $\R^n$. The quantities one integrates are $m$-dimensional flat forms and the sets over which one integrates are $m$-dimensional flat chains. Flat forms are $L^\infty$-differential forms with $L^\infty$-exterior derivatives. The flat norm of such a form is defined as the maximum of the $L^\infty$-norm of the form and that of its derivative. In order to define flat chains, one first considers the space $\poly{m}= \poly m(\R^n)$ of polyhedral chains in $\R^n$, that is finite formal sums of oriented $m$-dimensional polyhedra in $\R^n$ with real multiplicities, see Section \ref{sect:preliminaries} for precise definitions. One equips $\poly m$ with the flat norm, given for $T\in\poly m$ by
$$|T|_\flat := \inf \{ \mass(R)+\mass(S): R\in\poly{m}, S\in\poly{m+1}, R+\bdry S=T\},$$
where $\bdry S$ is the boundary of $S$ and $\mass(R)$ is the mass of $R$. The completion of $\poly m$ under the flat norm is called the space of flat $m$-chains in $\R^n$ and denoted $\flatchain_m$. Elements of the dual space of $\flatchain_m$ are called flat $m$-cochains.
In \cite{Wol48}, see also \cite{Whi57}, Wolfe proved the following fundamental theorem: the space of flat $m$-forms, endowed with the flat norm, is isometric to the space of flat $m$-cochains.
Wolfe's theorem has recently been generalized to the setting of Banach spaces by Snipes in \cite{Sni13}, where she defines a flat partial differential form in a Banach space and shows that the space of these forms is isometrically the dual space of the space of flat chains as defined by Adams \cite{Ada08}. Moreover, Wolfe's theorem has recently been used by Heinonen-Sullivan \cite{HS02} and Heinonen-Keith \cite{HKe11}, see also Heinonen-Rickman \cite{HR02}, to give conditions under which a metric space is locally bi-Lipschitz equivalent to $\R^n$. 

The main purpose of the present paper is to generalize the classical Wolfe's theorem to the setting of Sobolev differential forms and Sobolev cochains in $\R^n$. A suitable theory, based on upper gradients, of Sobolev cochains in complete metric measure spaces has recently been initiated by the second and third authors in \cite{RW13}. Before stating our main results we briefly recall the relevant definitions from \cite{RW13}, restricting ourselves to the setting of $\R^n$. We refer to Section~\ref{sec:Sobolev-cochains} for precise definitions.
A subadditive $m$-cochain on $\poly m$ is a function $X: \poly m \to \overline \R$ which satisfies $X(0)=0$ and which is subadditive in the sense that
$$|X(T)|\leq |X(T+R)|+|X(R)|,$$
for all $T,R\in\poly m$. If furthermore $X(T+R)=X(T)+X(R)$ whenever each term is finite then $X$ is called a (additive) cochain. In \cite{RW13} a notion of upper gradient of a subadditive cochain is defined in analogy with Heinonen-Koskela's notion of upper gradient of a function \cite{HKST}.
 A Borel function $g:\R^n \to [0,\infty]$ is called upper gradient of $X$ if
$$|X(T)|\leq \int_{\R^n} g\ d\| S\|$$
for all $T\in\poly{m}$ and $S\in\poly{m+1}$ satisfying $\bdry S=T$, where $\| T\|$ denotes the mass measure of $T$, see Section~\ref{subsec:polyhedral chains}. Similarly, a Borel function $h:\R^n \to [0,\infty]$ is an upper norm of $X$ if 
$$|X(T)|\leq \int_{\R^n} h\ d\| T\|$$ for all $T\in\poly m$.
Given an additive cochain $X$ with upper norm in $L^q(\R^n)$ and upper gradient in $L^p(\R^n)$, we define its Sobolev norm by $$\| X\|_{q,p}=\max\left\{\inf  \|h\|_q, \inf\| g\|_p\right\},$$ where the infima are taken with respect to upper norms $h$ and upper gradients $g$ of $X$, respectively. Note that this norm is different from but equivalent to the norm introduced in \cite{RW13}. The Sobolev space $W_{q,p}(\poly{m})$ of additive cochains is the set of equivalence classes of additive cochains with upper norm in $L^q(\R^n)$ and upper gradient in $L^p(\R^n)$, under the equivalence relation defined by $X_1\sim X_2$ if $\|X_1-X_2\|_{q,p}=0$. 


The main result of the present paper is the following generalization of the classical Wolfe's theorem.


\begin{theorem}   \label{thm:main result}
Let $1\leq m\leq n$ and $1<q,p< \infty$. If $p> n-m$ or $q\leq \frac{pn}{n-p}$ then the space $W_d^{q,p}(\R^n,\bigwedge^m)$ of Sobolev differential forms is isometrically isomorphic to the space $W_{q,p}(\poly{m})$.
\end{theorem}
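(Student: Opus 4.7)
The plan is to establish the isometric isomorphism by constructing two mutually inverse, norm-nonincreasing maps $\omega\mapsto X_\omega$ and $X\mapsto \omega_X$ between $W_d^{q,p}(\R^n,\bigwedge^m)$ and $W_{q,p}(\poly{m})$; the existence of such inverses then forces both to be isometries.

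\textbf{Forward direction: forms to cochains.} For $\omega\in W_d^{q,p}(\R^n,\bigwedge^m)$, set $X_\omega(T):=\int_T\omega$ for $T\in\poly{m}$. Since $\omega$ is only defined a.e., one first justifies this pointwise integration via mollification $\omega_\eta=\omega*\phi_\eta$, observing that the classical integrals $\int_T\omega_\eta$ converge as $\eta\to 0$ thanks to the trace-type control on $m$-dimensional polyhedra provided by the hypothesis $p>n-m$ or $q\leq pn/(n-p)$. Once $X_\omega$ is defined, $|\omega|$ is immediately an upper norm since $|X_\omega(T)|\leq \int_T |\omega|\,d\|T\|$, and Stokes' theorem gives $X_\omega(T)=\int_S d\omega$ whenever $\bdry S=T$, so $|d\omega|$ is an upper gradient. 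This yields $\|X_\omega\|_{q,p}\leq \|\omega\|_{W_d^{q,p}}$.

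\textbf{Reverse direction: cochains to forms.} Given $X\in W_{q,p}(\poly{m})$ with upper norm $h\in L^q$ and upper gradient $g\in L^p$, the form $\omega_X$ is built by a Wolfe-style averaging. For a standard mollifier $\phi_\eta$ and each multi-index $I=(i_1,\dots,i_m)$, let $T_{y,r,I}$ be a small oriented $m$-simplex at $y$ in direction $dx^I$, and define a smooth $m$-form $\omega_X^\eta$ whose $I$-component at $x$ is obtained by convolving (in $y$) an appropriate scale average of $r^{-m}X(T_{y,r,I})$ against $\phi_\eta(x-y)$. The upper norm bound translates into $\|\omega_X^\eta\|_{L^q}\leq \|h\|_q$; applying the upper gradient bound with $S$ a small $(m+1)$-chain whose boundary encodes an infinitesimal translation of $T_{y,r,I}$ yields $\|d\omega_X^\eta\|_{L^p}\leq \|g\|_p$. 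Extracting a weak limit as $\eta\to 0$ then produces $\omega_X\in W_d^{q,p}$ with $\|\omega_X\|_{W_d^{q,p}}\leq \|X\|_{q,p}$, and Lebesgue differentiation shows $X_{\omega_X}=X$ on $\poly{m}$.

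\textbf{Main obstacle.} The principal difficulty lies in the reverse construction, and specifically in passing from the upper gradient inequality, which bounds $|X(T)|$ by an $(m+1)$-dimensional integral of $g$, to a genuine $L^p$ bound on the $(m+1)$-form $d\omega_X$. This is precisely where the dichotomy $p>n-m$ or $q\leq pn/(n-p)$ intervenes: in the first regime, Sobolev traces of $L^p$-functions are pointwise well-defined on generic $m$-polyhedra; in the subcritical regime, the Sobolev embedding together with Fubini-type slicing converts $(m+1)$-dimensional $L^p$ control into $L^q$ control on $m$-slices. Existence and measurability of the defining limits are comparatively standard, handled via Lebesgue density and maximal function arguments.
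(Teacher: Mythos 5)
Your outline of the two maps is reasonable in spirit, but it misses the heart of the matter and misplaces where the hypothesis on $p,q$ is used. The forward map $\omega\mapsto X_\omega$ does not need any hypothesis of the form $p>n-m$ or $q\leq pn/(n-p)$: by mollifying $\omega$ and applying Fuglede's lemma to the $L^q$/$L^p$ convergence of the coefficients, one gets that $\int_T\omega_\eta$ converges for $M_q$-a.e.\ $T$, and this defines $X_\omega$ for all $1\leq q,p<\infty$ with $\|X_\omega\|_{q,p}\leq\|\omega\|_{q,p}$. No trace theory is involved at this stage, and attributing the hypothesis to a ``trace-type control on $m$-dimensional polyhedra'' for the forward map is incorrect.

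The genuine gap is your final sentence, ``Lebesgue differentiation shows $X_{\omega_X}=X$ on $\poly{m}$.'' This is precisely the hard step of the theorem and it is not a consequence of Lebesgue differentiation. What Lebesgue differentiation gives you is that $X$ and $X_{\omega_X}$ agree on translates of chains lying in coordinate $m$-planes (for a.e.\ translation parameter). To conclude agreement on \emph{all} of $\poly m$ — i.e.\ that a cochain vanishing on coordinate-parallel polyhedra and with vanishing coboundary on coordinate-parallel $(m+1)$-polyhedra is the zero element of $W_{q,p}(\poly m)$ — requires a real argument, and this is where the paper spends most of its effort. The paper's route is: (a) pass to the averaged cochains $X_r(T)=\fint_{B(0,r)}X(\varphi_{x\,\#}T)\,dx$, which inherit $L^\infty$ upper norm and upper gradient; (b) apply the Federer--Fleming deformation theorem to write an arbitrary $T\in\poly m$ as $T'+R+\partial S$ with $T'$ coordinate-parallel and $\mass(R),\mass(S)$ arbitrarily small, forcing $X_r(T)=0$; (c) invoke the continuity result $X_r(T)\to X(T)$ outside a family of $\nu$-modulus zero, which needs exactly the constraint $p>n-m$ or $\nu=\min\{q,pn/(n-p)\}\geq q$. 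Step (c) is where the hypothesis of the theorem is used, and it rests on a nontrivial quantitative estimate (Riesz potential bounds, an isoperimetric-type estimate for polyhedral cycles, and maximal function estimates) that your proposal does not supply. Without this mechanism your argument does not close, because nothing in a local Lebesgue-point analysis explains why a cochain that vanishes on coordinate-parallel chains must vanish on all polyhedral chains up to $(q,p)$-capacity zero.
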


Recall that the space $W_d^{q,p}(\R^n,\bigwedge^m)$ of Sobolev differential $m$-forms consists of those $L^q$-integrable differential $m$-forms $\omega$ whose distributional exterior derivatives $d\omega$ are $L^p$-integrable. It is endowed with the norm $$\|\omega\|_{q,p}= \max\left\{\left(\int \|\omega(x)\|^qdx\right)^\frac{1}{q}, \left(\int \|d\omega(x)\|^pdx\right)^\frac{1}{p}\right\},$$ where $\|\omega(x)\|$ denotes the comass norm of $\omega(x)$. See Section \ref{sect:preliminaries} for the precise definitions. It is well-known that the Sobolev space $W^{1,p}$ of functions is (for all $p \geq 1$) isomorphic to the Newtonian Sobolev space $N^{1,p}$ defined using upper gradients, cf. \cite{HKST}. This remains true in the setting of Theorem \ref{thm:main result} when $m=0$, cf. \cite[Proposition 3.11]{RW13}. Theorem \ref{thm:main result} gives a partial answer to the question whether the same result holds for $m \geq 1$. 

The classical Wolfe's theorem is an easy consequence of Theorem~\ref{thm:main result}. 

\begin{corollary}\label{cor:Wolfe's theorem}
For $1\leq m \leq n$ the space $W_d^{\infty,\infty}(\R^n,\bigwedge^m)$ is isometrically isomorphic to the dual space of the space $\flatchain_m$ of flat chains in $\R^n$.
\end{corollary}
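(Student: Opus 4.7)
The plan is to reduce the corollary to the main theorem in two steps: first identify $\flatchain_m^*$ with the Sobolev cochain space $W_{\infty,\infty}(\poly{m})$, then identify this space with $W_d^{\infty,\infty}(\R^n,\bigwedge^m)$.

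For the first identification, I would observe that if $X \in \flatchain_m^*$ with flat norm $C$, then choosing the trivial decomposition $R=T$, $S=0$ in the infimum defining $|T|_\flat$ yields $|X(T)| \leq C\mass(T)$, so the constant function $C$ is an $L^\infty$ upper norm; choosing $R=0$ and letting $S$ range over $(m+1)$-chains with $\partial S = T$ yields $|X(T)| \leq C\mass(S)$, so the same constant is an $L^\infty$ upper gradient. Hence $\|X\|_{\infty,\infty} \leq \|X\|_\flat$. Conversely, if $X$ has $L^\infty$ upper norm $h$ and $L^\infty$ upper gradient $g$ with $\max(\|h\|_\infty,\|g\|_\infty) \leq C$, subadditivity gives, for any $T = R + \partial S$,
$$|X(T)| \leq |X(R)| + |X(\partial S)| \leq \|h\|_\infty\mass(R) + \|g\|_\infty\mass(S) \leq C(\mass(R)+\mass(S)),$$
and taking the infimum over decompositions yields $|X(T)| \leq C|T|_\flat$. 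Taking infima over the respective upper norms and upper gradients gives the matching isometry $\flatchain_m^* \cong W_{\infty,\infty}(\poly{m})$.

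For the second identification, the forward map $\omega \mapsto X_\omega$ defined by $X_\omega(T) = \int_T \omega$ is norm-decreasing: the pointwise comass bound makes $\|\omega\|_\infty$ an $L^\infty$ upper norm, and Stokes' theorem on polyhedra shows $\|d\omega\|_\infty$ is an $L^\infty$ upper gradient. The reverse direction is where Theorem~\ref{thm:main result} is invoked. Given $X \in W_{\infty,\infty}(\poly{m})$ with $\|X\|_{\infty,\infty} \leq C$, I would localize: on chains supported in a ball $B_R$, the constants $C\chi_{B_{R+1}}$ serve as $L^q$ and $L^p$ upper norm and upper gradient for every finite pair $(q,p)$ admissible in Theorem~\ref{thm:main result}. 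Running the theorem's construction (or an appropriate localized version) yields a Sobolev form $\omega_R$ representing $X$ on such chains, and uniqueness of the representing form forces the $\omega_R$ to agree on overlaps, producing a global form $\omega \in W_d^{q,p}_{\loc}$. The pointwise $L^\infty$ bounds $\|\omega\|_\infty \leq C$ and $\|d\omega\|_\infty \leq C$ would then follow from the construction of $\omega$ inside the proof of Theorem~\ref{thm:main result}, which recovers $\omega(x)$ as a limit of normalized values $X(T_\epsilon)/\mass(T_\epsilon)$ on shrinking simplices at $x$ (and $d\omega$ via the analogous limit applied to $X \circ \bdry$).

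The main obstacle is precisely this last step: Theorem~\ref{thm:main result} is stated for finite $q,p$ on all of $\R^n$, whereas a flat cochain does not lie in its domain, since the only natural majorants are constants and these are not globally $L^p$-integrable. The localize-and-glue argument must therefore be set up so that uniqueness of the representing form across overlapping balls propagates, and the pointwise majorization of $\omega$ and $d\omega$ by the essential sup of the upper norm and upper gradient of $X$ must be read off the construction in the proof of the main theorem. Once this is in place, the two isometric identifications compose to give the stated isomorphism $W_d^{\infty,\infty}(\R^n,\bigwedge^m) \cong \flatchain_m^*$.
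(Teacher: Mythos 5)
Your first identification, $\flatchain_m^* \cong W_{\infty,\infty}(\poly{m})$, is correct and essentially reproduces the paper's Lemma~\ref{lem:flat cochains}. (The paper runs its estimate through the averaged cochains $X_r$ and then invokes Theorem~\ref{thm:smoothening} with $q=p=\infty$, whereas you argue directly via subadditivity; both work, since for an $L^\infty$ upper norm all values $|X(T)|$ are finite, so the subadditivity step is legitimate.)

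The second identification is where the proposal has a genuine gap, and you have in fact put your finger on it yourself. Your plan is to restrict $X$ to chains in a ball $B_R$, replace the constant majorants by $C\chi_{B_{R+1}}$ to obtain $L^q$ and $L^p$ upper norm and upper gradient, and then run Theorem~\ref{thm:main result} for finite exponents. But $C\chi_{B_{R+1}}$ is \emph{not} an upper norm or upper gradient of $X$ in the sense of Definition~2.3: those inequalities must hold for every $T\in\poly{m}$ and every $S\in\poly{m+1}$, not merely for chains supported in $B_R$. There is no obvious way to replace $X$ by a ``localized cochain'' $X_R\in W_{q,p}(\poly m)$ that agrees with $X$ on chains in $B_R$: restricting a polyhedral chain to a ball is not a polyhedral operation, projecting onto a ball is not affine, and simply declaring the cochain to be zero outside $B_R$ destroys additivity. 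So the hypotheses of Theorem~\ref{thm:main result} are never verified for an object in the space where the theorem applies, and the localize-and-glue step as written does not get off the ground.

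The paper avoids the issue entirely by noting (in the remark following the proof of Theorem~\ref{thm:main result}) that its proof ``applies word by word in the case $q=p=\infty$'': the construction of $\Psi_m$ is extended to $q=p=\infty$ in Section~\ref{sec:construction Psi case infinity} via cutoffs of the \emph{form} (not of the cochain), the construction of $\Phi_m$ in Section~\ref{sec:cochain} already allows $p,q=\infty$, and Theorem~\ref{thm:smoothening} is stated for $1<p,q\le\infty$. So the isomorphism $W_d^{\infty,\infty}\cong W_{\infty,\infty}(\poly{m})$ is established directly at the level of $(\infty,\infty)$, not by reduction to finite exponents. This is the clean way to close the gap you identified; trying to reduce to finite $(q,p)$ by localizing the \emph{cochain} runs into the structural obstruction above.
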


Here, $W_d^{\infty,\infty}(\R^n,\bigwedge^m)$ is endowed with the norm $\|\omega\|_{\infty,\infty}$ which is defined similarly to $\|\omega\|_{q,p}$ but using the essential supremum of the pointwise comass norms.

As mentioned above, Heinonen-Sullivan \cite{HS02} and Heinonen-Keith \cite{HKe11} applied Wolfe's theorem in order to give conditions under which a metric space is locally bi-Lipschitz equivalent to $\R^n$. Finding similar conditions for quasiconformal equivalence is an interesting open problem. To attack this problem, it is desirable to find generalizations of Wolfe's theorem for Sobolev forms. Theorem \ref{thm:main result} was partially motivated by this application.

We do not know whether Theorem \ref{thm:main result} holds for all values of $q$ and $p$. However, we have an unconditional result for cochains on $\poly{m}^0$, the space of polyhedral $m$-chains without boundary. In order to state our result, define the norm of a cochain $X$ on $\poly{m}^0$ with upper gradient in $L^p(\R^n)$ by $\| X\|_p=\inf \| g\|_p$, where the infimum is taken with respect to upper gradients $g$ of $X$. Let $W_p(\poly{m}^0)$ be the set of equivalence classes of additive cochains on $\poly{m}^0$ with  upper gradient in $L^p(\R^n)$, under the equivalence relation defined by $X_1\sim X_2$ if $\|X_1-X_2\|_p=0$. Denote furthermore by $\overline{W}_d^{p}(\R^n, \bigwedge^m)$ the quotient space of the space of $m$-forms on $\R^n$ with coefficients in $L^1_{\loc}(\R^n)$ and coefficients of the distributional exterior derivative in $L^p(\R^n)$ by the subspace of those elements $\omega$ with $d\omega = 0$. For $[\omega]\in \overline{W}_d^{p}(\R^n, \bigwedge^m)$ define $\|[\omega]\|_p:= \|d\omega\|_p$, where $\|\cdot\|_p$ denotes the $L^p$-norm of the pointwise comass norm. This defines a norm on $\overline{W}_d^{p}(\R^n, \bigwedge^m)$ which is bounded by the quotient norm. Our result can now be stated as follows.

\begin{theorem}\label{thm:main-result-closed}
For $1\leq m \leq n-1$ and $1<p<\infty$ the space $\overline{W}_d^{p}(\R^n, \bigwedge^m)$ is isometrically isomorphic to $W_p(\poly{m}^0)$.
\end{theorem}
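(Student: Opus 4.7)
The strategy is to construct explicit maps in both directions, show that both preserve the relevant norms, and that they are inverse to each other modulo closed forms.

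\textbf{Forward map.} For a smooth, compactly supported $m$-form $\omega$, set $X_\omega(T) := \int_T \omega$ for $T\in\poly{m}^0$. Since $\R^n$ is contractible every cycle $T\in\poly{m}^0$ bounds some $S\in\poly{m+1}$, and Stokes' theorem gives $X_\omega(T)=\int_S d\omega$. Thus the pointwise comass $|d\omega|_c$ is an upper gradient of $X_\omega$, so $\|X_\omega\|_p\le\|d\omega\|_p$. For general $[\omega]\in\overline W_d^p(\R^n,\bigwedge^m)$ I would approximate $\omega$ by smooth forms $\omega_j$ with $d\omega_j\to d\omega$ in $L^p$; the bound $\|X_{\omega_j}-X_{\omega_k}\|_p\le\|d\omega_j-d\omega_k\|_p$ makes $(X_{\omega_j})$ Cauchy in $W_p(\poly{m}^0)$, and the limit $X_\omega$ depends only on $d\omega$, hence on $[\omega]$, and satisfies $\|X_\omega\|_p\le\|d\omega\|_p$.

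\textbf{Backward map via differentiation.} Conversely, given an additive cochain $X$ on $\poly{m}^0$ with upper gradient $g\in L^p$, I would build a form $\omega$ with $|d\omega|_c\le g$ a.e. For each $x\in\R^n$ and each oriented affine $(m+1)$-plane $\pi$ through $x$, let $Q_r(x,\pi)$ be the cube of side $r$, centered at $x$, lying in $\pi$, and define
$$\eta(x)(\vec\pi) := \limsup_{r\to 0}\frac{X(\partial Q_r(x,\pi))}{r^{m+1}}.$$
The upper gradient inequality yields $|X(\partial Q_r(x,\pi))|\le\int_{Q_r}g\,d\haus^{m+1}$; combining the Lebesgue differentiation theorem inside each $(m+1)$-plane with integration over the Grassmannian, I would show that for a.e. $x$ the limsup is a genuine limit, is alternating and multilinear in $\vec\pi$, and satisfies $|\eta(x)|_c\le g(x)$. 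This produces a measurable $(m+1)$-form $\eta$ with $\|\eta\|_p\le\|g\|_p$.

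\textbf{Closedness, Poincar\'e lemma, compatibility.} Since $X$ is additive on cycles and $\partial\partial=0$, a subdivision argument applied to small $(m+2)$-cubes shows that $\eta$ is weakly closed. By the Poincar\'e lemma for $L^p$-forms on $\R^n$ (through the standard Cartan homotopy operator in the Iwaniec--Lutoborski form), there is a form $\omega$ with $L^1_{\loc}$ coefficients such that $d\omega=\eta$ distributionally. It remains to check $X=X_\omega$ on $\poly{m}^0$: for a given $T=\partial S$ I would subdivide $S$ into a fine cubical grid $\{Q_i\}$ and telescope $X(T)=\sum_i X(\partial Q_i)\approx\sum_i \eta(x_i)(\vec Q_i)|Q_i|\to\int_S\eta$, controlling the errors by $g$ via the upper gradient inequality and the $L^p$-boundedness of the Hardy--Littlewood maximal operator. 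Combined with the forward inequality this gives the isometry $\|X_\omega\|_p=\|d\omega\|_p$.

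\textbf{Main obstacle.} The delicate step is the differentiation together with the identity $X=X_\omega$. Existence of the limit defining $\eta$, the comass bound $|\eta|_c\le g$ a.e., and the passage from the discrete sums $\sum_iX(\partial Q_i)$ to $\int_S\eta$ all rest on maximal-function estimates and therefore on the hypothesis $p>1$. The dimensional restriction $1\le m\le n-1$ is needed so that $(m+1)$-forms are non-trivial and so that the family of transverse $(m+1)$-cubes used to differentiate $X$ is meaningful.
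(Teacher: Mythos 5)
Your two maps (integration on cycles for the forward direction; differentiation of $X(\partial Q_r)/r^{m+1}$ followed by the Poincar\'e lemma for the backward direction) are the right ingredients, and the overall plan is coherent. But the architecture differs from the paper's, and the parts you flag as ``delicate'' are not minor technicalities --- they are precisely where the paper spends most of its effort, and your sketch does not supply substitutes.

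The paper does not differentiate the $m$-cochain on $\poly{m}^0$ directly. Instead it passes to the coboundary: the map $X\mapsto \varrho(X)$, $\varrho(X)(S)=X(\partial S)$, identifies $W_p(\poly{m}^0)$ with the closed $(m+1)$-cochains in $W_{p,s}(\poly{m+1})$, and because the new ``upper gradient'' exponent $s$ can be taken arbitrarily large (any $s>n-m+1$ works, since the zero function is an upper gradient of $\varrho(X)$), the pair $(p,s)$ automatically lies in the exponent range where the machinery behind Theorem~\ref{thm:main result} applies unconditionally. The differentiation is then done by $\Phi_{m+1}$ of Section~\ref{sec:cochain}, which (a) restricts to \emph{coordinate} $(m+1)$-planes only, (b) uses a Hahn--Banach extension plus Riesz representation to obtain an honest $L^p$ density on each coordinate plane \emph{before} any Lebesgue differentiation, and (c) establishes $\Psi_{m+1}\circ\Phi_{m+1}=\id$ via the deformation theorem and the smoothing/averaging Theorem~\ref{thm:smoothening}. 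The final identification $U\cong \overline{W}_d^p(\R^n,\bigwedge^m)$ is exactly your Poincar\'e lemma step and matches.

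The genuine gaps in your version are the following. First, you define $\eta(x)(\vec\pi)$ as a $\limsup$ over a continuum of affine planes, and then assert that it is a.e.\ a genuine limit, alternating, multilinear, and bounded by $g(x)$ in comass; none of this comes for free. The set of $x$ where the limit exists depends a priori on $\vec\pi$, so passing to a comass bound for all simple $(m+1)$-vectors requires a countable density plus continuity argument, and before that the a.e.\ existence of the limit on a single plane already requires producing a measure (or an $L^p$ density) to which a differentiation theorem applies --- the paper's Hahn--Banach/Riesz step does exactly this and you have no replacement. Second, ``subdivide $S$ into a fine cubical grid and telescope $X(T)=\sum_i X(\partial Q_i)\to\int_S\eta$, controlling the errors by $g$ and the maximal function'' compresses essentially the entire Section~\ref{sec:continuity} of the paper: the passage from discrete sums of cochain values to the integral of the form requires the deformation theorem and the smoothening estimate $|X_r(T)-X(T)|\to0$, which in turn rests on the Riesz-potential estimate (Lemma~\ref{lem:Riesz potential}), the isoperimetric-type Proposition~\ref{prop:isoperimetric} (whose proof uses a Lipschitz-neighborhood-retract property of polyhedral cycles), and Proposition~\ref{prop:Riesz-maximal-fct-est}. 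Asserting that maximal-function bounds will handle it is an accurate diagnosis of where the difficulty lives, but it is not a proof. In short: your plan is a correct high-level blueprint, but it omits the two load-bearing lemmas, and it reconstructs from scratch machinery that the paper instead inherits from the already-proved Theorem~\ref{thm:main result} by the coboundary trick.
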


We briefly outline the proofs of Theorems \ref{thm:main result} and \ref{thm:main-result-closed}. A smooth compactly supported differential form naturally induces a Sobolev cochain by integration. Using this observation and approximation of Sobolev forms by smooth forms, we show that there exists a linear, norm-preserving map mapping the space of Sobolev forms to the space of Sobolev cochains with corresponding exponents. On the other hand, we construct a Sobolev form from a Sobolev cochain as follows: we restrict the cochain to the $m$-planes induced by coordinate vectors, and then use Lebesgue differentiation to construct the coefficients of the resulting form. The map defined this way is also linear and norm-preserving. 
To prove Theorem \ref{thm:main result}, we show that the two maps are actually inverses to each other. The main problem in showing this is that it is difficult to see why the restriction of a non-zero Sobolev cochain to the coordinate $m$-planes should be non-zero. In other words, why should a non-zero Sobolev cochain induce a non-zero Sobolev form? To overcome this problem, we ``smoothen" the cochains by applying averages. 
Given a cochain $X$ on $\poly{m}$ and $r>0$ we set
$$X_r(T):= \fint_{B(0,r)} X({\varphi_x}_\#T) dx$$
for every $T \in\poly m$, where $\varphi_x:\R^n\to\R^n$ is the translation map $\varphi_x(y)=x+y$.  Here, $\fint_E$ denotes the integral average $\lm^n(E)^{-1}\int_E$ and $\lm^n$ denotes Lebesgue measure.  The integrand is measurable and locally integrable under our assumptions, see Lemma~\ref{lem:mb-loc-int-transl}. Using the Federer-Fleming deformation theorem, we show that the cochains $X_r$ are determined by their action in the coordinate $m$-planes. Theorem \ref{thm:main result} follows if we can show that the cochain $X$ can be approximated by the 
cochains $X_r$. This is given by the following continuity result, which is of independent interest.

\begin{theorem}\label{thm:cont-intro}
Let $0\leq m\leq n$ and $1<p,q\leq\infty$. If $X \in W_{q,p}(\poly{m})$ then
$$| X_r(T)-X(T)|\to 0 \text{ as } r\to 0$$
for every $T\in \poly{m}\setminus \Lambda$ for some family $\Lambda \subset \poly{m}$ of zero $\nu$-modulus, where $\nu=q$ if $p> n-m$ and
$$\nu=\min \{q,pn/(n-p)\}$$
otherwise.
\end{theorem}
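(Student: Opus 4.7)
The plan is to control $X_r(T)-X(T)$ via a prism construction in $(m{+}1)$-chains and then identify the exceptional family through a Lebesgue-differentiation and modulus argument. For $x\in\R^n$ set $H_x(y,t)=y+tx$ and define the prisms
$$S_x := H_{x\#}(T\times\llbracket 0,1\rrbracket)\in\poly{m+1},\qquad P_x := H_{x\#}(\bdry T\times\llbracket 0,1\rrbracket)\in\poly{m}.$$
The product boundary formula gives $(\varphi_x)_\#T-T = (-1)^m(\bdry S_x - P_x)$, so by additivity of $X$ together with the defining upper-gradient and upper-norm inequalities,
$$|X((\varphi_x)_\#T)-X(T)| \le \int g\dd{S_x}+\int h\dd{P_x}.$$
Computing the prism mass measures directly gives $\int g\dd{S_x}\le |x|\int_0^1\int g(y+tx)\dd{T}(y)\,dt$ and the analogous bound for $P_x$ with $h,\bdry T$ replacing $g,T$. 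Averaging in $x$ over $B(0,r)$, applying Fubini, and substituting $u=tx$ then yields the core estimate
$$|X_r(T)-X(T)|\le r\int_0^1\!\int \fint_{B(y,tr)} g\,\dd{T}(y)\,dt+r\int_0^1\!\int \fint_{B(y,tr)} h\,\dd{\bdry T}(y)\,dt.$$

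The inner averages are dominated by the Hardy-Littlewood maximal functions, and by the maximal theorem $Mg\in L^p$ and $Mh\in L^q$. Lebesgue differentiation together with dominated convergence then gives $|X_r(T)-X(T)|\to 0$ whenever $\int Mg\dd{T}<\infty$ and $\int Mh\dd{\bdry T}<\infty$. It remains to show that the set $\Lambda\subset\poly m$ on which either integral is infinite has zero $\nu$-modulus. The upper-norm term contributes a zero-$q$-modulus family through a Fuglede-type argument applied to $Mh\in L^q$. For the upper-gradient term, the regime $p>n-m$ sits in a Sobolev-Morrey range in which the $L^p$ control on $g$ can be converted into an $L^q$-type upper norm bound on boundary chains, absorbing this part into the $q$-modulus; when $p\le n-m$, a Sobolev embedding argument at the level of $(m{+}1)$-chains produces instead zero $p^*$-modulus with $p^*=pn/(n-p)$. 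Intersecting these constraints yields the announced exponent $\nu=\min\{q,p^*\}$.

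The main difficulty lies in this final modulus step. The boundary prism $P_x$ is $m$-dimensional but its mass is driven by the $(m{-}1)$-chain $\bdry T$, so controlling the $S_x$-contribution (weighted against $\dd T$) and the $P_x$-contribution (weighted against $\dd{\bdry T}$) simultaneously requires matching objects of different dimensions against both $L^p$ and $L^q$ norms, and it is precisely this dimensional mismatch that produces the Sobolev exponent $pn/(n-p)$ in the final answer.
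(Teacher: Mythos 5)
Your prism decomposition and the core averaged estimate are correct and coincide (up to signs and a change of variables) with the paper's Lemma~\ref{lem:Riesz potential}; indeed $r\int_0^1 \fint_{B(y,tr)}g\,dt$ is, after Fubini, exactly the truncated Riesz potential $I_r(g)(y)$ up to constants. But the step where you pass to maximal functions is where the proof falls apart, in two distinct ways. First, for the upper-gradient term: replacing $\fint_{B(y,tr)}g$ by $Mg(y)$ and invoking the maximal theorem gives $Mg\in L^p$, hence a Fuglede argument yields a zero \emph{$p$}-modulus exceptional family. But when $p\le n-m$ the theorem asserts zero $\nu$-modulus with $\nu = \min\{q, pn/(n-p)\}$, and $pn/(n-p) > p$. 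Since zero modulus is \emph{not} preserved when the exponent goes up, your bound is strictly weaker than what is needed. The paper avoids this loss precisely by \emph{not} reducing to the maximal function here: it keeps the Riesz potential $I_r(g)$ intact and uses that $I_1$ maps $L^p$ into $L^{pn/(n-p)}$, which is what produces the Sobolev exponent in $\nu$. Your remark that "a Sobolev embedding argument at the level of $(m{+}1)$-chains produces instead zero $p^*$-modulus" gestures at this, but you discarded the very object (the Riesz potential) that makes the embedding usable once you bounded everything by $Mg$.

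Second, and more seriously, for the upper-norm term your exceptional family is $\{T: \int Mh\,d\|\bdry T\|=\infty\}$, which is defined through integration against $\|\bdry T\|$. The $q$-modulus of a family in $\poly m$, however, is defined via admissible functions tested against $\|T\|$, not $\|\bdry T\|$, so Fuglede applied to $Mh\in L^q$ does not directly control this family. This is exactly the "dimensional mismatch" you flag at the end, but flagging it is not solving it. The paper bridges this gap with Proposition~\ref{prop:Riesz-maximal-fct-est}, a consequence of the isoperimetric-type Proposition~\ref{prop:isoperimetric}: for any filling $S$ of a polyhedral cycle $R$, $\int I_r(h)\,d\|R\| \le C\int_{N(R,r)}Mh\,d\|S\|$. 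Applying this with $R=\bdry T$ and $S=T$ converts the boundary term back into an integral of $Mh$ against $\|T\|$, so that the exceptional family $\{T:\int Mh\,d\|T\|=\infty\}$ genuinely has zero $q$-modulus. The proof of Proposition~\ref{prop:isoperimetric} (via Lipschitz neighborhood retracts of simplicial complexes, slicing, and the constancy theorem) is the technical heart of the section and has no counterpart in your proposal. Finally, your argument does not treat the case $p>n-m$ separately; there the paper invokes the quantitative estimates of Propositions~\ref{prop:averages-RW13} and~\ref{prop:cont-averages-RW13} for the $\partial V_x$ term so that no exceptional set arises from the gradient side and $\nu=q$ can be taken, which your $L^p$-maximal-function estimate cannot produce unless $p\ge q$.
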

Proving Theorem \ref{thm:main result} for all exponents $p$ and $q$ would require a stronger form of Theorem \ref{thm:cont-intro}. The modulus appearing in the statement measures the size of exceptional sets, see Section \ref{subsec:modulus} for the definition. 
A similar statement holds for cochains on $\poly{m}^0$, see Theorem~\ref{thm:smoothening}. This, together with the arguments above and considerations involving the so-called coboundary of a cochain, are used to prove Theorem \ref{thm:main-result-closed}. 

We finally mention that a different variant of Wolfe's theorem for Sobolev forms was given in \cite{GKS83}. In that paper, the authors provide a one-to-one correspondence between the forms in $W_d^{q,p}(\R^n,\bigwedge^m)$ and linear functionals on $\poly m$ which, together with their exterior derivatives, satisfy certain boundedness conditions with respect to a so-called $q$-mass and $p$-mass. We believe that the notion of Sobolev cochain used in the present paper is more natural than the one defined in \cite{GKS83}.

\bigskip

Our paper is structured as follows. In Section~\ref{sect:preliminaries}, we recall the definitions of Sobolev forms, polyhedral chains, and Sobolev cochains. In Section~\ref{sec:continuity}, which is the most substantial part of the paper, we prove the main continuity result, Theorem~\ref{thm:cont-intro}, and an analogous version for cochains on $\poly{m}^0$, see Theorem~\ref{thm:smoothening}. In Section~\ref{sec:form}, we construct a linear map from the space of Sobolev forms to the space of Sobolev cochains and show that this map preserves norms. 
In Section~\ref{sec:cochain}, we construct a continuous linear map from the space of Sobolev cochains to the space of Sobolev forms. Finally, Section~\ref{sec:proof-main-thm} contains the proofs of Theorems~\ref{thm:main result} and \ref{thm:main-result-closed}.


\section{Preliminaries}   \label{sect:preliminaries}
In this section we collect the definitions of the basic objects of the present paper.

\subsection{Sobolev differential forms in $\R^n$}
We recall the definition of $W_d^{q,p}(\R^n, \bigwedge^m)$, the space of (weak) Sobolev differential $m$-forms on $\R^n$. We refer to \cite{IL93} for details.
Let $1\leq m \leq n$ and set $$\Lambda(m,n):= \{ \alpha:\{ 1,\dots,m\} \to \{1,\dots , n\} \text{ strictly increasing}\}.$$ 
Let $\omega$ be an $m$-form on $\R^n$, given in Euclidean coordinates by $$\omega = \sum_{\alpha\in\Lambda(m,n)} \omega(\,\cdot\,,\alpha)\, dx^\alpha,$$ with locally integrable coefficients $\omega(\,\cdot\,, \alpha)$. An $(m+1)$-form $d\omega$ on $\R^n$, given in Euclidean coordinates by
$$d\omega = \sum_{\beta\in\Lambda(m+1,n)} d\omega(\,\cdot\,,\beta)\, dx^\beta$$
and with locally integrable coefficients $d\omega(\,\cdot\,,\beta)$, is said to be the distributional exterior derivative of $\omega$, if 
$$
\int_{\R^n} d\omega \wedge \nu = (-1)^{m+1} \int_{\R^n} \omega \wedge d\nu
$$
for every $C^{\infty}$-smooth compactly supported $(n-m-1)$-form $\nu$.
Given $1\leq p,q\leq\infty$, the space $W_d^{q,p}(\R^n,\bigwedge^m)$ consists of (equivalence classes of) $m$-forms $\omega$ on $\R^n$ with coefficients $\omega(\,\cdot\,, \alpha)$ in $L^q(\R^n)$ and such that $\omega$ has a distributional exterior derivative $d\omega$ with coefficients $d\omega(\,\cdot\,, \beta)$ in $L^p(\R^n)$. 

We endow $W_d^{q,p}(\R^n, \bigwedge^m)$ with the following norm, which is different but equivalent to the norm considered in \cite{IL93} and \cite{RW13}.
For this, denote by $|\cdot|$ the norm on the space $\bigwedge_m\R^n$ of $m$-vectors associated with the  inner product for which $\{e_{\alpha(1)}\wedge\dots\wedge e_{\alpha(m)}: \alpha\in\Lambda(m,n)\}$ is an orthonormal basis. Here and throughout the text, $e_j$ denotes the $j$-th standard unit vector in $\R^n$. An $m$-vector $\xi\in\bigwedge_m\R^n$ is called simple if it can be written in the form $\xi=\xi_1\wedge\dots\wedge \xi_m$ for vectors $\xi_i\in\R^n$, $i=1,\dots,m$. The comass of an $m$-covector $\nu\in\bigwedge^m\R^n$ is defined by $$\|\nu\|= \sup\{\langle \nu, \xi\rangle: \text{$\xi\in\bigwedge\nolimits_m\R^n$ simple, $|\xi|\leq 1$}\},$$
where $\langle \cdot,\cdot\rangle$ denotes the natural pairing of $m$-covectors and $m$-vectors. 
Given an $m$-form $\omega$ on $\R^n$, with coefficients in $L^q(\R^n)$, we set
$$\|\omega\|_q:= \left(\int \|\omega(x)\|^qdx\right)^\frac{1}{q}$$
if $q<\infty$, where $\|\omega(x)\|$ denotes the comass of $\omega(x)$. We define $\|\omega\|_q$ analogously in case $q=\infty$. The Sobolev norm of an element $\omega\in W_d^{q,p}(\R^n,\bigwedge^m)$ is then defined by
$$\|\omega\|_{q,p}= \max\left\{\|\omega\|_q,\|d\omega\|_p\right\}.$$

Finally, we denote by $W_{d,\loc}^{1,p}(\R^n, \bigwedge^m)$ the space of $m$-forms on $\R^n$ with coefficients in $L^1_{\loc}(\R^n)$ and coefficients of the distributional exterior derivative in $L^p(\R^n)$. The space $\overline{W}_d^{p}(\R^n, \bigwedge^m)$ is the quotient of $W_{d,\loc}^{1,p}(\R^n, \bigwedge^m)$ by the subspace of those elements $\omega$ with $d\omega = 0$. The norm of an element $[\omega]\in \overline{W}_d^{p}(\R^n, \bigwedge^m)$ is defined by
$$\|[\omega]\|_p:= \|d\omega\|_p.$$
This clearly defines a norm on $\overline{W}_d^{p}(\R^n, \bigwedge^m)$. Note that if $m=0$ and $f$ is a Sobolev function then $\|[f]\|_p = \| \,|\nabla f|\,\|_p$.

\subsection{Polyhedral chains in $\R^n$} \label{subsec:polyhedral chains}
We recall the basic definitions related to polyhedral chains. We refer to \cite{Whi57} and \cite{Hei05} for further details. 
Let $0\leq m\leq n$. Formally, a polyhedral $m$-chain $T$ in $\R^n$ is a formal finite  sum
\begin{equation}
\label{pohe}
T=\sum_{i=1}^N a_i T_i, 
\end{equation}
where $a_i\in\R$ and $T_i$ is an oriented $m$-dimensional polyhedron in $\R^n$ in case $m\geq 1$ and $T_i$ is a point in $\R^n$ in case $m=0$. More precisely, consider the additive group of formal finite sums (with coefficients in $\R$) of compact, convex, oriented $m$-dimensional polyhedra (respectively, points if $m=0$). Then, quotient by the equivalence relation identifying $-T$ with $\tilde T$, where $\tilde T$ is $T$ with the opposite orientation, and identifying $T$ with $T_1+T_2$ if $T$ is formed by gluing $T_1$ and $T_2$ along a face with the correct orientation. The quotient group is the set of \textit{polyhedral $m$-chains} in $\R^n$, which we denote by $\poly{m}(\R^n)$ or $\poly{m}$ for short.

It is easily seen that for every $T\in\poly{m}$ there exists a representation \eqref{pohe} such that the $T_i$ have non-overlapping interiors. We associate to a polyhedral $m$-chain $T\in\poly{m}$ a finite measure, denoted by $\|T\|$ and defined by
$$\|T\|:=\sum_{i=1}^N |a_i| {\haus^m}\rstr{T_i},$$
where $\sum a_i T_i$ is a non-overlapping representation of $T$. Here, $\haus^m$ denotes the $m$-dimensional Haudorff measure. The number $\| T\|(\R^n)$ is called the \textit{mass} of $T$ and denoted by $\mass(T)$.
It is worth mentioning that a polyhedral $m$-chain $T$ gives rise to an $m$-dimensional normal current in $\R^n$ by integrating smooth compactly supported $m$-forms over $T$, see \cite{Fed69}, and thus also to an $m$-dimensional metric current in the sense of \cite{AK00}. 

The \textit{boundary} $\bdry T$ of $T\in\poly{m}$, $1\leq m\leq n$, is a polyhedral $(m-1)$-chain defined in the usual way, namely the boundary of a polygon is the sum of its faces with the induced orientations. If $T=\sum a_i T_i$ is a polyhedral $0$-chain then we write $\bdry T=0$ if and only if $\sum a_i=0$. Note that we have $\bdry\bdry T=0$ for every $T\in\poly{m}$ with $m\geq 2$. Denote by $\poly{m}^0$ the set of polyhedral $m$-chains $T\in\poly m$ with $\bdry T=0$, and $\poly{m}^+$ the set of polyhedral $m$-chains $T$ such that for the non-overlapping representation $T=\sum a_i T_i$ mentioned above, each polyhedron $T_i$ is parallel to one of the $m$-dimensional coordinate planes. Note that if $T\in\poly{m}^+$ then, in general, $\bdry T$ need not be in $\poly{m-1}^+$.

If $T=\sum_{i=1}^N a_i T_i$ is a polyhedral $m$-chain and $\varphi$ is an affine map, then $\varphi_{\#}T$ is the polyhedral $m$-chain defined by $$\varphi_{\#}T = \sum_{i=1}^N a_i \varphi(T_i)$$ and is called the push-forward of $T$ by the map $\varphi$. If $x \in \R^n$, we use the notation $\varphi_x$ for the translation map $y \mapsto x+y$. The push-forward 
${\varphi_{x}}_\#T$ is thus simply the translate of $T$.

\subsection{Sobolev cochains in $\R^n$}\label{sec:Sobolev-cochains}
We recall the basic definitions from the theory of weakly differentiable cochains initiated by the second and third author in \cite{RW13}. Whereas the definitions in \cite{RW13} are given for arbitrary complete metric spaces $X$ and metric normal or integral currents in $X$ in the sense of Ambrosio-Kirchheim \cite{AK00} we will restrict ourselves to the setting of $\R^n$ and the spaces $\poly{m}$ and $\poly{m}^0$ of polyhedral chains in $\R^n$ in this paper. We will therefore only give the relevant definitions in this setting.


\begin{defn}\label{def:cochain}
Let $0\leq m\leq n$. A function $X:\poly{m}\to \overline\R$ is called \textit{a subadditive cochain on $\poly m$} if $X(0)=0$ and
$$|X(T)| \leq |X(T+R)| + |X(R)|$$
for all $T,R\in\poly m$. If furthermore
$$X(T+R)=X(T)+X(R)$$
whenever each term is finite, then $X$ is called \textit{an additive cochain}, or simply a cochain, on $\poly{m}$.
\end{defn}

Cochains on $\poly{m}^0$ are defined by simply replacing $\poly{m}$ by $\poly{m}^0$ everywhere in the definition above.

A large class of additive cochains comes from differential forms.

\begin{example}
Given a (smooth) differential $m$-form $\omega$ on $\R^n$, we can define a cochain $X^\omega:\poly{m}\to\overline\R$ by setting $X^\omega(T)=\int_T \omega$ for every $T\in\poly{m}$.
\end{example}

The following notions of upper norm and upper gradient of a subadditive cochain defined in \cite{RW13} are in analogy with the definition of upper gradient of a function.

\begin{defn}
Let $X$ be a subadditive cochain on $\poly{m}$. 
\begin{enumerate}
\item A Borel function $h:\R^n \to [0,\infty]$ is called \textit{upper norm of $X$} if
\begin{equation}   \label{eq:upper norm}
|X(T)|\leq \int_{\R^n} h \dd{T}
\end{equation}
for every $T\in \poly{m}$.
\item A Borel function $g:\R^n \to [0,\infty]$ is called \textit{upper gradient of $X$} if
\begin{equation}    \label{eq:upper gradient}
|X(\bdry S)|\leq \int_{\R^n} g \dd{S}
\end{equation}
for all $S\in\poly{m+1}$.
\end{enumerate}
\end{defn}

The upper norm and upper gradient of cochains on $\poly{m}^0$ are defined analogously. We note here that throughout this paper, when dealing with cochains on $\poly{m}^0$ we will only use upper gradients. 
In \cite{RW13}, the authors proved that upper gradients of $0$-cochains are exactly upper gradients of functions. 

For $1\leq p,q\leq\infty$ denote by $\cW_{q,p}(\poly{m})$ the set of subadditive $m$-cochains which have an upper norm in $L^q(\R^n)$ and an upper gradient in $L^p(\R^n)$. In \cite{RW13} the notation $\cW_{q,p}(\poly{m}, \poly{m+1})$ was used. We define the Sobolev norm of a subadditive cochain $X \in\cW_{q,p}(\poly{m})$ by $$\| X\|_{q,p}=\max\left\{\inf  \|h\|_q, \inf\| g\|_p\right\},$$ where the infima are taken with respect to upper norms $h$ and upper gradients $g$ of $X$, respectively. This norm is different from but equivalent to the norm introduced in \cite{RW13}. 

Given two (additive) cochains $X_1,X_2:\poly m\to \overline \R$ the cochain $X_1+X_2$ is defined by $(X_1+X_2)(T)=X_1(T)+X_2(T)$ if $|X_1(T)|+|X_2(T)| <\infty$, and $(X_1+X_2)(T)=\infty$ otherwise. The space $W_{q,p}(\poly{m})$ is then defined as the set of equivalence classes of additive cochains in $\cW_{q,p}(\poly{m})$ under the equivalence relation defined by $X_1\sim X_2$ if $\| X_1-X_2\|_{q,p}=0$. In \cite{RW13} the notation $W_{q,p}(\poly{m}, \poly{m+1})$ was used instead. It is clear that $W_{q,p}(\poly{m})$ is a vector space. Note that the classical space of flat cochains is isometrically isomorphic to $W_{\infty,\infty}(\poly{m})$, see Lemma \ref{lem:flat cochains}. 

For $1\leq p\leq \infty$ denote by $\cW_p(\poly{m}^0)$ the set of subadditive cochains on $\poly{m}^0$ which have an upper gradient in $L^p(\R^n)$. The Sobolev norm of an element $X \in\cW_p(\poly{m}^0)$ is defined by $$\| X\|_p=\inf\| g\|_p,$$ where the infimum is taken with respect to upper gradients $g$ of $X$. The  space $W_p(\poly{m}^0)$ is defined to be the set of equivalence classes of additive cochains in $\cW_p(\poly{m}^0)$ under the equivalence relation defined by $X_1\sim X_2$ if $\| X_1-X_2\|_p=0$.

The \textit{coboundary} $dX$ of a subadditive $m$-cochain $X$ is the subadditive $(m+1)$-cochain defined by $dX(S)=X(\bdry S)$ for all $S\in\poly{m+1}$. It follows from the definition that a Borel function is an upper gradient of $X$ if and only if it is an upper norm of $dX$. Note that for all $1\leq q,p\leq \infty$ and all $1\leq s\leq \infty$ the coboundary operator yields a linear map
$$d: W_{q,p}(\poly{m}) \to W_{p,s}(\poly{m+1}).$$

\subsection{Modulus and capacity for polyhedral chains} \label{subsec:modulus}
Let now $\Lambda \subset \poly{m}$ be a family of polyhedral $m$-chains and $1\leq p<\infty$. The \textit{$p$-modulus $M_p(\Lambda)$} is defined as $\inf \int_{\R^n} f^p d\lm^n$, where the infimum is taken over all non-negative Borel functions $f$ such that $\int_{\R^n} f \dd{T}\geq 1$ for every $T\in\Lambda$. The theory of $p$-modulus of general measures was initiated by Fuglede \cite{Fug57}. In Fuglede's definition, the $p$-modulus is defined for a family of measures in a metric measure space. The above definition of $p$-modulus is exactly the one of Fuglede for the family of measures $\{ \|T\| : T\in\Lambda\}$. Note that the $p$-modulus is an outer measure on the set of polyhedral $m$-chains $\poly m$. Given $\Lambda\subset \poly{m}^0$, the \textit{$p$-capacity $\capa_p(\Lambda)$} is defined by
$$\capa_p(\Lambda):=M_p(\Gamma),$$
where $\Gamma=\{ S\in\poly{m+1}: \bdry S=T \text{ for some } T\in\Lambda\}$. We define the \textit{$(q,p)$-capacity of a family $\Lambda \subset \poly{m}$} by
$$\capa_{q,p}(\Lambda):= \inf \left\{ \int f_1^q d\lm^n + \int f_2^p d\lm^n \right\},$$ where the infimum is taken over all non-negative Borel functions $f_1\in L^q(\R^n)$ and $f_2\in L^p(\R^n)$ satisfying $\int f_1 d\| R\| +\int f_2 d\| S\| \geq 1$ for every decomposition $R+\bdry S\in \Lambda$ with $R\in\poly m$ and $S\in \poly{m+1}$. Notice that $\capa_{q,p}(\Lambda)=0$ if and only if there exist $\Lambda_1\subset \poly{m}$ and $\Lambda_2 \subset \poly{m}^0$ with $M_q(\Lambda_1)=\capa_p(\Lambda_2)=0$ and such that if $T=R+\bdry S\in\Lambda$, then $R\in\Lambda_1$ or $\bdry S\in\Lambda_2$. This notion of $(q,p)$-capacity is adapted to the set of cochains $W_{q,p}(\poly{m})$ in the sense that $\|X\|_{q,p}=0$ if and only if $X(T)=0$ for every $T\in \poly{m}\setminus \Lambda$, where $\capa_{q,p}(\Lambda)=0$.

With the notion of modulus available, one defines weak versions of upper norms and upper gradients as follows. Given a cochain $X$ on $\poly m$ a Borel function $h:\R^n\to [0,\infty]$ is said to be a \textit{$q$-weak upper norm of $X$} if there exists $\Lambda\subset\poly{m}$ with $M_q(\Lambda)=0$ such that \eqref{eq:upper norm} holds for every $T\in \poly m \setminus \Lambda$. Similarly, a Borel function $g:\R^n \to [0,\infty]$ is said to be a \textit{$p$-weak upper gradient of $X$} if there exists $\Gamma\subset\poly{m+1}$ with $M_p(\Gamma)=0$ such that \eqref{eq:upper gradient} holds for every $S\in \poly{m+1} \setminus \Gamma$.

We will make frequent use of Fuglede's lemma \cite{Fug57} which, in our setting, reads as follows.

\begin{lemma}[Fuglede's lemma]   \label{lem:Fuglede}
Let $1\leq p<\infty$, and let $f$ be a Borel function. Moreover, let $(f_j)$ be a sequence of Borel functions converging to $f$ in $L^p(\R^n)$. Then there exist a subsequence $(f_{j_k})$ and $\Lambda\subset \poly{m}$ with $M_p(\Lambda)=0$ such that
$$\int_{\R^n} | f_{j_k}-f|\, d\| T\| \to 0$$
for every $T\in \poly{m} \setminus \Lambda$.
\end{lemma}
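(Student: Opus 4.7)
The plan is to adapt Fuglede's classical Borel--Cantelli argument to this polyhedral setting. First I would reduce to the case $f=0$ by setting $g_j := |f_j - f|$; then $g_j \to 0$ in $L^p(\R^n)$, and since $\int_{\R^n} g_j\, d\|T\| = \int_{\R^n} |f_j - f|\, d\|T\|$, it suffices to produce a subsequence $(g_{j_k})$ and a family $\Lambda \subset \poly{m}$ of zero $p$-modulus outside of which $\int_{\R^n} g_{j_k}\, d\|T\| \to 0$.

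Next, using $g_j \to 0$ in $L^p$, I would extract a fast subsequence with $\|g_{j_k}\|_p \leq 2^{-k}$ and introduce the exceptional families
$$\Lambda_k := \Bigl\{ T \in \poly{m} : \int_{\R^n} g_{j_k}\, d\|T\| > 2^{-k/2}\Bigr\}.$$
The rescaled function $2^{k/2} g_{j_k}$ is then admissible for $\Lambda_k$ in the definition of $M_p$ (for any $T \in \Lambda_k$ one has $\int_{\R^n} 2^{k/2} g_{j_k}\, d\|T\| > 1$), so
$$M_p(\Lambda_k) \leq \int_{\R^n} \bigl(2^{k/2} g_{j_k}\bigr)^p d\lm^n = 2^{kp/2}\,\|g_{j_k}\|_p^p \leq 2^{-kp/2}.$$

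Finally, I would set $\Lambda := \bigcap_{N\geq 1}\bigcup_{k\geq N} \Lambda_k$. Because $M_p$ is an outer measure on $\poly{m}$, countable subadditivity yields $M_p(\Lambda) \leq \sum_{k \geq N} 2^{-kp/2}$ for every $N$, and letting $N \to \infty$ shows $M_p(\Lambda) = 0$. For any $T \in \poly{m} \setminus \Lambda$ there exists some $N$ with $T \notin \Lambda_k$ for all $k \geq N$, which directly gives $\int_{\R^n} g_{j_k}\, d\|T\| \leq 2^{-k/2}$ for all such $k$, and hence convergence to $0$. The only structural ingredient beyond the Borel--Cantelli calculation is countable subadditivity of $M_p$, which is already encoded in the statement that $M_p$ is an outer measure on $\poly m$; I do not anticipate any serious obstacle here.
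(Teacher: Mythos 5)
Your argument is correct and is precisely the classical Borel--Cantelli proof from Fuglede's original paper; the paper itself does not reprove the lemma but simply cites \cite{Fug57}, and your write-up reconstructs that standard proof (reduction to $f=0$, a geometrically fast subsequence, admissibility of the rescaled functions $2^{k/2}g_{j_k}$, and countable subadditivity of $M_p$) without any gaps.
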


As a consequence, one obtains that if $1<q,p<\infty$ then $L^q(\R^n)$-bounded sequences of upper norms converge, up to a subsequence, to $q$-weak upper norms, and similarly, $L^p(\R^n)$-bounded sequences of upper gradients converge, up to a subsequence, to $p$-weak upper gradients (see \cite{RW13} for details). In particular, the infimum in the definition of the Sobolev norm of a cochain is attained by some $q$-weak upper norm and some $p$-weak upper gradient.

We end this section with the following useful observation proved in \cite[Proposition 4.17]{RW13}.

\begin{lemma}  \label{lem:modulus translates}
Let $0\leq m\leq n$ and $T\in\poly{m}$ with $T\neq 0$. Let $B\subset \R^n$ be a Borel set with $\lm^n(B)>0$. Then the set $\Lambda:= \{ {\varphi_x}_\#T: x\in B \}$ has $M_q(\Lambda)>0$ for every $q\geq 1$.
\end{lemma}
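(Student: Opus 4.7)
The plan is to derive an explicit quantitative lower bound of the form
$$M_q(\Lambda) \;\geq\; \frac{\lm^n(B)}{\mass(T)^q},$$
from which the lemma is immediate: since $T\neq 0$, a non-overlapping representation of $T$ exhibits a summand of positive measure, so $\mass(T)>0$. The strategy is a one-line Fubini--H\"older argument exploiting the translation invariance of Lebesgue measure. First I would reduce to $0<\lm^n(B)<\infty$ by monotonicity of $q$-modulus: every function admissible for $\Lambda$ is admissible for $\Lambda_{B'}:=\{{\varphi_x}_\#T:x\in B'\}$ whenever $B'\subset B$, so $M_q(\Lambda_{B'})\leq M_q(\Lambda)$, and I may replace $B$ by a Borel subset of finite positive measure.

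Next, let $f:\R^n\to[0,\infty]$ be any Borel function admissible for $\Lambda$, i.e.\ $\int f\, d\|{\varphi_x}_\#T\|\geq 1$ for every $x\in B$. The translation law $\|{\varphi_x}_\#T\|(A)=\|T\|(A-x)$ gives $\int f\, d\|{\varphi_x}_\#T\|=\int f(x+z)\, d\|T\|(z)$. Integrating the admissibility inequality over $B$, applying Fubini (legal because the integrand is non-negative), and changing variables $y=x+z$ in the inner integral produce
$$\lm^n(B) \;\leq\; \int_{\R^n}\int_{B+z} f(y)\, dy\; d\|T\|(z).$$
H\"older's inequality applied to $f\cdot \chi_{B+z}$ yields $\int_{B+z} f\leq \|f\|_q\,\lm^n(B+z)^{1-1/q}=\|f\|_q\,\lm^n(B)^{1-1/q}$, and since $\|T\|(\R^n)=\mass(T)$, combining these estimates gives $\lm^n(B)^{1/q}\leq \mass(T)\,\|f\|_q$. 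Raising to the $q$-th power and taking the infimum over admissible $f$ yields the desired bound on $M_q(\Lambda)$.

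There is really no serious obstacle: the argument is direct and uniform in $q\in[1,\infty)$, with the $q=1$ case handled by reading H\"older as $\int_{B+z} f\leq \|f\|_1$. The only things to verify along the way are the translation identity for the mass measure $\|\cdot\|$ of a polyhedral chain (immediate from the non-overlapping representation and translation invariance of $\haus^m$), and that Fubini applies — both are routine. The same proof also covers $m=0$, where $\|T\|$ is a finite sum of weighted point masses.
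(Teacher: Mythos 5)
Your proof is correct, and the argument is clean. Note, however, that the paper itself does not supply a proof of this lemma: it simply cites \cite[Proposition 4.17]{RW13}, so there is no in-paper proof to compare against. Your Fubini--H\"older argument (integrate the admissibility inequality $\int f\,d\|{\varphi_x}_\#T\|\geq 1$ over $x\in B$, use the translation identity $\int f\,d\|{\varphi_x}_\#T\| = \int f(x+z)\,d\|T\|(z)$, swap the order of integration, and estimate the inner Lebesgue integral by H\"older) is exactly the natural route and is carried out without gaps. Two small points worth flagging for the record, neither of which affects correctness: (a) the displayed quantitative bound $M_q(\Lambda)\geq \lm^n(B)/\mass(T)^q$ is stated before the reduction to $\lm^n(B)<\infty$; it does in fact hold in the infinite-measure case too (by exhausting $B$ with subsets of large finite measure via the monotonicity of modulus), but this should be spelled out if the quantitative form is to be claimed for all $B$; (b) the division by $\lm^n(B)^{1-1/q}$ uses both $\lm^n(B)>0$ and $\lm^n(B)<\infty$, which is precisely where the reduction is needed, and this is the right place to invoke it. The proof also handles $m=0$ and $m=n$ uniformly, as you observe. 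In short: the argument is correct, elementary, and slightly sharper (giving an explicit lower bound) than what the lemma statement requires; whether it coincides with the proof in \cite{RW13} cannot be checked from the present paper, but it is certainly an adequate and self-contained replacement.
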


\subsection{Averages of cochains}

Let $0\leq m\leq n$ and $1\leq p,q\leq \infty$. Given an additive cochain $X\in \cW_{q,p}(\poly{m})$ and $r>0$, define an additive cochain $X_r:\poly m\to\overline\R$ by
$$X_r(T):= \fint_{B(0,r)} X({\varphi_x}_\#T) dx$$
for every $T \in\poly m$, where $\varphi_x:\R^n\to\R^n$ is the translation map $\varphi_x(y)=x+y$.  Here, $\fint_E$ denotes the integral average $\lm^n(E)^{-1}\int_E$ and $B(0,r)$ is the open ball of radius $r$ centered at $0$.
For additive $m$-cochains on integral or normal currents with $m\leq n-1$ the measurability and the local integrability of the function $x\mapsto X({\varphi_x}_\#T)$ was proved in \cite[Lemma 4.14(i)]{RW13}. We have the following analog for cochains on polyhedral chains.

\begin{lemma}\label{lem:mb-loc-int-transl}
 Let $0\leq m\leq n$ and $1\leq p,q\leq \infty$ and let $X\in \cW_{q,p}(\poly{m})$ be an additive cochain. Then for every $T\in\poly{m}$ the function $u:\R^n\to \overline{\R}$ given by $u(x):= X({\varphi_x}_\#T)$ is Lebesgue measurable and locally integrable.
\end{lemma}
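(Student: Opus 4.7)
My plan is first to reduce to the case where $T$ is a single oriented $m$-simplex. Writing $T=\sum_{j=1}^N a_jT_j$ as a non-overlapping sum of simplices (or points when $m=0$), the upper norm estimate gives $|X(\varphi_{x\#}T_j)| \le \int h(x+y)\dd{T_j}(y)$, which is finite for a.e.\ $x$ by Fubini since $h\in L^q\subset L^1_\loc$. Additivity of $X$ then yields $u(x)=\sum_j a_j X(\varphi_{x\#}T_j)$ for a.e.\ $x$, so the measurability and local integrability for a simplex transfer to the general case.

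For $T$ a simplex, I would approximate $u$ by step functions on a dyadic grid. Let $\tau_k\colon\R^n\to\R^n$ send each point to the center of the dyadic cube of side $2^{-k}$ containing it, and put $u_k(x):=X(\varphi_{\tau_k(x)\#}T)$; since $u_k$ is constant on every dyadic cube it is Borel measurable, and the aim is to show $u_{k_j}\to u$ pointwise a.e.\ along a subsequence. To bound the defect I introduce the $(m{+}1)$-dimensional prism $P_k(x)\in\poly{m+1}$ obtained by linearly interpolating the translation from $\varphi_{x\#}T$ to $\varphi_{\tau_k(x)\#}T$, with parametrization $(s,y)\in[0,1]\times T\mapsto (1-s)x+s\tau_k(x)+y$. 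Then $\bdry P_k(x)=\varphi_{\tau_k(x)\#}T-\varphi_{x\#}T$, and additivity of $X$ combined with the upper gradient hypothesis yields
\[
|u_k(x)-u(x)|\;\le\;v_k(x):=\int_{\R^n} g\,\dd{P_k(x)}
\]
whenever $u(x)$ and $v_k(x)$ are finite. The function $v_k$ is itself Lebesgue measurable in $x$ by Fubini applied to this parametrization, whose Jacobian is bounded by $|\tau_k(x)-x|\cdot|\vec T|\le 2^{-k}\sqrt n\,|\vec T|$.

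The core step is to show $\int_B v_k\,dx\to 0$ for every ball $B\subset\R^n$. On a single dyadic cube $Q\subset B$ with center $c_Q$, the change of variable $z=(1-s)x+s c_Q+y$ converts the inner integral into $|Q|\,\fint_{Q_{s,y}} g$, where $Q_{s,y}=(1-s)Q+sc_Q+y$ is a cube of diameter $(1-s)2^{-k}\sqrt n$ centered at $c_Q+y$. For $p>1$ the pointwise estimate $\fint_{Q_{s,y}}g\le C\,Mg(c_Q+y)$, summation over $Q\subset B$, and the $L^p$-boundedness of the Hardy--Littlewood maximal operator $M$ together give $\int_B v_k\,dx\le C\,2^{-k}\|g\|_{L^p(B')}$ for a slightly enlarged ball $B'$. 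Passing to a subsequence, $v_{k_j}\to 0$ a.e.\ on $B$, hence $u_{k_j}\to u$ a.e.\ on $B$, and since $B$ is arbitrary $u$ is Lebesgue measurable. Local integrability is then immediate from the upper norm and Fubini:
\[
\int_B |u(x)|\,dx\;\le\;\mass(T)\,\|h\|_{L^1(B+\spt T)}\;<\;\infty.
\]

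The main technical obstacle is this Fubini/maximal-function estimate: $\|P_k(x)\|$ is an $(m{+}1)$-dimensional Hausdorff measure while $g\in L^p(\R^n)$ is only defined up to an $\lm^n$-null set, and the change of variable produces a $(1-s)^{-n}$ Jacobian singularity at $s=1$ that must be absorbed by bounding averages by the Hardy--Littlewood maximal function. The endpoint $p=\infty$ is trivial from $v_k(x)\le\|g\|_\infty\,\mass(P_k(x))\le C\,2^{-k}\|g\|_\infty\,\mass(T)$, while $p=1$ requires a weak-type argument, or an approximation of $g$ by continuous functions in $L^1$ combined with Fuglede's lemma.
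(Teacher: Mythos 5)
Your approach is genuinely different from the paper's. The paper invokes the proof of [RW13, Lemma 4.14(i)] to produce a locally integrable Borel function $\bar\nu$ that is an upper gradient of $u$ along polygonal curves, and then a Haj\l asz-type telescoping argument gives the pointwise estimate $|u(b)-u(a)|\leq C|b-a|\bigl(M\bar\nu(a)+M\bar\nu(b)\bigr)$; thus $u$ is Lipschitz on the sublevel sets of $M\bar\nu$, hence Lebesgue measurable, with local integrability from the upper norm and Fubini. You instead discretize $x$ through a dyadic grid and aim to show $u_k\to u$ a.e.\ directly. That strategy is reasonable in principle, but as written it has two genuine gaps.

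\textbf{The boundary of the prism.} You assert $\bdry P_k(x)=\varphi_{\tau_k(x)\#}T-\varphi_{x\#}T$. With $\psi(s,y)=(1-s)x+s\tau_k(x)+y$ one actually has
$$\bdry P_k(x) = \varphi_{\tau_k(x)\#}T - \varphi_{x\#}T - Q_k(x),$$
where $Q_k(x)$ is the corresponding prism over $\bdry T$. For $m\geq 1$ a single simplex has $\bdry T\neq 0$, so this extra term is present; the reduction to simplices does not remove it. The cure is to estimate $|X(Q_k(x))|$ via the upper norm $h$ by the same device, giving $|u_k(x)-u(x)|\leq v_k(x)+w_k(x)$ with $w_k(x)=\int h\,d\|Q_k(x)\|$, and then control both terms. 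Repairable, but it must be stated.

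\textbf{The claimed bound $\int_B v_k\,dx\leq C\,2^{-k}\|g\|_{L^p(B')}$.} Your pointwise bound $\fint_{Q_{s,y}}g\leq C\,Mg(c_Q+y)$ is correct, but integrating over $s$ and over $y$ against $d\|T\|(y)$ and summing over $Q$ leads to
$$\int_B v_k\,dx\ \lesssim\ 2^{-k}\sum_{Q\subset B}|Q|\int_{\R^n}Mg(c_Q+y)\,\dd{T}(y).$$
Here $Mg$ is evaluated along translates of the $m$-dimensional set $\spt T$ by the \emph{specific} dyadic centers $c_Q$; since $\|T\|$ is singular and $Mg$ is merely an $L^p(\R^n)$ function, each integral $\int Mg(c_Q+\cdot)\,d\|T\|$ may equal $+\infty$ --- a.e.\ finiteness of $Mg$ with respect to Lebesgue measure says nothing about its restriction to an $m$-plane shifted by a particular $c_Q$. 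Moreover, even if these quantities were finite, passing from the Riemann-type sum $\sum_{Q}|Q|\,\Phi(c_Q)$ to the integral $\int_{B'}\Phi(x)\,dx$ with $\Phi(x)=\int Mg(x+y)\,d\|T\|(y)$ (which is what the $L^p$-boundedness of $M$ would ultimately control) is not valid for merely measurable $\Phi$. The paper's Lemma~\ref{lem:Riesz potential} avoids both difficulties: it keeps the translation parameter continuous, passes to polar coordinates, and lands on the truncated Riesz potential $I_r(g)$ integrated against $\|T\|$, under a single honest Fubini. Your dyadic discretization loses exactly that. Two possible fixes: randomize the grid offset $c\in[0,2^{-k})^n$, show the estimate for a.e.\ $c$, and choose a good offset; or abandon the grid and use the continuous Riesz-potential (or upper-gradient) route.

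Finally, the $p=1$ endpoint, which you yourself flag, is indeed outside the reach of the maximal-function argument; the paper's route handles $p=1$ with no extra work, since $\bar\nu$ need only be locally integrable. The local integrability step at the end of your argument is correct once measurability is established.
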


The same result holds when $X$ is an additive cochain in $\cW_p(\poly{m}^0)$ and $T\in\poly{m}^0$ and we can thus define $X_r$ in this case as well. It is not difficult to show that  $X_r\in \cW_{q,p}(\poly{m})$ for every $r>0$ and furthermore $X_r\in \cW_{\infty,\infty}(\poly{m})$. 

\begin{proof}
If $m=n$ then it is straight-forward to check that $u$ is continuous. We may therefore assume that $m\leq n-1$. In this case, the proof of \cite[Lemma 4.14(i)]{RW13} shows that there exists a non-negative Borel measurable and locally integrable function $\bar{\nu}$ which is an upper gradient of $u$ with respect to polygonal curves, that is, such that $$|u(b) - u(a)|\leq \int_0^1\bar{\nu}\circ\gamma(t)\,|\dot{\gamma}(t)|\,dt$$ for all $a,b\in\R^n$ and every polygonal curve $\gamma$ connecting $a$ and $b$. Here, it is understood that the right-hand side must equal $\infty$ in case $u(a) = \infty$ or $u(b)=\infty$. (We remark here that the proof of \cite[Lemma 4.14(i)]{RW13} is stated only for $p,q<\infty$. However, the same arguments apply in the case that $q=\infty$ or $p=\infty$.)
It now follows from a well-known argument (see e.g.~page 28 in \cite{Hei01}) that $$|u(b) - u(a)| \leq C|b-a|(M\bar{\nu}(b) + M\bar{\nu}(a))$$ for all $a,b\in\R^n$, where $M\bar{\nu}$ is the maximal function of $\bar{\nu}$, and where $C$ is a constant only depending on $n$. This implies that $u$ is Lipschitz continuous on $\{M\bar{\nu} \leq k\}$ for every $k\in\N$ and, since $\cap\{M\bar{\nu}\geq k\}$ is negligible, it follows that $u$ is Lebesgue measurable. Finally, the upper norm inequality, Fubini's theorem and (in case that $q<\infty$) H\"older's inequality yield that $u$ is locally integrable.
\end{proof}

In the proof of Theorem~\ref{thm:smoothening} we will need the following two crucial facts from \cite{RW13} about the averages $X_r(T)$. The statements given in \cite{RW13} are slightly stronger and are proved in the generality of normal and integral currents in Lie groups equipped with a left-invariant Finsler metric. In the setting of $\R^n$ the results can be stated in a somewhat simpler form. We thus provide them here for the convenience of the reader.
The following is a restatement of \cite[Proposition 4.15(ii)]{RW13}.

\begin{proposition}\label{prop:averages-RW13}
 Let $0\leq m\leq n-1$ and $1\leq p<\infty$. Let furthermore $X\in\cW_p(\poly{m}^0)$ be an additive cochain with upper gradient $g$ in $L^p(\R^n)$. Then there exists $C=C(n,m,p)>0$ such that for every $T\in\poly{m}^0$, every $S\in\poly{m+1}$ with $\bdry S=T$, and every $r>0$, we have
 $$|X_r(T)| \leq C r^{-n/p} \mass(S) \|g\|_p.$$
\end{proposition}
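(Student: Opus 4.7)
The plan is to reduce the estimate for $X_r(T)$ to a single application of the upper gradient inequality followed by Fubini and H\"older. The key observation is that translation commutes with boundary, so for every $x\in B(0,r)$, $\bdry({\varphi_x}_\#S) = {\varphi_x}_\#\bdry S = {\varphi_x}_\#T$. Applying the upper gradient inequality from the definition to ${\varphi_x}_\#S\in\poly{m+1}$ and recalling that $\|{\varphi_x}_\#S\|$ is the push-forward of $\|S\|$ under $\varphi_x$ gives
\[
|X({\varphi_x}_\#T)| \;\leq\; \int_{\R^n} g\, d\|{\varphi_x}_\#S\| \;=\; \int_{\R^n} g(x+y)\, d\|S\|(y).
\]

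Next I would integrate in $x$ over $B(0,r)$, use Lemma~\ref{lem:mb-loc-int-transl} to justify measurability of the integrand, and switch the order of integration by Fubini (the iterated integral of the majorant is finite since $g\in L^p$ and $\|S\|$ has compact support with finite mass). After the change of variables $z=x+y$ this yields
\[
|X_r(T)| \;\leq\; \fint_{B(0,r)}\!\!\int_{\R^n} g(x+y)\, d\|S\|(y)\, dx
\;=\; \int_{\R^n}\left(\fint_{B(y,r)} g(z)\, dz\right) d\|S\|(y).
\]

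Now I would apply H\"older's inequality to the inner average:
\[
\fint_{B(y,r)} g(z)\, dz \;\leq\; \left(\fint_{B(y,r)} g^p\right)^{1/p} \;\leq\; \bigl(\omega_n r^n\bigr)^{-1/p}\|g\|_p,
\]
where $\omega_n = \lm^n(B(0,1))$. Plugging this uniform bound in front of the integral and using $\int d\|S\| = \mass(S)$ gives the conclusion with $C = \omega_n^{-1/p}$, which depends only on $n$ (in fact not on $m$ or $p$ beyond this constant).

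I do not expect a serious obstacle here; the only subtleties are measurability of $x\mapsto X({\varphi_x}_\#T)$ (handled by Lemma~\ref{lem:mb-loc-int-transl}), and the fact that this function could in principle take the value $\infty$. The latter is harmless because the upper-gradient pointwise bound above shows that the integrand is majorized by an integrable function, so it is finite for $\lm^n$-a.e.\ $x\in B(0,r)$ and the average is well defined and satisfies $|X_r(T)|\leq \fint |X({\varphi_x}_\#T)|\,dx$.
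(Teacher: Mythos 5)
Your argument is correct, and it is the natural one: translate the filling $S$, apply the upper-gradient inequality to ${\varphi_x}_\#S$ (noting that $\bdry$ commutes with push-forward and that $\|{\varphi_x}_\#S\|$ is the push-forward of $\|S\|$), then Fubini, and finally H\"older applied to the inner average $\fint_{B(y,r)} g$, yielding $C=\lm^n(B(0,1))^{-1/p}$. The paper does not give its own proof here but simply restates \cite[Proposition 4.15(ii)]{RW13}; your self-contained argument is a valid proof of the restated estimate and addresses the only genuine subtlety (measurability and a.e.\ finiteness of $x\mapsto X({\varphi_x}_\#T)$) by invoking Lemma~\ref{lem:mb-loc-int-transl} and the dominating bound, exactly as one should.
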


In order to state the second proposition we
define the maximal growth of a polyhedral chain $T\in\poly{m}$ by
\begin{equation*}
 \Theta_m(T):= \sup\frac{\|T\|(B(x,r))}{r^m},
\end{equation*}
where the supremum is taken over all $x\in\R^n$ and all $r>0$.
Note that $\Theta_m(T)<\infty$ for every $T\in\poly{m}$. We now give a restatement of  \cite[Proposition 4.16]{RW13}. 

\begin{proposition}\label{prop:cont-averages-RW13}
Let $0\leq m\leq n-1$ and let $n-m+1<q<\infty$ and $n-m<p<\infty$.
\begin{enumerate} 
 \item Let $X\in\cW_p(\poly{m}^0)$ be an additive cochain with upper gradient $g$ in $L^p(\R^n)$. Then there exists a constant $D=D(p,m,n)>0$ such that for every $T\in\poly{m}^0$ and every $r>0$ we have 
 $$|X_r(T) - X(T)|\leq D\,  \Theta_m^{1/p}(T) r^{1+\frac{m-n}{p}} \mass(T)^{\frac{p-1}{p}} \| g\|_p.$$
 \item Let $X\in\cW_{q,p}(\poly{m})$ be an additive cochain with upper norm $h$ in $L^q(\R^n)$ and upper gradient $g$ in $L^p(\R^n)$. Then there exists $E=E(q,p,m,n)>0$ such that for every $T\in\poly{m}$ and every $r>0$ we have 
 \begin{equation*}
\begin{split}
 |X_r(T) - X(T)|\leq E & \left[  \Theta_m^{1/p}(T) r^{1+\frac{m-n}{p}} \mass(T)^{\frac{p-1}{p}} \| g\|_p \right. \\
 & + \left. \Theta_{m-1}^{1/q}(\bdry T) r^{1+\frac{m-1-n}{q}} \mass(\bdry T)^{\frac{q-1}{q}} \| h\|_q \right].
 \end{split}
 \end{equation*}
\end{enumerate}
\end{proposition}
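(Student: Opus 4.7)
The plan is to connect $T$ to its translate ${\varphi_x}_\# T$ by the straight-line prism $P_x$, defined as the image of $T \times [0,1]$ under $(y,t) \mapsto y + tx$; apply the upper gradient (respectively upper norm) inequality to control $|X({\varphi_x}_\# T) - X(T)|$; and then average over $x \in B(0,r)$ by Fubini and H\"older. Two facts drive the estimate: the parametrization $(y,t)\mapsto y+tx$ has Jacobian $|x^\perp|\leq|x|$, so the mass measure of $P_x$ is dominated by $|x|\,d\|T\|\otimes dt$; and the density bound $\|T\|(B(w,s))\leq\Theta_m(T)\,s^m$ supplies the dimensional content that brings out the $m$-dependent exponent in the final estimate.

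For part (i), $\bdry T=0$ yields $\bdry P_x = {\varphi_x}_\#T - T$, so the upper gradient inequality combined with the Jacobian bound gives
$$|X({\varphi_x}_\#T)-X(T)| \leq |x|\int_0^1\!\!\int g(y+tx)\,d\|T\|(y)\,dt.$$
Averaging over $B(0,r)$, swapping the order of integration and substituting $w=y+tx$ for fixed $y$ and $t>0$ rewrites the right-hand side as a constant multiple of $r^{1-n}\int_0^1 t^{-n}\!\int g(w)\|T\|(B(w,tr))\,dw\,dt$. The inner integral is controlled by H\"older as $\|g\|_p\cdot\bigl\|\|T\|(B(\cdot,tr))\bigr\|_{p'}$, and the $L^{p'}$-norm is in turn estimated by interpolating between the pointwise bound $\|T\|(B(w,tr))\leq\Theta_m(T)(tr)^m$ and the Fubini identity $\int\|T\|(B(w,tr))\,dw = c_n(tr)^n\mass(T)$. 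Collecting exponents produces exactly $\Theta_m(T)^{1/p}\mass(T)^{(p-1)/p}\,r^{1+(m-n)/p}\|g\|_p$, with the hypothesis $p>n-m$ ensuring convergence of the remaining integral $\int_0^1 t^{(m-n)/p}\,dt$.

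For part (ii), the homotopy formula for prisms reads $\bdry P_x = {\varphi_x}_\#T - T - P'_x$, where $P'_x$ is the analogous straight-line prism built over $\bdry T$. Additivity of $X$, together with the upper gradient inequality applied to $P_x$ and the upper norm inequality applied to $P'_x$, yields
$$|X({\varphi_x}_\#T)-X(T)| \leq \int g\,d\|P_x\| + \int h\,d\|P'_x\|.$$
The first summand is handled by the argument of (i). The second is handled by the very same argument applied to the $(m-1)$-dimensional chain $\bdry T$ in place of $T$, with $(g,p,\Theta_m(T))$ replaced by $(h,q,\Theta_{m-1}(\bdry T))$; the convergence of the corresponding $t$-integral now requires $q>n-m+1$. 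The main technical delicacy is the Jacobian inequality for $P_x$ when $x$ happens to be parallel to a face of some simplex of $T$: there the prism degenerates, but precisely because the Jacobian vanishes in that situation the mass inequality $d\|P_x\|\leq |x|\,d\|T\|\otimes dt$ is preserved, and the rest is a careful bookkeeping of H\"older, Fubini, and the $\Theta$-growth bound.
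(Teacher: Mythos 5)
Your proof follows essentially the same route as the paper (via \cite{RW13}): translate $T$ along the straight-line prism $\psi_{x\,\#}([0,1]\times T)$, use the Federer mass bound $\|P_x\|\leq |x|\,\psi_{x\,\#}(\lm^1\times\|T\|)$ (cf.\ the proof of Lemma~\ref{lem:Riesz potential}), apply the upper gradient / upper norm inequality and average. The Fubini--H\"older--interpolation step, where $\left\|\,\|T\|(B(\cdot,tr))\,\right\|_{p'}$ is estimated by interpolating between $\Theta_m(T)(tr)^m$ and $\omega_n(tr)^n\mass(T)$, correctly extracts the exponents $r^{1+(m-n)/p}\mass(T)^{(p-1)/p}\Theta_m(T)^{1/p}$ and identifies $p>n-m$ (respectively $q>n-m+1$) as the threshold for convergence of $\int_0^1 t^{(m-n)/p}\,dt$.

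One small technical point you gloss over: to pass from the bound on $|X({\varphi_x}_\#T-T)|$ to the claimed bound on $|X_r(T)-X(T)|$ you invoke additivity, which for a cochain only applies once all terms are finite. As in the paper's proof of Theorem~\ref{thm:smoothening}, one must first argue that $|X({\varphi_x}_\#T)|<\infty$ for a.e.\ $x$ (via Lemma~\ref{lem:modulus translates} and the upper norm / upper gradient inequality), combine with the finiteness of your integral bound via subadditivity to conclude $|X(T)|<\infty$, and only then split $X_r(T)-X(T)=\fint X({\varphi_x}_\#T-T)\,dx$. This is a routine patch, but the argument is incomplete without it.
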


The proof of this proposition is exactly as the proof of \cite[Proposition 4.16]{RW13} except that the reference to \cite[Lemma 4.14(i)]{RW13} therein should be replaced by a reference to Lemma~\ref{lem:mb-loc-int-transl} above.


\section{Continuity of averages}\label{sec:continuity}

The aim of this section is to prove Theorem~\ref{thm:smoothening} below, which provides the main continuity result needed in the proof of our generalizations of Wolfe's theorem. The second part of Theorem~\ref{thm:smoothening} was stated as Theorem~\ref{thm:cont-intro} in the introduction.

\begin{theorem}   \label{thm:smoothening}
Let $0\leq m\leq n$ and $1<p,q\leq \infty$. Then we have:
\begin{enumerate}
 \item if $X\in \cW_p(\poly{m}^0)$ is an additive cochain then
 $$| X_r(T)-X(T)|\to 0 \text{ as } r\to 0$$ for every $T\in\poly{m}^0\setminus \Lambda_1$, where $\Lambda_1\subset\poly{m}^0$ has $p$-capacity $0$. If $p>n-m$ then we may take $\Lambda_1=\emptyset$.
 \item if $X \in \cW_{q,p}(\poly{m})$ is an additive cochain then
$$| X_r(T)-X(T)|\to 0 \text{ as } r\to 0$$
for every $T\in \poly{m}\setminus \Lambda_2$ for some family $\Lambda_2 \subset \poly{m}$ of zero $\nu$-modulus, where $\nu=q$ if $p> n-m$ and
$$\nu=\min \{ q,pn/(n-p)\}$$
otherwise. If $q>n-m+1$ and $p> n - m$ then we may take $\Lambda_2 = \emptyset$.
\end{enumerate}
\end{theorem}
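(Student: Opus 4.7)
My strategy is to reduce the convergence to a pointwise estimate via a prism construction in $\R^n$, and then to control the exceptional sets through Fuglede-type modulus arguments using the Hardy--Littlewood maximal function.

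\textbf{Prism estimate.} For $T\in\poly{m}$ and $x\in\R^n$, consider the prism $U_x := [0,x]\times T \in \poly{m+1}$. A direct computation shows that $\bdry U_x = {\varphi_x}_{\#}T - T - [0,x]\times \bdry T$, i.e.,
\[
{\varphi_x}_{\#}T - T = \bdry U_x + [0,x]\times \bdry T.
\]
Applying $X$ additively to both sides, the upper gradient bound on $\bdry U_x$ and the upper norm bound on $[0,x]\times \bdry T$ give
\[
|X({\varphi_x}_{\#}T) - X(T)| \leq \int g\, d\|U_x\| + \int h\, d\|[0,x]\times \bdry T\|,
\]
the second term being absent in Part (1) since $\bdry T = 0$. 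The mass-of-prism bound $\int g\,d\|U_x\| \leq |x|\int_0^1\int g(y+tx)\,d\|T\|(y)\,dt$ (and analogously for the boundary term), combined with averaging over $x\in B(0,r)$, Fubini, and the change of variable $z=tx$ (so that $\fint_{B(0,r)} g(y+tx)\,dx = \fint_{B(y,tr)} g \leq Mg(y)$, where $M$ is the Hardy--Littlewood maximal operator), will yield the key pointwise estimate
\[
|X_r(T) - X(T)| \leq r\Big(\int Mg\, d\|T\| + \int Mh\, d\|\bdry T\|\Big).
\]

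\textbf{Easy regime, modulus argument, and main obstacle.} When $p > n-m$ (and also $q > n-m+1$ in Part (2)), Proposition~\ref{prop:cont-averages-RW13} already provides a bound with strictly positive $r$-exponent, handling the $\Lambda_i = \emptyset$ cases directly. In the remaining regime, the prism estimate combined with the Hardy--Littlewood maximal inequality (applicable because $p, q > 1$), which gives $Mg\in L^p$ and $Mh\in L^q$, together with the standard modulus observation---for $f\in L^\lambda$ the family $\{T : \int f\,d\|T\| = \infty\}$ has $\lambda$-modulus zero (test $\delta f$ and let $\delta\to 0$)---localizes the failure set. A diagonal subsequence $r_j\to 0$ together with Fuglede's lemma then extracts a single exceptional family independent of the sequence. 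The delicate remaining issue is matching the notion of smallness stated in the theorem. For Part (1), $\Lambda_1$ must have $p$-capacity zero, i.e., the family of bounding $(m+1)$-chains $S$ with $\bdry S\in\Lambda_1$ must have $p$-modulus zero; I plan to recast the prism estimate in terms of $\|S\|$ rather than $\|T\|$ by redoing the averaging with $\varphi_x S - S$ in place of $U_x$, exploiting the decomposition $\varphi_x S - S = \bdry V_x + U_x$ with $V_x := [0,x]\times S$. For Part (2), the Sobolev exponent $p^* = pn/(n-p)$ appearing in $\nu = \min\{q, p^*\}$ must arise when $p\leq n-m$ through a Sobolev--Poincar\'e-type inequality that upgrades the $L^p$-integrability of the upper gradient to $L^{p^*}$-integrability against polyhedral mass measures. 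Implementing these two upgrades cleanly, and reconciling them with the diagonal subsequence extraction, is where the bulk of the technical work will lie.
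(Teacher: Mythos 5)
Your skeleton is on the right track and matches the paper's starting point: decompose ${\varphi_x}_\#T-T$ into a boundary of a prism over $T$ plus a prism over $\bdry T$, bound the mass of these prisms, average over $x\in B(0,r)$, and apply Fubini. However, the two issues you flag at the end as "remaining technical work" are in fact the heart of the proof, and your sketches of how to resolve them do not go through.

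\textbf{The maximal function throws away the fractional-integration gain.} After Fubini, what actually appears on the right-hand side is not $rMg(y)$ but the truncated Riesz potential $I_r(g)(y) = \int_{B(y,r)}\frac{g(x)}{|x-y|^{n-1}}\,dx$ (the paper's Lemma~\ref{lem:Riesz potential}). One has $I_r(g)\leq Cr\,Mg$, so your estimate is a weakened corollary of theirs. The problem is that you need the sharper object: $I_1$ maps $L^p$ into $L^{np/(n-p)}$, whereas $M$ only maps $L^p$ into $L^p$. Your exceptional set is therefore only of $p$-modulus zero, not $pn/(n-p)$-modulus zero, so it fails the stated $\nu$-modulus conclusion whenever $q > p$. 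You gesture at a "Sobolev--Poincar\'e-type inequality" to repair this, but the repair would simply be to not replace $I_r(g)$ by $rMg(y)$ in the first place, and then to use $\Lambda_2''=\{T: \int I_1(g)\,d\|T\|=\infty\}$ as the exceptional set.

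\textbf{Passing from $\|T\|$ to $\|S\|$ (and from $\|\bdry T\|$ to $\|T\|$) is the hard part, and your plan for it does not work.} For Part (1), $\Lambda_1$ must have $p$-\emph{capacity} zero, i.e.\ the family of filling chains $S$ with $\bdry S\in\Lambda_1$ must have $p$-modulus zero; your estimate, however, is in terms of $\int Mg\,d\|T\|$, which only controls the $p$-modulus of $\{T\}$ itself, not the $p$-modulus of its fillings. Your proposed fix --- "redo the averaging with ${\varphi_x}_\#S - S = \bdry V_x + U_x$ where $V_x=[0,x]\times S$" --- does not help: the upper-gradient bound on $|X({\varphi_x}_\#T) - X(T)| = |X(\bdry U_x)|$ necessarily involves $\|U_x\|$, which is a prism over $T$; writing $U_x = {\varphi_x}_\#S - S - \bdry V_x$ and estimating masses gives you $\|S\|$ and $\|{\varphi_x}_\#S\|$, which do \emph{not} tend to zero as $x\to0$. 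The actual mechanism in the paper (Propositions~\ref{prop:isoperimetric} and \ref{prop:Riesz-maximal-fct-est}) is quite different: using the Lipschitz neighborhood-retract structure of the polyhedral complex $\spt T$ (Lemma~\ref{lem:LNR-simplicial-complex}), the slicing inequality for the distance function, and the fact that a Lipschitz retraction kills a cycle supported in an $m$-complex, one proves an isoperimetric-type bound that converts $\int I_r(g)\,d\|T\|$ into $C\int_{N(T,r)}Mg\,d\|S\|$. Moreover, the same conversion is also needed for Part (2), which you do not address: your term $\int Mh\,d\|\bdry T\|$ gives an exceptional family that is small in the sense of $(m-1)$-chains, and there is no general implication from that to a $q$-modulus-zero family of $m$-chains $T$; the paper resolves this by applying Proposition~\ref{prop:Riesz-maximal-fct-est} with $T$ as the filling of $\bdry T$, converting $\int I_r(h)\,d\|\bdry T\|$ into $\int_{N(\bdry T, r)}Mh\,d\|T\|$.

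\textbf{A minor point.} The "diagonal subsequence together with Fuglede's lemma" step is unnecessary: once the pointwise bound $|X_r(T)-X(T)| \lesssim \int \bigl(Mh\cdot\chi_{N(\bdry T,r)} + I_r(g)\bigr)\,d\|T\|$ is in hand for fixed $T$ outside the exceptional family, the right-hand side goes to zero as $r\to 0$ by monotone/dominated convergence, with no need to extract a subsequence.
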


The proof of this theorem will be given in Section~\ref{subsec:proof-smoothening}. We first establish some preliminary results which will be used in its proof. 

\subsection{Auxiliary results}

We will need the following results in the proof of Theorem~\ref{thm:smoothening}.

 
\begin{lemma}    \label{lem:Riesz potential}
Let $0\leq m\leq n-1$ and let $T\in\poly{m}$. Let furthermore $u : \R^n \to [0,\infty]$ be a Borel function. For $x \in \R^n$ define 
$$S_x:={\psi_{x}}_\#([0,1]\times T)\in\poly{m+1},$$ 
where 
$$
\psi_x:[0,1]\times \R^n \to \R^n, \quad \psi_x(t,z)=z+tx. 
$$
Then for every $r>0$ we have
$$
\fint_{B(0,r)} \int_{\R^n} u(y) \, d\| S_x\|(y)dx \leq (n-1)^{-1} \lm^n(B(0,1))^{-1} \int_{\R^n} I_r(u)(y) \,d\|T\|(y),
$$
where $I_r$ is the truncated Riesz potential 
$$I_r(u)(y):= \int_{B(y,r)} \frac{u(x)}{|x-y|^{n-1}}\, dx.$$
\end{lemma}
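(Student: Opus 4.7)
The strategy is to bound $\|S_x\|$ by the parametric area of the affine parametrization $\psi_x$, then to swap the integration in $x$ with the integration against $\|T\|$ by Fubini and perform a change of variables that reveals the Riesz kernel.

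First step (the Jacobian bound). Write $T=\sum_i a_i T_i$ in a non-overlapping representation. Then $S_x=\sum_i a_i\,{\psi_x}_\#([0,1]\times T_i)$. For each $i$, the restriction $\psi_x|_{[0,1]\times T_i}$ is an affine map sending the intrinsic tangent space $\R\partial_t\oplus V_i$ to $\R x+V_i\subset\R^n$, where $V_i$ is the affine direction of $T_i$. A short Gram-determinant computation gives that its $(m+1)$-dimensional Jacobian equals $|\pi_{V_i^\perp}(x)|\le |x|$. Combining the triangle inequality for mass with the area formula then yields, for every non-negative Borel function $u$, the pointwise-in-$x$ bound
$$\int_{\R^n} u\,d\|S_x\|\ \le\ |x|\int_0^1\!\int_{\R^n} u(z+tx)\,d\|T\|(z)\,dt.$$

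Second step (averaging and substitution). Integrate over $x\in B(0,r)$, divide by $\lm^n(B(0,r))=\lm^n(B(0,1))r^n$, and use Fubini to bring $\|T\|$ outside. For fixed $z$ and $t\in(0,1]$, substitute $y=z+tx$, so that $|x|=|y-z|/t$, $dx=t^{-n}dy$, and the condition $x\in B(0,r)$ becomes $y\in B(z,tr)$. This turns the inner integral into
$$t^{-n-1}\int_{B(z,tr)} u(y)\,|y-z|\,dy.$$
A further application of Fubini in the $(t,y)$ variables, using that $y\in B(z,tr)\iff t\in(|y-z|/r,1]$, gives, for $y\in B(z,r)$,
$$\int_0^1 t^{-n-1}\mathbf{1}_{B(z,tr)}(y)\,dt \ =\ \frac{1}{n}\!\left(\frac{r^n}{|y-z|^n}-1\right) \ \le\ \frac{r^n}{n|y-z|^n}.$$
Putting everything together produces exactly $\int I_r(u)(z)\,d\|T\|(z)$ with constant $1/(n\lm^n(B(0,1)))$, which is dominated by $1/((n-1)\lm^n(B(0,1)))$ as claimed.

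Main obstacle. The only non-routine point is the mass bound in the first step: one must handle the possibility that $\psi_x$ is not injective on $[0,1]\times T_i$ (for instance when $x$ lies in $V_i$), in which case the pushforward polyhedral chain may have cancellations. This is resolved by the standard fact that the mass of a pushforward is always dominated by the parametric area, so the inequality direction is preserved; the remaining Fubini manipulations and the one-dimensional computation in $t$ are routine.
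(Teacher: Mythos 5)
Your proof is correct, and it reaches the conclusion by a computation that is genuinely different from the paper's, while starting from the same mass inequality. The paper also bounds $\| S_x\|$ by the pushforward ${\psi_{x}}_\#(\lm^1 \times \| T\|)$ but immediately replaces the Jacobian factor $|x|$ by the cruder bound $r$ (citing \cite[4.1.9]{Fed69}), then passes to polar coordinates $x=\tau\theta$, rescales $t\tau\mapsto t$ while enlarging the domain of integration, integrates out $\tau$ to produce the factor $(n-1)^{-1}r^{n-1}$, and converts back from polar coordinates to recognize the Riesz kernel. You instead keep the sharper factor $|x|$, avoid polar coordinates altogether, substitute $y=z+tx$ directly, and evaluate the remaining one-dimensional integral $\int_{|y-z|/r}^{1} t^{-n-1}\,dt$ by Fubini. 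Your route is arguably cleaner and yields the slightly better constant $1/n$ in place of $1/(n-1)$, which you then relax to match the stated bound. The one point you correctly flag as needing care---that pushforward mass is dominated by parametric area even under cancellations or degeneracy---is exactly the content of the Federer reference in the paper, and your Gram-determinant computation of the Jacobian as $|\pi_{V_i^\perp}(x)|\le|x|$ is sound. No gaps.
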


\begin{proof}
By \cite[4.1.9]{Fed69}, we have
$$
\| S_x\| \leq r{\psi_{x}}_\#(\lm^1 \times \| T\|)
$$ 
for every $x\in B(0,r)$. 
Using polar coordinates and Fubini's theorem we calculate that
\begin{equation*}
\begin{split}
\int_{B(0,r)} \int_{\R^n} u(y) &\, d\| S_x\|(y)dx\\
&\leq r   \int_{B(0,r)} \int_0^1 \int_{\R^n} u(z+tx)\, d\| T\|(z) dt dx\\
&= r \int_0^r \tau^{n-1} \int_{S^{n-1}} \int_0^1 \int_{\R^n} u(z+t \tau \theta)\, d\|T \|(z) dt d\theta d\tau\\
&\leq r \int_0^r \tau^{n-2} \int_{S^{n-1}} \int_0^r \int_{\R^n} u(z+t \theta)\, d\|T \|(z) dt d\theta d\tau\\
&= \frac{r^n}{n-1} \int_{B(0,r)} \int_{\R^n} \frac{u(z+x)}{|x|^{n-1}}\, d\| T\|(z) dx\\
&= \frac{r^n}{n-1} \int_{\R^n} I_r(u)(y)\, d\| T\|(y),
\end{split}
\end{equation*}
which finishes the proof.
\end{proof}

The following notation will be useful in the sequel. For $T\in\poly{m}$ and $0<r<R$ set
 $$N(T,R,r)=N(T,R)\setminus \overline N(T,r),$$
 where $N(T,s)$ denotes the open $s$-neighborhood of the support of $T$. 
 
 \begin{proposition}   \label{prop:isoperimetric}
Let $0\leq m\leq n-1$ and $T\in\poly{m}^0$. Then there exist $r_0,A>0$ and $t\geq 1$ (depending on $T$) with the following property. For every $S\in\poly{m+1}$ with $\bdry S=T$ we have
 $$
 \|S \|\Big((B(y,r)\setminus \overline B(y,r/2))\cap N(T,r,t^{-1}r)\Big)\geq Ar \| T\|(B(y,2r))
 $$
 for every $0<r<r_0$ and every $y\in \spt(T)$.
 \end{proposition}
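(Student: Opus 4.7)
My plan is to reduce the lower mass bound to an estimate at the level of radial slices, and then use the polyhedral/cone-like structure of $T$ near $y$ together with a projection argument.

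\emph{Localization.} Write $T=\sum a_i T_i$ in non-overlapping form. Since $\partial T=0$ and $T$ is polyhedral, for each $y\in\spt T$ there is $r_0>0$ such that for every $0<r<r_0$ the intersection $\spt T\cap B(y,2r)$ is affinely a rescaled copy of $\spt T\cap B(y,2r_0)$ with center $y$, and the nearest-point projection $\pi\colon N(T,r)\cap B(y,2r)\to\spt T$ is a Lipschitz retraction with controlled constant (this is where I absorb the geometric hypotheses on the cone of faces through $y$, and where $t$ has to be taken large enough so that the inner tube $N(T,t^{-1}r)$ fits comfortably inside the regular part of the projection). In this regime, $\|T\|(B(y,2r))\asymp r^m$ with constants depending only on $(T,y)$.

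\emph{Radial slicing.} For the function $d_y(x)=|x-y|$, the coarea/slice inequality for polyhedral chains (viewed as normal currents) gives, for a.e.\ $\rho\in(r/2,r)$, a polyhedral $m$-chain $\sigma(\rho):=\langle S,d_y,\rho\rangle$ on $\partial B(y,\rho)$ with $\partial\sigma(\rho)=-\tau(\rho)$, where $\tau(\rho):=\langle T,d_y,\rho\rangle$, and
\[
\|S\|\bigl((B(y,r)\setminus \overline B(y,r/2))\cap N(T,r,t^{-1}r)\bigr)\;\geq\;\int_{r/2}^{r}\mass\bigl(\sigma(\rho)\rstr N(T,r,t^{-1}r)\bigr)\,d\rho.
\]
If I can prove the pointwise lower bound $\mass\bigl(\sigma(\rho)\rstr N(T,r,t^{-1}r)\bigr)\geq A_0\,\|T\|(B(y,2r))$ for a.e.\ $\rho\in(r/2,r)$, with $A_0=A_0(T,y)$, then integration yields the proposition with $A=A_0/2$.

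\emph{Slice lower bound (main obstacle).} This is the technical core. The slice $\sigma(\rho)$ fills the trace $\tau(\rho)$, which is an $(m-1)$-chain supported on $\spt T\cap\partial B(y,\rho)$. The goal is to show that a definite fraction of the filling must live in the annular tube $N(T,r,t^{-1}r)$, rather than escape into the outer region $\R^n\setminus N(T,r)$ or be absorbed by the inner tube $N(T,t^{-1}r)$. My approach is to push $\sigma(\rho)\rstr N(T,r)$ forward by $\pi$; since $\spt T$ is $m$-dimensional, $\pi_{\#}\sigma(\rho)\rstr N(T,r)$ agrees with $T\rstr B(y,\rho)$ modulo an $m$-cycle on $\spt T$, and the cone structure at $y$ forces this cycle to be small enough that the mass is bounded below by $c\|T\|(B(y,2r))$. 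Transferring back via the Lipschitz bound on $\pi$ gives a mass bound on $\sigma(\rho)\rstr N(T,r)$, and one then argues that the part in the inner tube $\sigma(\rho)\rstr N(T,t^{-1}r)$ cannot carry all of it: for $t$ large, the available mass in $N(T,t^{-1}r)\cap \partial B(y,\rho)$ is too small to fill $\tau(\rho)$ alone, since $\tau(\rho)$ has $(m-1)$-volume $\asymp r^{m-1}$ and any chain filling it inside a tube of thickness $t^{-1}r$ must have $m$-volume $\lesssim t^{-1}r\cdot r^{m-1}$.

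The main difficulty is making the last step of Step~3 quantitative: controlling the $m$-cycle defect of $\pi_{\#}\sigma(\rho)$ and the partition of mass between the inner and outer parts of the tube. A robust way to do this is to analyze each top-dimensional face of $T$ through $y$ separately, using that across such a face the filling $S$ behaves, in the transverse $1$-dimensional direction, as a $1$-chain bounding the local jump $a_i$, so that its mass in the transverse annulus $(t^{-1}r,r)$ is at least $|a_i|(r-t^{-1}r)$; summing over faces recovers $\|T\|(B(y,2r))$ up to constants.
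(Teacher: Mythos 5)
Your plan has the right skeleton --- radial slicing together with a projection onto $\spt T$ --- but the two core steps do not work as stated. First, the nearest-point projection onto $\spt T$ is not Lipschitz, nor even single-valued, near branch points where several faces of the complex meet, so it cannot serve as your retraction $\pi$. The paper instead constructs a genuine Lipschitz retraction $\varrho\colon N(T, r_1)\to\spt T$ by proving that a finite simplicial complex is a local Lipschitz neighborhood retract (Lemma~\ref{lem:LNR-simplicial-complex}, via Almgren's Lipschitz extension criterion and an explicit conical deformation inside each simplex). Second, your ``modulo an $m$-cycle'' statement in Step~3 would require substantial work that is in fact unnecessary: the paper proves the \emph{exact} identity $\varrho_\#\langle S, d_y, s\rangle = -T\rstr B(y,s)$ with no defect. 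Setting $V := \varrho_\#\langle S, d_y, s\rangle + T\rstr B(y,s)$, one sees $V$ is an $m$-cycle supported in $\spt T$ near $y$; a normal filling $W$ of $V$ can be projected into a small ball and pushed forward by $\varrho$, and then $\varrho_\# W$, being an $(m+1)$-current supported in the $m$-dimensional complex $\spt T$, must vanish, so $V=\partial(\varrho_\# W)=0$.

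The inner-tube step is the most serious gap: you argue that $\sigma(\rho)\rstr N(T,t^{-1}r)$ cannot carry all the mass because ``the available mass in $N(T,t^{-1}r)\cap\partial B(y,\rho)$ is too small to fill $\tau(\rho)$.'' This is false as stated --- the mass of a polyhedral (or normal) chain is not bounded by the Hausdorff measure of its support, since multiplicities can be arbitrarily large, so there is no a priori upper bound on $\|\sigma(\rho)\|(N(T,t^{-1}r))$. The paper handles the inner tube by a purely geometric observation that avoids mass estimates entirely: with $t = 400\lambda$, any $x\in N(T,t^{-1}r)$ is moved by $\varrho$ a distance less than $r/200$, so for $s\in(r/2,r)$ the pushforward $\varrho_\#\bigl(\langle S, d_y, s\rangle\rstr N(T,t^{-1}r)\bigr)$ stays outside $B(y,99s/100)$ and contributes nothing to $T\rstr B(y,99s/100)$. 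Combined with the exact identity above this gives $\|T\|(B(y,99s/100)) \leq \lambda^m \|\langle S, d_y, s\rangle\|(N(T,t^{-1}r)^c)$, and integrating over $s\in(r/2,r)$ via the slicing inequality, together with a doubling estimate for $\|T\|$ (which holds since $T$ is polyhedral), yields the stated bound.
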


We postpone the proof of this proposition until Section~\ref{subsec:prop-isoperimetric} since it is quite different in spirit from the rest of the proofs in this section.
From Proposition~\ref{prop:isoperimetric} we deduce the following fact.

\begin{proposition}\label{prop:Riesz-maximal-fct-est}
 Let $0\leq m\leq n-1$ and $T\in\poly{m}^0$. Then there exist $C, r_0>0$ with the following property. For every  $S\in\poly{m+1}$ with $\bdry S = T$ we have
 \begin{equation*}
  \int_{R^n} I_r(h)(y)\,d\|T\|(y) \leq C \int_{N(T, r)} Mh(y) \,d\|S\|(y)
 \end{equation*}
 for every $0<r<r_0$ and every Borel function $h: \R^n\to[0, \infty]$. Here $Mh$ is the Hardy-Littlewood maximal function of $h$. 
\end{proposition}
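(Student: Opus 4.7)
The strategy is to reduce the proposition to a uniform scale-by-scale average bound derived from Proposition~\ref{prop:isoperimetric}, and then to integrate across all scales using a decomposition of the Riesz kernel. The starting point is the identity
\begin{equation*}
I_r(h)(y)=r^{1-n}\int_{B(y,r)} h\,dx+(n-1)c_n\int_0^r\fint_{B(y,\tau)}h\,dx\,d\tau,
\end{equation*}
with $c_n=\lm^n(B(0,1))$, obtained by writing $|x-y|^{1-n}=r^{1-n}+(n-1)\int_{|x-y|}^r\tau^{-n}\,d\tau$ for $|x-y|\leq r$ and applying Fubini. This reduces the proposition to proving the scale-by-scale estimate
\begin{equation*}
\int\fint_{B(y,2s)}h\,d\|T\|(y)\leq \frac{C'}{s}\int_{N(T,s,t^{-1}s)}Mh\,d\|S\|,\qquad 0<s<r_0.
\end{equation*}

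For the scale-by-scale estimate, fix $y\in\spt T$ and $0<s<r_0$. Every $z$ in $E(y,s):=(B(y,s)\setminus\overline B(y,s/2))\cap N(T,s,t^{-1}s)$ satisfies $B(y,2s)\subset B(z,3s)$, so $\fint_{B(y,2s)}h\leq (3/2)^n Mh(z)$; integrating this against $\|S\|\rstr E(y,s)$ and combining with the lower bound from Proposition~\ref{prop:isoperimetric} yields
\begin{equation*}
\fint_{B(y,2s)}h\cdot As\,\|T\|(B(y,2s))\leq (3/2)^n\int_{E(y,s)}Mh\,d\|S\|.
\end{equation*}
Since $T$ is polyhedral, there exist $c,C,r_1>0$ depending on $T$ such that $c\rho^m\leq\|T\|(B(y,\rho))$ for every $y\in\spt T$ and $\|T\|(B(z,\rho))\leq C\rho^m$ for every $z\in\R^n$, whenever $0<\rho\leq r_1$; we replace $r_0$ by $\min(r_0,r_1)$. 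Dividing the previous inequality by $As\,\|T\|(B(y,2s))$, integrating over $y\in\spt T$ against $d\|T\|$, and swapping the order of integration with Fubini reduces matters to controlling
\begin{equation*}
\int\frac{\mathbf{1}_{E(y,s)}(z)}{\|T\|(B(y,2s))}\,d\|T\|(y).
\end{equation*}
This integral vanishes for $z\notin N(T,s,t^{-1}s)$ and for $z$ in this shell is bounded by $\|T\|(B(z,s))/(c(2s)^m)\leq C/(c2^m)$, a constant independent of $s$. The scale-by-scale estimate follows.

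Applied to the two terms of the decomposition this completes the proof: for the first term set $2s=r$; for the second, set $2s=\tau$ and swap the order of integration. For fixed $z$ with $d:=\dist(z,\spt T)>0$ the indicator $\mathbf{1}_{z\in N(T,\tau/2,(2t)^{-1}\tau)}$ is supported on $\tau\in[2d,2td)$, so the inner integral $\int_0^r\tau^{-1}\mathbf{1}\,d\tau\leq\log t$ uniformly in $z$, and the $\tau$-contribution is dominated by $(\log t)\int_{N(T,r)}Mh\,d\|S\|$. The main technical point is the lower polyhedral density bound $\|T\|(B(y,\rho))\geq c\rho^m$ on $\spt T$: the upper bound is immediate from the non-overlapping decomposition $\|T\|=\sum_i|a_i|\haus^m\rstr T_i$, whereas the lower bound requires a case analysis at the low-dimensional cells where several $T_i$ meet, using the polyhedral structure to rule out density degeneration at small scales. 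Once the density bounds are in place, the annular restriction $N(T,s,t^{-1}s)$ built into Proposition~\ref{prop:isoperimetric} is precisely what keeps the final $\tau$-integration bounded (by $\log t$) rather than divergent.
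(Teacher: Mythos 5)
Your argument is correct and rests on the same ingredients as the paper's proof: the isoperimetric lower bound from Proposition~\ref{prop:isoperimetric}, a scale-by-scale estimate obtained by averaging over the annular shell $E(y,s)=(B(y,s)\setminus\overline B(y,s/2))\cap N(T,s,t^{-1}s)$, Fubini, and the observation that the shell restriction contributes only a $\log t$ factor. The decomposition of $I_r(h)$ differs: you use the continuous layer-cake identity $|x-y|^{1-n}=r^{1-n}+(n-1)\int_{|x-y|}^r\tau^{-n}\,d\tau$, whereas the paper splits into dyadic annuli $A_j(y)=B(y,2^{-j}r)\setminus\overline B(y,2^{-(j+1)}r)$ and sums the pieces $H_j(y)$; both extract the $\log t$ overlap, yours via $\int d\tau/\tau$ and the paper's by counting the finitely many indices $j$ with a given $w$ in $N_j(T)$. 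One simplification you should make: to bound $\int \chi_{E(y,s)}(z)\,\|T\|(B(y,2s))^{-1}\,d\|T\|(y)$ you do not need the matching density bounds $c\rho^m\leq\|T\|(B(y,\rho))\leq C\rho^m$ that you single out as the main technical point. Since $z\in E(y,s)$ forces $|y-z|<s$, one has $B(z,s)\subset B(y,2s)$ and hence $\|T\|(B(y,2s))\geq\|T\|(B(z,s))$; the integral is then at most $\|T\|(B(z,s))/\|T\|(B(z,s))=1$. This is precisely how the paper handles the same step (there in the form $B(w,2^{-j}r)\subset B(y,2^{-j+1}r)$ for $y\in B(w,2^{-j}r)$), and it eliminates the density bounds, the case analysis at low-dimensional cells, and the need to further shrink $r_0$. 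Your lower density bound is in fact valid for polyhedral chains (monotonicity of $\rho\mapsto\rho^{-m}\haus^m(T_i\cap B(y,\rho))$ by convexity of each $T_i$, plus compactness of $T_i$), so the detour does not invalidate the proof, but it does complicate it unnecessarily.
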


\begin{proof}
Let $A$, $t$ and $r_0$ be as in Proposition \ref{prop:isoperimetric} for $T$, and fix $0<r<r_0$. For $j=0,1,\dots$ define
$$
A_j(y)=B(y,2^{-j}r)\setminus \overline B(y,2^{-(j+1)}r) \quad \text{and} \quad  N_j(T)=N(T,2^{-j}r, t^{-1}2^{-j}r).
$$ 
Set
$$H_j(y):=\int_{A_j(y)} \frac{h(x)}{|x-y|^{n-1}}dx,$$
and note that $I_r(h)(y)=\sum_j H_j(y)$. If $y\in\spt(T)$ then we have
$$
H_j(y)\leq D2^{-j}r Mh(w) \quad \text{ for every } w\in A_j(y)\cap N_j(T)=:Q_j(y), 
$$
where $D$ is a constant depending only on $n$.  
Consequently,
 $$H_j(y)\leq D2^{-j} r\left( \| S\|(Q_j(y))\right)^{-1} \int_{\R^n} Mh(w) \chi_{Q_j(y)}(w)\, d\| S\| (w).$$
 Integration and Fubini's theorem then yield
 \begin{equation*}
 \int_{\R^n} H_j(y)\, d\|T\|(y)
 \leq D \int_{\R^n} Mh(w) 2^{-j}r\chi_{N_j(T)}(w) \int_{\R^n} \frac{\chi_{B(w,2^{-j}r)}(y)}{\| S\|(Q_j(y))}\,d\|T\|(y) d\| S \|(w).
 \end{equation*}
 We apply Proposition \ref{prop:isoperimetric} to bound the right-hand-side from above by
 $$A^{-1} D \int_{\R^n} Mh(w) \chi_{N_j(T)}(w)\, d\| S\|(w).$$
 Summing over $j$ yields 
 \begin{equation*}
   \int_{R^n} I_r(h)(y)\,d\|T\|(y) \leq C\int_{\R^n} Mh(y) \chi_{N(T,r)}(y)\,d\| S\|(y),
 \end{equation*}
 with a constant $C$ depending on $T$ but not on $r$.
\end{proof}

\subsection{Proof of the main continuity result}\label{subsec:proof-smoothening}
We now turn to the proof of Theorem \ref{thm:smoothening}.

\begin{proof}[Proof of Theorem \ref{thm:smoothening}]
We first prove statement (i). For this, let $X\in \cW_{p}(\poly{m}^0)$ be an additive cochain with $p$-integrable upper gradient $g$. We may assume that $m\leq n-1$ because $\poly{n}^0=\{0\}$ and thus $X\equiv 0$ when $m=n$. Set
 $$\Lambda_1:= \left\{ T\in\poly{m}^0: \int_{\R^n} Mg \ d\| S\|=\infty \text{ for all } S\in\poly{m+1} \text{ such that } \bdry S=T \right\}.$$
If $p=\infty$ then it follows that $\Lambda_1=\emptyset$. If $p<\infty$ then $Mg\in L^p(\R^n)$ by the maximal function theorem and hence $\capa_p(\Lambda_1)=0$. 
Let $T\in\poly{m}^0\setminus\Lambda_1$. Then there exists $S\in\poly{m+1}$ with $\bdry S = T$ and such that 
\begin{equation}\label{eq:finite-int-Mg}
 \int_{\R^n} Mg \ d\| S\|<\infty.
 \end{equation}
For $x\in\R^n$ set $V_x:= {\psi_{x}}_\#([0,1]\times T)$, where $\psi_x$ is as in Lemma \ref{lem:Riesz potential}, and note that ${\varphi_{x}}_\#T-T= - \bdry V_x$.
By the upper gradient inequality and Lemma \ref{lem:Riesz potential} we have
\begin{equation*}
\fint_{B(0,r)} |X(\partial V_x)|\, dx \leq D\int_{\R^n} I_r(g)(y) \, d\| T\|(y)
\end{equation*}
for every $r>0$, where $D$ is a constant only depending on $n$. This together with Proposition~\ref{prop:Riesz-maximal-fct-est} yields
\begin{equation*}
\fint_{B(0,r)} |X(\partial V_x)|\, dx \leq C \int_{N(T, r)} Mg(y)\, d\|S\|(y)
\end{equation*}
for every $0<r<r_0$, where $C$ is a constant depending on $T$ and $n$ but not on $r$, and where $r_0>0$ is as in Proposition~\ref{prop:Riesz-maximal-fct-est}. 
Together with \eqref{eq:finite-int-Mg} this yields $|X(\partial V_x)|<\infty$ for almost every $x\in B(0,r)$. Note also that $|X({\varphi_{x}}_\#T)|<\infty$ for almost every $x\in B(0,r)$ by the upper gradient inequality and Lemma~\ref{lem:modulus translates}. The subadditivity property of $X$ thus implies that $|X(T)|<\infty$ and hence
 \begin{equation*}
|X_r(T)-X(T)| \leq \fint_{B(0,r)} |X(\partial V_x)|\, dx
\end{equation*}
by the additivity property of $X$. Consequently,
 \begin{equation*}
|X_r(T)-X(T)| \leq C \int_{N(T, r)} Mg(y)\, d\|S\|(y).
\end{equation*}
Given \eqref{eq:finite-int-Mg} we have
$$\int_{\R^n} Mg \ \chi_{N(T,r)} d\| S\| \to 0 \text{ as } r\to 0$$
 by absolute continuity of integrals, and thus we obtain $|X_r(T)-X(T)|\to 0$ as $r\to0$. 
 
 In order to complete the proof of statement (i) it remains to show that we may take $\Lambda_1=\emptyset$ when $n-m<p<\infty$. In this case it follows directly from Proposition~\ref{prop:cont-averages-RW13} that for every $T\in\poly{m}^0$ we have $|X_r(T) - X(T)|\to 0$ as $r\to 0$. (Alternatively, one can show that the family $\Lambda_1$ defined above is in fact empty. Indeed, for every $T\in\poly{m}^0$ and every $n-m<p<\infty$ we have $\capa_p(\{T\})>0$ by the proof of \cite[Proposition~4.17]{RW13} and hence $\Lambda_1=\emptyset$. We remark that \cite[Proposition~4.17]{RW13} is stated in the setting of integral or normal currents but the same arguments apply in the setting of polyhedral chains.)
 This completes the proof of statement (i).
 
 We turn to statement (ii). Let $X\in \cW_{q,p}(\poly{m})$ be an additive cochain with $q$-integrable upper norm $h$ and $p$-integrable upper gradient $g$. If $m=n$ then the function $x\mapsto X({\varphi_x}_\#T)$ is continuous and thus the statement holds trivially. We may therefore assume from now on that $m\leq n-1$. Given $T\in\poly{m}$ and $x\in\R^n$ we can write 
$$
{\varphi_{x}}_\#T-T= {\psi_{x}}_\#([0,1]\times \partial T)-\partial {\psi_{x}}_\#([0,1]\times T)=: U_x - \partial V_x, 
$$
where $\psi_x$ is as in Lemma \ref{lem:Riesz potential}. Note that if $m=0$ then $U_x=0$ by definition. 
We now distinguish two cases. First assume that $1<p\leq n-m$ or $p=\infty$. Set $\Lambda_2:= \Lambda_2'\cup\Lambda_2''$, where
 $$\Lambda_2':=\left\{ T\in\poly{m}: \int_{\R^n} Mh \ d\| T\|=\infty \right\}$$
 and
 $$\Lambda_2'':=\left\{ T\in\poly{m}: \int_{\R^n} I_1(g) \ d\| T\|=\infty \right\}.$$
 Here, $I_1(g)$ denotes the truncated Riesz potential with $r=1$ defined in Lemma~\ref{lem:Riesz potential}. 
Note that if $p=\infty$ then $\Lambda_2''=\emptyset$, and if $q=\infty$ then $\Lambda_2'=\emptyset$. In particular, it follows that $\Lambda_2=\emptyset$ in the case that $q=p=\infty$. If $q<\infty$ then $M_q(\Lambda_2')=0$ since $Mh\in L^q(\R^n)$ by the maximal function theorem. Also, if $1< p\leq n-m$ then $M_{pn/(n-p)}(\Lambda_2'')=0$ since $I$ maps $L^p(\R^n)$ to $L^{np/(n-p)}(\R^n)$, cf. \cite[page 20]{Hei01}. Since $M_{p_0}(\Lambda)=0$ implies $M_{q_0}(\Lambda)=0$ for $q_0<p_0$, we have $M_\nu (\Lambda_2)=0$.
Now, let $T\in\poly{m}\setminus\Lambda_2$. Clearly,
\begin{equation}\label{eq:fin-int-caseii}
\int_{\R^n} (Mh(y) + I_1(g)(y)) \,d\|T\|(y)<\infty.
\end{equation}
From the upper gradient inequality and Lemma~\ref{lem:Riesz potential} we infer
\begin{equation}\label{eq:est-fint-Vx}
 \fint_{B(0,r)} |X(\partial V_x)|\, dx \leq D\int_{\R^n} I_r(g)(y) \, d\| T\|(y)
\end{equation}
for every $r>0$, where $D$ is a constant only depending on $n$. Analogously, from the upper norm inequality and  Lemma~\ref{lem:Riesz potential} we obtain
 \begin{equation*}
 \fint_{B(0,r)} |X(U_x)|\, dx \leq D \int_{\R^n} I_r(h)(y) \,d\| \bdry T\|(y)
\end{equation*}
for every $r>0$. Proposition~\ref{prop:Riesz-maximal-fct-est} thus implies
\begin{equation}\label{eq:est-fint-Ux}
  \fint_{B(0,r)} |X(U_x)|\, dx \leq C\int_{\R^n} Mh(y) \chi_{N(\bdry T,r)}(y)  \,d\| T\|(y)
\end{equation}
for every $0<r<r_0$, where $C$ is a constant depending on $\bdry T$ and $n$, but not on $r$. It thus follows together with \eqref{eq:fin-int-caseii} that $|X(U_x)|<\infty$ and $|X(\partial V_x)|<\infty$ for almost every $x\in B(0,r)$. Moreover, $|X({\varphi_{x}}_\#T)|<\infty$ for almost every $x\in B(0,r)$ by the upper norm inequality and Lemma~\ref{lem:modulus translates}. This together with the subadditivity property of $X$ shows that $|X(T)|<\infty$. Now, the additivity property of $X$ yields 
\begin{equation*}
 |X_r(T)-X(T)|\leq \fint_{B(0,r)} |X(U_x)|\, dx + \fint_{B(0,r)} |X(\partial V_x)|\, dx
\end{equation*}
and therefore, in view of \eqref {eq:est-fint-Vx} and \eqref{eq:est-fint-Ux},
\begin{equation}\label{eq:cont-p-small}
|X_r(T)-X(T)|\leq C\int_{\R^n} \left(Mh(y) \chi_{N(\bdry T,r)}(y) + I_r(g)(y) \right) \,d\| T\|(y)
\end{equation}
for every $0<r<r_0$, where $C$ is a constant depending on $\bdry T$ and $n$, but not on $r$. Given \eqref{eq:fin-int-caseii}, it follows from absolute continuity of the integral and monotone convergence that the right side in \eqref{eq:cont-p-small} converges to $0$ as $r\to 0$ and thus $|X_r(T) - X(T)| \to0$ as $r\to0$. This proves statement (ii) when $1<p\leq n-m$ or $p=\infty$.

Now assume that $n-m<p<\infty$ and set $$\Lambda_2:= \left\{ T\in\poly{m}: \int_{\R^n} Mh \ d\| T\|=\infty \right\}.$$ Observe that if $q=\infty$ then $\Lambda_2=\emptyset$. Let $T\in\poly{m}\setminus\Lambda_2$. Firstly, as above, the upper norm inequality together with Lemma~\ref{lem:Riesz potential} and Proposition~\ref{prop:Riesz-maximal-fct-est} yields
\begin{equation}\label{eq:fint-Ux-caseii}
 \fint_{B(0,r)} |X(U_x)|\, dx \leq C\int_{\R^n} Mh(y) \chi_{N(\bdry T,r)}(y)  \,d\| T\|(y)
\end{equation}
for every $0<r<r_0$, where $C$ is a constant depending on $\bdry T$ and $n$, but not on $r$. In particular, $|X(U_x)|<\infty$ for almost every $x\in B(0,r)$. Moreover, as above, the integral on the right hand side of \eqref{eq:fint-Ux-caseii} converges to $0$ as $r\to 0$ and hence $$\fint_{B(0,r)} |X(U_x)|\, dx\to 0$$ as $r\to 0$.
Secondly, Propositions~\ref{prop:averages-RW13} and \ref{prop:cont-averages-RW13} yield, with $s=r^\alpha$ for $0<\alpha<\frac{p}{n}$, that for suitable constants $C,D$, and $C'$ depending only on $p,n,m$ we have 
\begin{equation*}
 \begin{split}
  |X(\partial V_x)| &\leq |X(\partial V_x) - X_s(\partial V_x)| + |X_s(\partial V_x)|\\
  &\leq C\Theta_m^{\frac{1}{p}}(\partial V_x) s^{1+\frac{m-n}{p}} \mass(\partial V_x)^{\frac{p-1}{p}} \|g\|_p + Ds^{-\frac{n}{p}}\mass(V_x)\|g\|_p\\
  &\leq C'[\Theta_m(T)+\Theta_{m-1}(\partial T)]^{\frac{1}{p}} r^{\alpha\left(1+\frac{m-n}{p}\right)}( \mass(T)+ r\mass(\partial T))^{\frac{p-1}{p}}\|g\|_p\\
  &\quad + Dr^{1-\alpha\frac{n}{p}}\mass(T)\|g\|_p
 \end{split}
\end{equation*}
for every $x\in B(0,r)$, where we have used $\mass(V_x)\leq |x|\,\mass(T)$ as well as $\mass(\partial V_x)\leq 2\mass(T) + |x|\mass(\partial T)$ and $\Theta_m(\partial V_x)\leq 2\Theta_m(T) + 2^{m-1}\Theta_{m-1}(\partial T)$. In particular, $|X(\partial V_x)|<\infty$ for every $x\in B(0,r)$ and, moreover, $$\fint_{B(0,r)} |X(\partial V_x)|\, dx\to 0$$ as $r\to0$. Thus, we see the same way as above that $|X(T)|<\infty$ and hence, with the additivity property of $X$, that
\begin{equation*}
 |X_r(T)-X(T)|\leq \fint_{B(0,r)} |X(U_x)|\, dx + \fint_{B(0,r)} |X(\partial V_x)|\, dx \to 0
\end{equation*}
as $r\to 0$. This shows that $|X_r(T)-X(T)|\to 0$ as $r \to 0$. 
 
 It remains to show that we may take $\Lambda_2=\emptyset$ in the case that $n-m+1<q<\infty$ and $n-m<p<\infty$. In this case, Proposition~\ref{prop:cont-averages-RW13} in fact yields that for every $T\in\poly{m}$ we have  $|X_r(T)-X(T)|\to 0$ as $r \to 0$. This concludes the proof of statement (ii) and thus of the theorem.
\end{proof}

\subsection{Proof of Proposition~\ref{prop:isoperimetric}}\label{subsec:prop-isoperimetric}
In order to prove Proposition~\ref{prop:isoperimetric} we will need the following result.

\begin{lemma}\label{lem:LNR-simplicial-complex}
 Let $X\subset\R^n$ be a finite simplicial complex. Then $X$ is a local Lipschitz neighborhood retract.
\end{lemma}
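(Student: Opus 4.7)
The plan is to prove a slightly stronger global statement by induction on $k=\dim X$: every finite simplicial complex $X\subset\R^n$ admits a Lipschitz retraction $r\colon W\to X$ from some open neighborhood $W$ of $X$ in $\R^n$. The local property asserted in the lemma then follows immediately. For the base case $k=0$, $X$ is a finite set of isolated points and disjoint balls around each point (of radius less than half the minimal pairwise distance) with constant maps to the centers assemble into a global Lipschitz retraction.

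For the inductive step, the key local ingredient is the following cone construction. Fix $x_0\in X$; after subdividing $X$ (which leaves the underlying set $|X|$ unchanged) we may assume $x_0$ is a vertex. Choose $\rho>0$ so small that $X\cap\overline{B}(x_0,3\rho)$ equals the truncated cone with apex $x_0$ over the link $L:=X\cap S(x_0,\rho)$, a finite simplicial complex of dimension at most $k-1$. By the induction hypothesis $L$ admits a Lipschitz retraction $r_L\colon W_L\to L$ from an open neighborhood $W_L\supset L$ in $\R^n$. I then define the cone retraction
\begin{equation*}
r_{x_0}(y):=x_0+\tfrac{|y-x_0|}{\rho}\bigl(r_L(x_0+\rho(y-x_0)/|y-x_0|)-x_0\bigr),\qquad r_{x_0}(x_0):=x_0,
\end{equation*}
on the open set of those $y\neq x_0$ for which the argument of $r_L$ lies in $W_L$. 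Because $L\subset S(x_0,\rho)$, the vectors $(r_L(\cdot)-x_0)/\rho$ have unit length, and combining this with the elementary identity $t_1(w_1-w_2)=(y_1-y_2)-(t_1-t_2)w_2$ for points $y_i=x_0+t_iw_i$ yields by direct computation the Lipschitz bound $|r_{x_0}(y_1)-r_{x_0}(y_2)|\leq(2\Lipconst(r_L)+1)|y_1-y_2|$.

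To assemble these local cone retractions near the vertices, together with the $1$-Lipschitz nearest-point projections $\pi_\sigma$ to top-dimensional simplices $\sigma$ (which give a retraction near the relative interior of each $\sigma$), into a single global Lipschitz retraction, I proceed by a further induction on the skeletal filtration $X^{(0)}\subset X^{(1)}\subset\cdots\subset X^{(k)}=X$: having constructed a Lipschitz retraction onto a neighborhood of $X^{(j)}$, I extend across the relative interiors of $(j+1)$-simplices $\sigma$ using $\pi_\sigma$ and interpolate in a thin collar of $\partial\sigma$ against the retraction already defined on a neighborhood of $X^{(j)}$. The main technical obstacle is precisely this patching step: the cone retractions $r_{x_0}$ are only defined on cone-shaped open sets around $x_0$, not on full balls, and the target $X$ is not convex, so direct linear interpolation of two retractions into $X$ is not well-defined. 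I plan to resolve this by passing to a simplicial subdivision of an ambient open neighborhood of $X$ in $\R^n$ that extends the triangulation of $X$, and defining the retraction piecewise linearly on each ambient simplex (sending faces lying in $X$ to themselves and collapsing the transverse directions onto $X$); this is essentially the classical regular-neighborhood collapse of PL topology, and its piecewise-linear structure on a bounded neighborhood automatically yields a Lipschitz retraction $W\to X$.
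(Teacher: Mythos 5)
Your route is genuinely different from the paper's. The paper does not attempt to build a Lipschitz retraction from an open Euclidean neighborhood of $X$; instead it invokes Almgren's criterion \cite{Alm62}: $X$ is a local Lipschitz neighborhood retract provided there are $C,\varepsilon>0$ such that every Lipschitz $f\colon S^r\to X$ with image in an $\varepsilon$-ball extends to a Lipschitz map $\bar f\colon B^{r+1}\to X$ with $\Lipconst(\bar f)\leq C\Lipconst(f)$. This is then verified by an explicit ``Lipschitz contraction toward a fixed simplex'' $\varphi\colon[0,1]\times A\to A$, where $A$ is a star-like neighborhood of the simplex \emph{inside} $X$; the map $\varphi$ plays a role analogous to your cone retraction, but it lives entirely within $X$, so the problem of interpolating two retractions into the non-convex target never arises.

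Your cone estimate is correct: the identity $t_1(w_1-w_2)=(y_1-y_2)-(t_1-t_2)w_2$ does yield $t_1|w_1-w_2|\leq 2|y_1-y_2|$ and hence the bound $(2\Lipconst(r_L)+1)|y_1-y_2|$. However, the patching difficulty you flag is a genuine gap, and the way you propose to close it undercuts the rest of the argument. Once you pass to an ambient triangulation extending that of $X$ and invoke the regular-neighborhood collapse, the inductive cone construction and the skeletal filtration become superfluous: the second-derived simplicial neighborhood of $X$ is a regular neighborhood, it collapses onto $X$ by Whitehead's theorem, a finite composite of elementary collapses gives a PL retraction, and a PL map on a compact polyhedron is Lipschitz --- so that is already a complete proof by itself. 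That alternative is valid, but it rests on substantial PL topology (existence and collapsibility of regular neighborhoods) that should be cited explicitly rather than described as ``essentially'' giving what you need, and it makes the first two-thirds of your proposal dead weight. You should either carry out the skeletal patching concretely (which is delicate precisely because the target is not convex and the cone retractions do not cover full balls), or discard the cone construction and present the regular-neighborhood argument cleanly. The paper's Almgren-criterion route is shorter and more self-contained because it never leaves $X$ and therefore never needs an ambient collapsible neighborhood.
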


\begin{proof}
Let $X\subset\R^n$ be a finite simplicial complex. By  \cite[Theorem 1.2]{Alm62}, it is enough to show that there exist $C>0$ and $\varepsilon>0$ such that every Lipschitz map $f: S^r\to X$ with image in an $\varepsilon$-ball admits a Lipschitz extension $\bar{f}: B^{r+1}\to X$ whose Lipschitz constant is bounded by $C$ times the Lipschitz constant of $f$.

We start with the following auxiliary construction.
 Let $\Sigma\subset\R^n$ be a $k$-simplex and $F\subset\Sigma$ an $\ell$-face, possibly $F=\Sigma$. We can write $\Sigma = [v_0,\dots, v_k]$ and $F = [v_0,\dots, v_\ell]$. Set $$\Sigma_F := \left\{\sum_{i=0}^k t_i v_i \in\Sigma: \sum_{i=0}^\ell t_i \geq 2^{-1}\right\}$$ and define $$\psi_F^\Sigma: [0,1]\times\Sigma_F \to \Sigma_F$$ by $$\psi_F^\Sigma(s, x) = sx + (1-s)\left(\sum_{i=0}^\ell t_i\right)^{-1} \sum_{i=0}^\ell t_i v_i,$$
 where $x = \sum_{i=0}^k t_iv_i$. It is clear that the following properties hold:
 \begin{enumerate}
  \item $\psi_F^\Sigma(1, x) = x$ and  $\psi_F^\Sigma(0,x)\in F$ for all $x\in\Sigma_F$;
  \item $\psi_F^\Sigma(s,x) = x$ for all $x\in F$ and $s\in[0,1]$;
  \item $\psi_F^\Sigma$ is Lipschitz with constant depending on $\Sigma$;
  \item If $G\subset\Sigma$ is a face with $F\cap G \not=\emptyset$ then $\Sigma_F\cap G = G_{G\cap F}$ and $$\psi_F^\Sigma(s,x) = \psi_{F\cap G}^G(s,x)$$ for all $x\in G_{G\cap F}$ and all $s\in[0,1]$. 
 \end{enumerate}
 
Next, fix a simplex $\Sigma_0$ in $X$ and define a subset $A\subset X$ by
 $$A:= \bigcup\left\{\Sigma_{\Sigma\cap\Sigma_0}: \text{ $\Sigma$ simplex in $X$}\right\}.$$
 There exists $\varepsilon_0>0$ depending only on $X$ such that $$N(\Sigma_0, \varepsilon_0)\cap X \subset A,$$ where $N(\Sigma_0, \varepsilon_0)$ denotes the $\varepsilon_0$-neighborhood of $\Sigma_0$ in $\R^n$. Define a map $$\varphi: [0,1]\times A \to A$$ by $\varphi(s,x) = \psi_{\Sigma\cap\Sigma_0}^\Sigma(s,x)$ for $x\in \Sigma_{\Sigma\cap\Sigma_0}$. By property (iv) above, this map is well-defined, that is, independent of the choice of $\Sigma$. From properties (i) -- (iii) it follows that $\varphi(1,x) = x$ for all $x\in A$, $\varphi(s,x) = x$ for all $x\in \Sigma_0$ and all $s\in[0,1]$, and $\varphi(0,x)\in \Sigma_0$ for all $x\in A$. Moreover, $\varphi$ is ``piecewise Lipschitz".
 
Finally, let $f: S^r\to X$ be a Lipschitz map with image in an $\varepsilon$-ball centered at some $x_0\in X$, where $0<\varepsilon<\varepsilon_0$ is so small that every $\varepsilon$-ball in $X$ is quasi-convex. We will show that $f$ admits a Lipschitz extension $\bar{f}: B^{r+1}\to X$ whose Lipschitz constant is bounded by $C$ times the Lipschitz constant of $f$, where $C$ only depends on $X$. If $r=0$ then this follows immediately from the quasi-convexity of $\varepsilon$-balls. Let therefore $r\geq 1$. Let $\Sigma_0\subset X$ be a simplex such that $x_0\in\Sigma_0$. With the definition of $A$ above, we clearly have $f(S^r)\subset A$. Using $\varphi$ we easily construct a Lipschitz extension $\bar{f}: B^{r+1}\to X$ of $f$ with Lipschitz constant $\Lipconst(\bar{f})\leq C\Lipconst(f)$, where $C$ only depends on $X$. Indeed, let $\varrho:[0,1]\times \Sigma_0\to\Sigma_0$ be a Lipschitz map which contracts $\Sigma_0$ to a point. Define 
 \begin{equation*}
  \bar{f}(sz):=\left\{ \begin{array}{ll}
   \varphi(2s-1, f(z)) & s\in[1/2, 1]\\
   \varrho(2s, \varphi(0, f(z))) & s\in [0,1/2)
  \end{array}\right.
 \end{equation*}
 whenever $z\in S^r$ and $s\in [0,1]$. Then $\bar{f}$ extends $f$ and is Lipschitz with a constant only depending on the ``piecewise" Lipschitz constant of $\varphi$ and the Lipschitz constant of $\varrho$. This proves the claim and thus shows, by \cite[Theorem 1.2]{Alm62}, that $X$ is a local Lipschitz neighborhood retract.
 \end{proof}

We are finally ready to prove Proposition~\ref{prop:isoperimetric}.

\begin{proof}[Proof of Proposition~\ref{prop:isoperimetric}]
 Since $T$ is a polyhedral cycle it follows from Lemma~\ref{lem:LNR-simplicial-complex} that $\spt T$ is a local Lipschitz neighborhood retract. There thus exist $r_1>0$, $\lambda\geq 1$ and a $\lambda$-Lipschitz retraction $\varrho: N(T, r_1) \to\spt T$. Let $y\in\spt T$ and let $u$ be the distance function to the point $y$. By \cite[4.2.1 and 4.3.2]{Fed69}, almost every $s\in(0,r_1/\lambda)$ is such that the slice $$\langle S, u, s\rangle = \bdry(S\rstr\{u\leq s\}) - (\bdry S)\rstr\{u\leq s\}$$ is a normal $m$-current supported in $\{x: u(x)=s\}$. Clearly, $\overline{B}(y,s)\subset N(T, r_1)$ and hence $\langle S, u, s\rangle$ is supported in $N(T,r_1)$. We claim that
 \begin{equation}\label{eq:proj-slice}
  \varrho_{\#}\langle S, u, s\rangle = - T\rstr B(y,s).
 \end{equation}
 In order to see this, set $V:= \varrho_{\#}\langle S, u, s\rangle + T\rstr B(y,s)$ and note first that $$\partial V = \varrho_{\#}(\partial \langle S, u, s\rangle) + \partial(T\rstr B(y,s))=-\varrho_{\#}\langle T, u, s\rangle +  \partial(T\rstr B(y,s)) = 0,$$ hence $V$ is a cycle. Note that $V$ is supported in $\spt T \cap \overline{B}(y, s/\lambda)$. Let $W$ be an $(m+1)$-dimensional normal current with $\partial W =  V$. After possibly projecting $W$ onto $\overline{B}(y,s/\lambda)$ we may assume that $W$ is supported in the ball $\overline{B}(y,s/\lambda)$ and thus in $N(T,r_1)$.  It follows that $\varrho_{\#}W$ satisfies $\partial \varrho_{\#}W = \varrho_{\#}V = V$. Since $\varrho_{\#}W$ is supported in an $m$-dimensional simplicial complex it follows that $\varrho_{\#}W = 0$ and hence that $V = \partial (\varrho_{\#}W) = 0$. This proves \eqref{eq:proj-slice}. 
 
 Now set $t:= 400\lambda$ and $r_0:= r_1/\lambda$. If $0<r<r_0$ and $s\in(r/2, r)$ then it follows that
 \begin{equation*}
  \|\varrho_{\#}(\langle S, u, s\rangle\rstr N(T,t^{-1}r))\|(B(y,99s/100)) = 0
 \end{equation*}
 and thus, with \eqref{eq:proj-slice}, that 
 \begin{equation*}
 \begin{split}
 \|T\|(B(y,99s/100))&= \|\varrho_{\#}(\langle S, u, s\rangle\rstr N(T, t^{-1}r)^c)\|(B(y,99s/100))\\
  &\leq \lambda^m \|\langle S, u, s\rangle\|(N(T, t^{-1}r)^c).
 \end{split}
 \end{equation*}
 Now, integration and the slicing inequality (see \cite[4.2.1]{Fed69} or \cite[Theorem 6.2]{Lan11}) yield
 \begin{equation*}
  \begin{split}
  \frac{r}{2} \|T\|(B(y,99r/200))&\leq \lambda^m \int_{r/2}^r  \|\langle S, u, s\rangle\|(N(T, t^{-1}r)^c) ds\\
   & \leq \lambda^m\|S\|\left((B(y,r)\backslash \overline{B}(y,r/2))\cap N(T, t^{-1}r)^c\right).
  \end{split}
 \end{equation*}
Since $T$ is polyhedral there furthermore exists $D\geq 1$ such that $$\|T\|(B(y,2r))\leq D\|T\|(B(y,99r/200))$$ for every $y\in\spt T$ and every $r>0$. We thus conclude that
 $$Ar \|T\|(B(y,2r)) \leq \|S\|\left((B(y,r)\backslash \overline{B}(y,r/2))\cap N(T, t^{-1}r)^c\right) $$
 for every $y\in\spt T$ and all $0<r<r_0$, where $A:= (2D\lambda^m)^{-1}$. Since $$(B(y,r)\backslash \overline{B}(y,r/2))\cap N(T, t^{-1}r)^c = (B(y,r)\backslash \overline{B}(y,r/2))\cap N(T, r, t^{-1}r)$$ this concludes the proof.
\end{proof}


\section{From Sobolev differential forms to Sobolev cochains}
\label{sec:form}
The aim of this section is to construct a linear map $$\Psi_m: W_d^{q,p}(\R^n,\wedge^m) \to W_{q,p}(\poly{m})$$ for all  $1\leq m\leq n$ whenever  $1\leq q,p< \infty$ or $q=p=\infty$, and to show that $\Psi_m$ is isometric, i.e.\ it preserves norms. 
This construction already appeared in \cite{RW13}. The authors did not show, however, that the map is isometric. In fact, as mentioned earlier, a different but equivalent norm on $W_d^{q,p}(\R^n,\bigwedge^m)$ was used in \cite{RW13}.

\subsection{Construction of $\Psi_m$ when $p,q<\infty$}   \label{subsec:construction cochain}

Let $1\leq p,q<\infty$ and $\omega\in W_d^{q,p}(\R^n,\bigwedge^m)$. There is a sequence of smooth compactly supported $m$-forms $\omega_j$ converging to $\omega$ in $W_d^{q,p}(\R^n,\bigwedge^m)$.
We may assume, of course, that the coefficients $\omega(\cdot,\alpha)$ of $\omega$ and $d\omega(\cdot,\beta)$ of $d\omega$ are Borel functions. Therefore, by Fuglede's lemma (Lemma \ref{lem:Fuglede}), there is a subsequence of $(\omega_j)$, also denoted $(\omega_j)$, such that for every $\alpha\in\Lambda(m,n)$ and every $\beta\in\Lambda(m+1,n)$,

\begin{equation}\label{eq:Lq-conv-omegas}
\int_{\R^n}|\omega_j(\cdot,\alpha)-\omega(\cdot,\alpha)| \, d\| T\| \to 0
\end{equation}
for every $T\in\poly{m} \setminus \Lambda$, where $M_q(\Lambda)=0$, and

\begin{equation}\label{eq:Lp-conv-domegas}
 \int_{\R^n}|d\omega_j(\cdot,\beta)-d\omega(\cdot,\beta)|\, d\| S\| \to 0
\end{equation}
for every $S\in\poly{m+1} \setminus \Gamma$, where $M_p(\Gamma)=0$. 

For $T\in\poly{m}$ and $j\in\N$, define
$$X^{\omega_j}(T):=\int_T \omega_j.$$

Stokes theorem implies that $X^{\omega_j}(\partial S)=X^{d\omega_j}(S)$ and hence that $X^{d\omega_j} = dX^{\omega_j}$, where $dX^{d\omega_j}$ is the coboundary of $X^{\omega_j}$.
%
It is immediate that $X^{\omega_j}$ is an additive cochain. Define $X^\omega:\poly{m}\to\overline\R$ by $X^\omega(T):=\lim_{j\to\infty} X^{\omega_j}(T)$ when the limit exists, and $\infty$ otherwise. Note that, by Fuglede's lemma, the limit exists for $M_q$-almost every $T\in\poly{m}$ and, moreover, a different choice of $(\omega_j)$ satisfying \eqref{eq:Lq-conv-omegas} yields a cochain which is $M_q$-almost everywhere equal to $X^\omega$. It is clear that $X^\omega$ is additive and an element of $\cW_{q,p}(\poly{m})$. In fact, we have the following proposition.

\begin{proposition}\label{prop:Psi-m-isometric}
Let $1\leq m\leq n$ and $1\leq p,q<\infty$. Then the map $$\Psi_m: W_d^{q,p}(\R^n,\wedge^m) \to W_{q,p}(\poly{m})$$ given by $\Psi_m(\omega) = X^\omega$ is linear and isometric. Moreover, if $m<n$, we have
 \begin{equation}\label{eq:psi-commutes-d}
 \Psi_{m+1}\circ d = d\circ \Psi_m,
 \end{equation}
that is, for every $\omega\in W_d^{q,p}(\R^n,\bigwedge^m)$ and every $s\geq 1$ we have $X^{d\omega} = dX^\omega$ as elements of $W_{p,s}(\poly{m+1})$.
 \end{proposition}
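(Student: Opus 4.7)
The plan is to verify linearity, the commutation identity, and the isometric property in turn.

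Linearity of $\Psi_m$ is immediate: the integration pairing $\eta\mapsto\int_T\eta$ on smooth forms is linear, and the pointwise limit defining $X^\omega$ commutes with linear combinations on the set of full $M_q$-measure where all relevant limits exist. For the commutation identity $\Psi_{m+1}\circ d=d\circ\Psi_m$ (which requires $m<n$), I would apply Stokes' theorem to each smooth approximant, obtaining $X^{\omega_j}(\bdry S)=X^{d\omega_j}(S)=\int_S d\omega_j$ for every $S\in\poly{m+1}$; combined with Fuglede's lemma applied to the $L^p$-convergence $d\omega_j\to d\omega$, this gives $X^\omega(\bdry S)=X^{d\omega}(S)$ for $M_p$-a.e.\ $S$, which is the claimed equality in $W_{p,s}(\poly{m+1})$.

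For the inequality $\|\Psi_m(\omega)\|_{q,p}\leq\|\omega\|_{q,p}$ I would use that for any smooth form $\eta$ the comass-mass duality yields $\bigl|\int_T\eta\bigr|\leq\int_T\|\eta(\cdot)\|\,d\|T\|$ on every $T\in\poly{m}$, and the analogous estimate with $d\eta$ follows by Stokes. Thus $\|\eta(\cdot)\|$ is an upper norm and $\|d\eta(\cdot)\|$ an upper gradient of $X^\eta$. Taking $\eta=\omega_j$ and passing to the limit via Fuglede's lemma applied to the $L^q$-convergence $\|\omega_j(\cdot)\|\to\|\omega(\cdot)\|$ and the $L^p$-convergence $\|d\omega_j(\cdot)\|\to\|d\omega(\cdot)\|$ shows that $\|\omega(\cdot)\|$ and $\|d\omega(\cdot)\|$ are a $q$-weak upper norm and a $p$-weak upper gradient of $X^\omega$, respectively, with the required $L^q$- and $L^p$-norms. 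Since for $1<q,p<\infty$ the infima in the Sobolev norm are attained by weak upper objects (as recalled after Fuglede's lemma), this produces the desired inequality.

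The reverse direction $\|\omega\|_{q,p}\leq\|\Psi_m(\omega)\|_{q,p}$ is the main obstacle. The task is to show that every $q$-weak upper norm $h\in L^q(\R^n)$ of $X^\omega$ dominates $\|\omega(\cdot)\|$ pointwise a.e., and similarly that every $p$-weak upper gradient $g\in L^p(\R^n)$ dominates $\|d\omega(\cdot)\|$ a.e. I would fix an orthonormal $m$-frame $(v_1,\ldots,v_m)$ spanning a subspace $V\subset\R^n$, set $\xi=v_1\wedge\cdots\wedge v_m$, and, for $y\in\R^n$ and $r>0$, consider the oriented $m$-parallelepiped $C_{r,y}$ based at $y$ with edges $rv_1,\ldots,rv_m$. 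Lemma~\ref{lem:modulus translates} implies that the $M_q$-null exceptional set for the weak upper norm inequality meets the translates $\{C_{r,y}\}_y$ only in a set of $\lm^n$-measure zero, and a further appeal to Fuglede's lemma (for the $L^q$-convergence of the coefficients of $\omega_j$) identifies $X^\omega(C_{r,y})=\int_{C_{r,y}}\langle\omega,\xi\rangle\,d\haus^m$ for $\lm^n$-a.e.\ $y$. Dividing by $r^m=\mass(C_{r,y})$ and sending $r=r_k\to 0$ along a countable sequence, an $m$-dimensional Lebesgue differentiation argument (obtained from Fubini over $V\oplus V^\perp$ combined with the standard differentiation theorem in $V$) gives $|\langle\omega(y),\xi\rangle|\leq h(y)$ for $\lm^n$-a.e.\ $y$. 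Running the argument over a countable dense family of orthonormal $m$-frames and using continuity of $\xi\mapsto\langle\omega(y),\xi\rangle$ at each fixed $y$ yields $\|\omega(y)\|\leq h(y)$ a.e.; the analogous bound $\|d\omega(y)\|\leq g(y)$ is obtained by repeating the argument with $(m+1)$-parallelepipeds and invoking the commutation identity to view $g$ as a $p$-weak upper norm of $X^{d\omega}=dX^\omega$.
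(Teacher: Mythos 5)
Your proposal is correct and follows essentially the same route as the paper: the easy inequality via comass-mass duality and Fuglede's lemma, the hard inequality via Lebesgue differentiation along translates of small $m$-parallelepipeds with Lemma~\ref{lem:modulus translates} controlling the exceptional set, and the commutation identity via Stokes' theorem plus Fuglede. The one place the paper is slightly more explicit is in establishing the identification $X^\omega({\varphi_{\tau,x}}_\#\dblebracket{\chi_\Delta})=\int_\Delta\langle\omega\circ\varphi_{\tau,x},\tau\rangle\,d\lm^m$ for a.e.\ $x$: there one must reconcile the pointwise limit that defines $X^\omega$ with a Fubini/Fuglede-type subsequence argument identifying its value, which requires both Lemma~\ref{lem:modulus translates} (to know the full-sequence limit exists for a.e.\ translate) and the slice-wise $L^q$-convergence of $\langle\omega_j,\tau\rangle$ (to compute it); you cite both ingredients, so this is only a matter of how much detail is spelled out.
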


\begin{proof}
Let $\omega\in W_d^{q,p}(\R^n,\bigwedge^m)$. Observe that the functions $\|\omega(x)\|$ and $\|d\omega(x)\|$ are in $L^q(\R^n)$ and $L^p(\R^n)$, respectively, where $\|\cdot \|$ denotes comass. It is not difficult to check that $\|\omega(x)\|$ is a $q$-weak upper norm of $X^\omega$ and $\|d\omega(x)\|$ is a $p$-weak upper gradient of $X^\omega$. Indeed, let $(\omega_j)$ be a sequence of smooth compactly supported $m$-forms converging to $\omega$ in $W_d^{q,p}(\R^n,\bigwedge^m)$ and satisfying \eqref{eq:Lq-conv-omegas} and \eqref{eq:Lp-conv-domegas}. Let $T\in\poly{m}\setminus \Lambda$, where $\Lambda$ is as above. Write $T=\sum a_i T_i$ as in Section \ref{subsec:polyhedral chains} and let $\tau_i$ be the simple unit $m$-vector orienting $T_i$. It follows that 
\begin{equation*}
 \begin{split}
  |X^{\omega_j}(T)| & \leq \sum_i |a_i| \int_{T_i} |\langle \omega_j, \tau_i\rangle| \,d\haus^m\\
  &\leq \sum_i|a_i|\int_{T_i} |\langle \omega, \tau_i\rangle|\,d\haus^m + \sum_i|a_i|\int_{T_i} |\langle \omega_j - \omega, \tau_i\rangle|\,d\haus^m.
 \end{split}
\end{equation*}
By \eqref{eq:Lq-conv-omegas}, each term in the second sum in the last line converges to $0$ as $j\to\infty$ and hence we obtain that $$|X^\omega(T)| \leq \sum_i|a_i|\int_{T_i} |\langle \omega, \tau_i\rangle|\,d\haus^m\leq \int_{\R^n} \|\omega(x)\|\,d\|T\|(x).$$ This shows that $\|\omega(x)\|$ is a $q$-weak upper norm of $X^\omega$. One shows analogously that $\|d\omega(x)\|$ is a $p$-weak upper gradient of $X^\omega$. Consequently, we have that 
\begin{equation}\label{eq:ineq-omega-Xomega}
\|X^\omega \|_{q,p} \leq \|\omega\|_{q,p}.
\end{equation}
We claim that equality holds in \eqref{eq:ineq-omega-Xomega}. For this let $h\in L^q(\R^n)$ be an upper norm of $X^\omega$. Fix $\tau = \xi_1\wedge\dots\wedge \xi_m$, where $\xi_1, \dots, \xi_m\in\R^n$ are pairwise orthonormal, and define for each $x\in\R^n$ a map  $\varphi_{\tau, x}:\R^m\to\R^n$ by $\varphi_{\tau, x}(z):= x+ \sum_{i=1}^m z_i \xi_i$ for $z=(z_1,\dots, z_m)\in\R^m$. We will show that for almost every $x\in\R^n$ and every polyhedron $\Delta$, we have 
\begin{equation}\label{eq:rep-Xomega-by-omega}
X^\omega({\varphi_{\tau, x}}_\#\Lbrack \chi_{\Delta}\Rbrack) = \int_{\Delta} \langle \omega\circ\varphi_{\tau, x}(z), \tau\rangle \,dz,
\end{equation}
where $\dblebracket{\chi_{\Delta}}$ denotes the polyhedral $m$-chain in $\R^m$ induced by the simple function $\chi_{\Delta}$.
By Lemma \ref{lem:modulus translates}, $X^{\omega_j}({\varphi_{\tau, x}}_\#\Lbrack \chi_{\Delta}\Rbrack)$ converges as $j\to\infty$ for almost every $x\in\R^n$. Let $V_\tau\subset\R^n$ be the span of the vectors $\xi_1,\dots, \xi_m$ and let $V_\tau^\perp\subset\R^n$ denote the orthogonal complement. Since $\omega_j(\cdot, \alpha)$ converges in $L^q(\R^n)$ to $\omega(\cdot, \alpha)$ for every $\alpha$ it follows that $\langle \omega_j, \tau\rangle$ converges in $L^q(\R^n)$ to $\langle \omega, \tau\rangle$. From this we obtain that there exists a subsequence $(\omega_{j_k})$ such that $\langle \omega_{j_k}\circ\varphi_{\tau, y}, \tau\rangle$ converges in $L^q(\R^m)$ to $\langle \omega\circ\varphi_{\tau, y}, \tau\rangle$ for almost every $y\in V_\tau^\perp$. In particular, for all such $y$ and every $z_0\in \R^m$ we have 
 $$X^{\omega_{j_k}}({\varphi_{\tau, y}}_\#\Lbrack \chi_{z_0+\Delta}\Rbrack) = \int_{z_0+\Delta}\langle \omega_{j_k}\circ\varphi_{\tau, y}(z), \tau\rangle\,dz \longrightarrow  \int_{z_0+\Delta} \langle \omega\circ\varphi_{\tau, y}(z), \tau\rangle \,dz$$ as $k\to\infty$. This together with the above implies \eqref{eq:rep-Xomega-by-omega}.
Now, since $h\in L^q(\R^n)$ it follows that $h\circ\varphi_{\tau, y}$ is in $L^q(\R^m)$ for almost every $y\in V_\tau^\perp$ and thus from Lebesgue differentiation theorem that for almost every $x\in\R^n$ 
\begin{equation*}
 \frac{1}{r^m}\int_{[0,r]^m}h\circ\varphi_{\tau, x}(z)\, dz\longrightarrow h(x)
\end{equation*}
as $r\to 0^+$. Analogously, we have that for almost every $x\in\R^n$
\begin{equation*}
  \frac{1}{r^m}\int_{[0,r]^m}\langle \omega\circ\varphi_{\tau, x}(z), \tau\rangle\, dz \longrightarrow \langle \omega(x), \tau\rangle
\end{equation*}
as $r\to0^+$. Fix a sequence $(r_k)$ with  $r_k\to 0^+$. Then from the above combined with \eqref{eq:rep-Xomega-by-omega} and the upper norm inequality we obtain that for almost every $x\in\R^n$ we have
\begin{equation*}
 \begin{split}
  |\langle \omega(x), \tau\rangle| & = \left|\lim_{k\to\infty} \frac{1}{r_k^m}\int_{[0,r_k]^m} \langle \omega\circ\varphi_{\tau, x}(z), \tau\rangle\,dz\right|\\
   &= \lim_{k\to\infty} \frac{1}{r_k^m}\left|X^\omega({\varphi_{\tau, x}}_\#\Lbrack \chi_{[0,r_k]^m}\Rbrack)\right|\\
   &\leq \liminf_{k\to\infty} \frac{1}{r_k^m} \int_{[0,r_k]^m}h\circ\varphi_{\tau, x}(z)\,dz\\
   &= h(x).
 \end{split}
\end{equation*}
Finally, let $\{\tau_k\}$ be a countable dense set of simple unit $m$-vectors. It follows from the above that for almost every $x\in\R^n$ we have $|\langle \omega(x), \tau_k\rangle| \leq h(x)$ for all $k$ and this shows that $\|\omega(x)\|\leq h(x)$ for almost every $x\in\R^n$. An analogous argument shows that if $g\in L^p(\R^n)$ is an upper gradient of $X^\omega$ then $\|d\omega(x)\|\leq g(x)$ for almost every $x\in\R^n$. This proves that $\|\omega\|_{q,p}\leq \|X^\omega\|_{q,p}$ and hence
equality holds in \eqref{eq:ineq-omega-Xomega} for all $\omega\in W_d^{q,p}(\R^n,\bigwedge^m)$. 

It remains to prove \eqref{eq:psi-commutes-d}. By Stokes' theorem, we have for every $j$ and every $S\in\poly{m+1}$,
$$X^{d\omega_j}(S)=dX^{\omega_j}(S).$$
By Fuglede's lemma, $X^{d\omega_j}(S) \to X^{d\omega}(S)$ for $M_p$-almost every $S\in \poly{m+1}$. Moreover, by definition, $X^{\omega}(\bdry S)=\lim X^{\omega_j}(\bdry S)$ when the limit exists. In particular, $dX^{\omega}(S)=\lim dX^{\omega_j}(S)$ for $M_p$-almost every $S\in\poly{m+1}$. This proves that for $M_p$-almost every $S\in\poly{m+1}$ we have
$$dX^\omega(S)=X^{d\omega}(S)$$
and hence that \eqref{eq:psi-commutes-d} holds.
\end{proof}

The following consequence of \eqref{eq:rep-Xomega-by-omega} will be used later (recall the notation $\polypar{m}$ from Section 
\ref{subsec:polyhedral chains}).

\begin{remark}   \label{rk:cochain from form}
For every $T\in\polypar{m}$ and almost every $x\in\R^n$, we have 
$$X^\omega({\varphi_x}_\#T)=\int_{{\varphi_x}_\#T} \omega.$$
Similarly, for every $S\in\polypar{m+1}$ and almost every $x\in\R^n$, 
$$X^\omega({\varphi_x}_\#\bdry S)=\int_{{\varphi_x}_\#S} d\omega.$$
\end{remark}

\subsection{Construction of $\Psi_m$ when $q=p=\infty$}  \label{sec:construction Psi case infinity}

Let $\omega \in W_d^{\infty,\infty}(\R^n,\bigwedge^m)$. We first assume that $\omega$ is compactly supported. Let $n-m+1< s <\infty$. Then clearly, $\omega$ is in $W_d^{s,s}(\R^n,\bigwedge^m)$. Let $X^\omega$ be the additive cochain in $\cW_{s,s}(\poly{m})$ induced by $\omega$, constructed as in Section \ref{subsec:construction cochain}. It follows that $\| \omega(\cdot)\|$ is an $s$-weak upper norm and that $\| d\omega(\cdot)\|$ is an $s$-weak upper gradient of $X^\omega$. It follows from Lemma \ref{lem:modulus translates} that for every $r>0$ the constant functions $\| \omega\|_\infty$ and $\| d\omega\|_\infty$ are an upper norm and an upper gradient of the averaged cochain
$$X_r^{\omega}(T)=\fint_{B(0,r)} X^{\omega}({\varphi_x}_\# T)dx,$$
respectively. The same is true for the cochain $X^\omega$ since $X_r^\omega(T)$ converges to $X^\omega(T)$ as $r\to 0$ for every $T\in\poly{m}$ by Theorem \ref{thm:smoothening}. In particular, we obtain that $X^\omega\in \cW_{\infty,\infty}(\poly{m})$ and $\| X^\omega\|_{\infty,\infty} \leq \| \omega\|_{\infty,\infty}$. The same proof as in Section \ref{subsec:construction cochain} shows that, in fact, $\| X^\omega\|_{\infty,\infty} = \| \omega\|_{\infty,\infty}$.

We now turn to the case where $\omega\in W_d^{\infty,\infty}(\R^n,\bigwedge^m)$ is not assumed to be compactly supported.  For $k\in \N$, let $\varphi_k$ be a smooth compactly supported function with the following properties: $\varphi_k$ takes values between $0$ and $1$, equals $1$ on $B(0,k)$, and $|\nabla \varphi_k|$ is bounded by $1/k$. Then the form $\omega^k:= \varphi_k \omega$ is in $W_d^{\infty,\infty}(\R^n,\bigwedge^m)$ and therefore gives rise to an additive cochain $X^{\omega^k}\in \cW_{\infty,\infty}(\poly{m})$ with $\| X^{\omega^k}\|_{\infty,\infty}=\| \omega^k\|_{\infty,\infty}$, by the paragraph above. Let $T\in\poly{m}$ and let $k$ be large enough so that $T$ is supported in $B(0,k-2)$. By Fuglede's lemma and Lemma \ref{lem:modulus translates}, then $X^{\omega^k}({\varphi_x}_\#T)=X^{\omega^\ell}({\varphi_x}_\# T)$ for all $\ell\geq k$ and almost every $x\in B(0,1)$. By Theorem \ref{thm:smoothening}, we obtain that $X^{\omega^k}(T)=X^{\omega^\ell}(T)$ for every $T$ supported in $B(0,k-2)$ and every $\ell\geq k$. We can thus define $X^\omega(T):=\lim_{k\to\infty} X^{\omega^k}(T)$ for all $T\in\poly{m}$. This clearly yields an additive cochain in $\cW_{\infty,\infty}(\poly{m})$. Since $\| \omega^k\|_{\infty,\infty} \to \| \omega\|_{\infty,\infty}$ we clearly get that $\|X^\omega\|_{\infty,\infty} \leq \| \omega\|_{\infty,\infty}$. We eventually have that $\| X^\omega\|_{\infty,\infty}=\| \omega\|_{\infty,\infty}$ by the same proof as in Section \ref{subsec:construction cochain}.

\begin{remark}
We note here that Remark \ref{rk:cochain from form} remains true in the case $q=p=\infty$.
\end{remark}


 \section{From Sobolev cochains to Sobolev differential forms}
 \label{sec:cochain}
In this section, we construct a continuous linear map $$\Phi_m:W_{q,p}(\poly{m})\to W_d^{q,p}(\R^n,\Lambda^m)$$ for all $1\leq m\leq n$ and $1<p,q\leq \infty$. Note that no restrictions will be put on $p$ and $q$ other than $p,q>1$.
%

Let $X \in \cW_{q,p}(\poly{m})$ be an additive cochain with upper norm $h\in L^q(\R^n)$ and upper gradient $g\in L^p(\R^n)$. For $y\in \R^n$ and $\alpha\in \Lambda(m,n)$, define a map $\varphi_{\alpha,y}:\R^m \to \R^n$ by
$$\varphi_{\alpha,y}(x)=y+\sum_{i=1}^m x_i e_{\alpha(i)}.$$
Fix $\alpha\in \Lambda(m,n)$ and write $\R^n=V_\alpha + V_\alpha^{\perp}$, where $V_\alpha=\span \{ (e_{\alpha(i)})_i \}$ and where $V_\alpha^\perp$ denotes the orthogonal complement of $V_\alpha$. Moreover, fix $y\in V_\alpha^{\perp}$ such that $\| h\circ \varphi_{\alpha,y}\|_q <\infty$. The coefficient of the differential form in the direction $\alpha$ will be defined at almost every point of the $m$-plane $\varphi_{\alpha,y}(\R^m)$. 

Let $\mathcal{S}_{bs}(\R^m)$ be the space of simple functions $\xi$ on $\R^m$ such that the level sets of $\xi$ are (bounded) polyhedral sets in $\R^m$. Then $\mathcal{S}_{bs}(\R^m)$ is a vector subspace of $L^{q'}(\R^m)$,  where $q'\in[1, \infty)$ is such that $\frac{1}{q}+\frac{1}{q'}=1$.
Define
 $$\begin{array}{ccc}
 \xi: \mathcal{S}_{bs}(\R^m) & \to & \overline\R \\
 \theta & \mapsto & X({\varphi_{\alpha,y}}_\#\dblebracket{\theta}), 
 \end{array}$$
 where $\dblebracket{\theta}$ denotes the polyhedral $m$-chain in $\R^m$ induced by the simple function $\theta$.
 We have
 \begin{eqnarray*}
 |\xi(\theta)| &\leq& \int_{\R^n} h(x) \dd{{\varphi_{\alpha,y}}_\# \dblebracket{\theta}}(x)\\
 &=& \int_{\R^m} h(\varphi_{\alpha,y}(z)) \dd{\dblebracket{\theta}}(z)\\
 &=&\int_{\R^m} |\theta| \cdot h\circ \varphi_{\alpha,y} \ d\lm^m\\
 &\leq& \| \theta \|_{q'} \cdot \| h\circ \varphi_{\alpha,y}\|_q <\infty.
 \end{eqnarray*}
 Thus the function $\xi$ has values in $\R$ and is hence additive (note that additivity property in Definition~\ref{def:cochain} only applies when all terms are finite). It follows that $\xi$ is $\mathbb{Q}$-linear and thus, by the above inequality, that $\xi$ is $\R$-linear. By the Hahn-Banach extension theorem, there exists a continuous linear functional $\overline\xi:L^{q'}(\R^m)\to\R$ such that $\overline{\xi}|_{\mathcal{S}_{bs}(\R^m)}=\xi$ and $\| \overline\xi \|_{(L^{q'})^*} \leq \| h\circ \varphi_{\alpha,y}\|_q$. Then there exists $\lambda\in L^q(\R^m)$ such that
 $$\overline\xi(\theta)=\int_{\R^m}\lambda \cdot \theta\ d{\lm^m}$$
and, in particular,
 \begin{equation} \label{eq:differentiation}
 \frac{1}{r^m} X({\varphi_{\alpha,y}}_\# \dblebracket{ \chi_{z+[0,r]^m}}) = \frac{1}{r^m} \int_{z+[0,r]^m} \lambda\ d{\lm^m}
 \end{equation}
 for every $z\in\R^m$ and all $r>0$.
By the Lebesgue differentiation theorem, the limit as  $r\to 0^+$ of the quantity in (\ref{eq:differentiation}) exists for almost every $z\in\R^m$. For such $z$, define
 $$\omega^X(\varphi_{\alpha,y}(z),\alpha):=\lim_{r\to 0^+} \frac{1}{r^m} X({\varphi_{\alpha,y}}_\# \dblebracket{ \chi_{z+[0,r]^m}}).$$
 Consequently, $\omega^X(x,\alpha)$ exists for almost every $x \in \R^n$, 
 $$\omega^X(x,\alpha)=\lim_{r\to 0^+} \frac{1}{r^m} X({\varphi_{\alpha,x}}_\# \dblebracket{ \chi_{[0,r]^m}})$$
 and since
 $$\frac{1}{r^m} \left|X({\varphi_{\alpha,x}}_\# \dblebracket{ \chi_{[0,r]^m}})\right|\leq \frac{1}{r^m} \int_{[0,r]^m} h\circ \varphi_{\alpha,x} \,d{\lm^m},$$
 we have
 \begin{equation}  \label{eq:norm bound diff form}
 |\omega^X(x,\alpha)| \leq h(x)
 \end{equation}
 for almost every $x\in\R^n$.
 
If $m\leq n-1$ we can define similarly 
 \begin{equation}   \label{eq:d-omega-beta}
 d\omega^X(x,\beta):= \lim_{r\to 0^+} \frac{1}{r^{m+1}} X({\varphi_{\beta,x}}_\# \bdry \dblebracket{\chi_{[0,r]^{m+1}}})
 \end{equation}
 for every $\beta\in \Lambda(m+1,n)$ and for almost every $x\in \R^n$.
 It follows as above that 
 \begin{equation}  \label{eq:norm bound derivative diff form}
 |d\omega^X(x,\beta)|\leq g(x)
 \end{equation}
 for almost every $x\in\R^n$.
 Define, for almost every $x\in \R^n$,
 \begin{equation}\label{eq:def-omega}
  \omega^X(x):=\sum_{\alpha\in \Lambda(m,n)} \omega^X(x,\alpha)dx^\alpha
 \end{equation}
 and
 \begin{equation}\label{eq:def-d-omega}
  d\omega^X(x):=\sum_{\beta\in \Lambda(m+1,n)} d\omega^X(x,\beta)dx^\beta.
 \end{equation}
 
It follows from the definition of $\omega^X$ and inequality \eqref{eq:norm bound diff form} that an additive cochain in the same equivalence class as $X$ yields a differential form which is in the same equivalence class as $\omega$. We now prove the following:

\begin{lemma}\label{lem:distr-deriv-omega-X}
 If $m\leq n-1$ then the distributional exterior derivative of the $m$-form $\omega^X$ defined in \eqref{eq:def-omega} is given by the $(m+1)$-form $d\omega^X$ defined in \eqref{eq:def-d-omega}, that is,
$$\int_{\R^n} d\omega^X\wedge \nu = (-1)^{m+1} \int_{\R^n} \omega^X \wedge d\nu$$
for every smooth compactly supported $(n-m-1)$-form $\nu$.
\end{lemma}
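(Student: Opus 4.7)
The plan is to reduce to a coordinate identity via Fubini, then derive that identity from a discrete summation-by-parts applied to the additive cochain $X$. First I would use linearity to reduce to test forms of the special shape $\nu = \eta\, dx^\gamma$, with $\gamma \in \Lambda(n-m-1,n)$ and $\eta$ smooth and compactly supported. Expanding the wedge products $\omega^X\wedge d\nu$ and $d\omega^X \wedge \nu$, only the multi-indices $\beta$ complementary to $\gamma$ (for $d\omega^X$) and $\alpha^i := (\beta_1,\dots,\widehat{\beta_i},\dots,\beta_{m+1})$ together with $dx^{\beta_i}$ (for $\omega^X\wedge d\eta\wedge dx^\gamma$) contribute. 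Up to a common sign, the claim therefore reduces to
\begin{equation*}
\int_{\R^n} d\omega^X(x,\beta)\,\eta(x)\,dx \;=\; \sum_{i=1}^{m+1} (-1)^{i+1}\int_{\R^n} \omega^X(x,\alpha^i)\,\partial_{\beta_i}\eta(x)\,dx.
\end{equation*}

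Next I would use Fubini to split $\R^n = V_\beta \oplus V_\beta^\perp$ and prove the corresponding identity on almost every affine slice $y+V_\beta$, $y\in V_\beta^\perp$. Say a slice is \emph{good} if $h\circ\varphi_{\beta,y}\in L^q(V_\beta)$, $g\circ\varphi_{\beta,y}\in L^p(V_\beta)$, the Lebesgue differentiation formulas defining $\omega^X(\cdot,\alpha^i)$ and $d\omega^X(\cdot,\beta)$ hold for a.e.\ point of the slice, and the relevant averages over cubes centered in the slice are finite; standard density/Fubini arguments (together with \eqref{eq:norm bound diff form}--\eqref{eq:norm bound derivative diff form} and Lemma~\ref{lem:modulus translates}) show that almost every $y$ is good. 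Fix such a $y$.

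On the slice I would use a dyadic discretization. For $r>0$ and $z$ in the lattice $r\Z^{m+1}\subset V_\beta$, write $Q_z^r := {\varphi_{\beta,y+z}}_\#\dblebracket{\chi_{[0,r]^{m+1}}}$; its boundary decomposes as $\partial Q_z^r = \sum_{i=1}^{m+1}(-1)^{i+1}(F_{z,i}^+ - F_{z,i}^-)$, where $F_{z,i}^\pm$ are the two $m$-faces perpendicular to $e_{\beta_i}$, and $F_{z+re_{\beta_i},i}^- = F_{z,i}^+$ as polyhedral chains. Because $\eta$ is compactly supported only finitely many terms are nontrivial, so additivity of $X$ (all relevant values are finite by the upper-norm bound on the good slice) together with a standard discrete summation by parts gives
\begin{equation*}
\sum_z \eta(y+z)\, X(\partial Q_z^r) \;=\; \sum_{i=1}^{m+1}(-1)^{i+1}\sum_z \bigl(\eta(y+z)-\eta(y+z+re_{\beta_i})\bigr)\, X(F_{z,i}^+).
\end{equation*}
Multiplying the left-hand side by $r^{-(m+1)}\cdot r^{m+1}$ and the right-hand side by $r^{-1}\cdot r\cdot r^{-m}\cdot r^{m}$, the ratios $r^{-(m+1)}X(\partial Q_z^r)$ and $r^{-m}X(F_{z,i}^+)$ are precisely the averaged quantities converging a.e.\ to $d\omega^X(\cdot,\beta)$ and $\omega^X(\cdot,\alpha^i)$, while $r^{-1}(\eta(\cdot+re_{\beta_i})-\eta(\cdot))\to \partial_{\beta_i}\eta$ uniformly.

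Finally, passing to the limit $r\to 0$ along a subsequence yields the slice identity: the two sides are Riemann sums of continuous integrands, dominated uniformly in $r$ by $\|\eta\|_\infty\cdot g\circ\varphi_{\beta,y}$ and $\|\nabla\eta\|_\infty\cdot h\circ\varphi_{\beta,y}$ respectively, so dominated convergence applies. Integrating over $y\in V_\beta^\perp$ and reassembling the signs produces the distributional identity. The main obstacle I anticipate is the rigorous bookkeeping for the passage to the limit: one must verify that the pointwise Lebesgue-differentiation limits along the cube grid coincide a.e.\ with $\omega^X$ and $d\omega^X$ (not only along the sequence of shrinking cubes based at a single point), and that the dominating functions obtained by the upper-norm/upper-gradient bounds are indeed integrable on the slice for a.e.\ $y$; both are handled by Fubini combined with the bounds \eqref{eq:norm bound diff form} and \eqref{eq:norm bound derivative diff form} and a standard subsequence/diagonalization argument in $r$.
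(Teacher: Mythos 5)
Your approach is genuinely different from the paper's: you discretize the slice into a lattice of $r$-cubes and perform a \emph{discrete} summation by parts relating $X(\partial Q^r_z)$ to the face values $X(F^+_{z,i})$, then try to pass to the limit; the paper instead keeps the \emph{continuous} integral over $\R^n$, writes $\partial_{\beta(i)}f$ as a difference quotient, and performs a change of variables $x\mapsto x+re_{\beta(i)}$ inside $\int_{\R^n}\cdots f(x)\,dx$ to assemble the boundary of a cube in the integrand. The continuous route has the advantage that the quantity $r^{-(m+1)}X\bigl({\varphi_{\beta,x}}_\#\bdry\dblebracket{\chi_{[0,r]^{m+1}}}\bigr)$ sits inside an honest integral $\int_{\R^n}\cdots f(x)\,dx$ with a \emph{fixed} integration variable $x$, so a.e.\ convergence in $x$ plus a maximal-function dominant gives the limit directly by dominated convergence.

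The gap in your proposal is precisely at the point you flag. After summation by parts you have a lattice sum $\sum_{z\in r\Z^{m+1}}$, and you assert that its two sides ``are Riemann sums of continuous integrands.'' They are not: the weights $\eta(y+z)$ and the difference quotients of $\eta$ are continuous, but the factors $r^{-(m+1)}X(\partial Q^r_z)$ and $r^{-m}X(F^+_{z,i})$ are not continuous functions of the base point --- they only converge for \emph{fixed} base point and a.e.\ base point, while in your sum the base points $y+z$ move with $r$. Pointwise a.e.\ Lebesgue differentiation at a fixed corner, together with a uniform $L^1$ dominant, does not by itself show that a Riemann sum over an $r$-dependent lattice of an $L^q$ (not continuous) density converges to the desired integral. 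To close this you would have to convert the lattice sum into an integral first (e.g.\ by writing it as $\int \eta(y+\lfloor w\rfloor_r)\,r^{-(m+1)}X(\partial Q^r_{\lfloor w\rfloor_r})\,dw$ over the slice and then using the Hahn--Banach representation $X(\partial Q) = \int_Q \lambda$ with $\lambda\in L^p$ on the slice, together with Lebesgue differentiation for the regular family of cubes containing a point, not just cubes cornered at the point), or simply adopt the paper's change-of-variables argument in the continuous integral. A ``subsequence/diagonalization in $r$'' does not remove the difficulty, since the defect is not a failure of pointwise convergence but the mismatch between a.e.\ pointwise limits and lattice sums with moving sample points. Your sign and boundary bookkeeping (the decomposition $\partial Q^r_z=\sum_i(-1)^{i+1}(F^+_{z,i}-F^-_{z,i})$ and the cancellation $F^-_{z+re_{\beta_i},i}=F^+_{z,i}$) is correct, as is the Fubini reduction to slices; the issue is only the final limiting step.
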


This lemma shows that $\omega^X$ is in $W_d^{q,p}(\R^n,\bigwedge^m)$ and hence the map $$\Phi_m:W_{q,p}(\poly{m})\to W_d^{q,p}(\R^n,\wedge^m)$$ given by $\Phi_m(X)=\omega^X$ is linear and satisfies
$$\| \Phi_m(X) \|_{q,p}\leq C \| X \|_{q,p}$$ 
for all $X\in W_{q,p}(\poly{m})$, where $C>0$ depends only on $m$ and $n$.

\begin{proof}[Proof of Lemma~\ref{lem:distr-deriv-omega-X}]
Fix a smooth compactly supported simple $(n-m-1)$-form $\nu=f dx^\gamma$, $\gamma\in \Lambda(n-m-1,n)$, and let $\beta\in \Lambda(m+1,n)$  be such that $$e_{\beta(1)}\wedge \cdots \wedge e_{\beta(m+1)}\wedge e_{\gamma(1)}\wedge \cdots \wedge e_{\gamma(n-m-1)}=(-1)^k e_1\wedge \cdots \wedge e_n$$ for some $k$.
For $i=1,\dots,m+1$, let $\alpha^i\in \Lambda(m,n)$ be such that $\alpha^i(j)=\beta(j)$ for $j\in \{ 1,\dots,i-1\}$ and $\alpha^i(j)=\beta(j+1)$ for $j\in \{ i,\dots,m\}$.
With this notation, we have $dx^{\alpha^i}\wedge dx^{\beta(i)}=(-1)^{m-i+1}dx^\beta$, thus $$dx^{\alpha^i}\wedge dx^{\beta(i)} \wedge dx^\gamma=(-1)^{k+m-i+1} dx^1\wedge\dots\wedge dx^n$$ and therefore
$$\int_{\R^n} \omega^X \wedge d\nu= \sum_{i=1}^{m+1} (-1)^{k+m-i+1} \int_{\R^n} \omega^X(x,\alpha^i) \frac{\partial f}{\partial x_{\beta(i)}}(x) \,dx.$$
We can now write 
\begin{equation*}
\begin{split}
\int_{\R^n}& \omega^X \wedge d\nu\\
&=\sum_{i=1}^{m+1} (-1)^{k+m-i+1} \int_{\R^n} \lim_{r\to 0^+} \frac{X({\varphi_{\alpha^i,x}}_\# \dblebracket{\chi_{[0,r]^m}})}{r^m} \cdot \frac{\partial f}{\partial x_{\beta(i)}}(x)\, dx\\
&=\sum_{i=1}^{m+1} (-1)^{k+m-i+1} \int_{\R^n} \lim_{r\to 0^+} \frac{X({\varphi_{\alpha^i,x}}_\# \dblebracket{\chi_{[0,r]^m}})}{r^m} \cdot \frac{f(x)-f(x-re_{\beta(i)})}{r} \,dx\\
&=\sum_{i=1}^{m+1} (-1)^{k+m-i+1}\lim_{r\to 0^+}  \int_{\R^n}  \left(\frac{X({\varphi_{\alpha^i,x}}_\# \dblebracket{\chi_{[0,r]^m}})}{r^{m+1}} - \frac{X({\varphi_{\alpha^i,x+re_{\beta(i)}}}_\#  \dblebracket{\chi_{[0,r]^m}})}{r^{m+1}} \right) f(x)\,dx.\\
\end{split}
\end{equation*}
Here, the limit can be taken outside the integral by the Lebesgue dominated convergence theorem and the maximal function theorem. Indeed,  writing $x\in\R^n$ uniquely as $x= y+z$ with $y\in V_\alpha^\perp$ and $z\in V_\alpha= \varphi_{\alpha, 0}(\R^m)$ we have
$$
\left| \frac{X({\varphi_{\alpha^i,x}}_\# \dblebracket{\chi_{[0,r]^m}})}{r^m} \right| \leq C M(h\circ\varphi_{\alpha, y})(\varphi_{\alpha, 0}^{-1}(z)) 
$$
for all $r >0$, where $M(h\circ\varphi_{\alpha, y})$ is the Hardy-Littlewood maximal function of $h\circ\varphi_{\alpha, y}$ and where $C$ is a constant only depending on $m$. If $q<\infty$ then, by the maximal function theorem and Fubini theorem, we have
$$\int_{V_\alpha^\perp}\int_{V_\alpha}M(h\circ\varphi_{\alpha, y})^q(\varphi_{\alpha, 0}^{-1}(z))\,dz\,dy \leq C \int_{V_\alpha^\perp}\int_{\R^m} h^q\circ\varphi_{\alpha, y}(z)\,dz\,dy = C \|h\|_q^q<\infty$$ for some constant $C$ and so the map $x=y+z\mapsto M(h\circ\varphi_{\alpha, y})(\varphi_{\alpha, 0}^{-1}(z))$ is in $L^q(\R^n)$. If $q=\infty$ then we obtain similarly that the map $x=y+z\mapsto M(h\circ\varphi_{\alpha, y})(\varphi_{\alpha, 0}^{-1}(z))$ is in $L^\infty(\R^n)$.

Now, observe that
$${\varphi_{\beta,x}}_\# \bdry\dblebracket{\chi_{[0,r]^{m+1}}}=\sum_{i=1}^{m+1} (-1)^i \left( {\varphi_{\alpha^i,x}}_\#\dblebracket{\chi_{[0,r]^m}}-{\varphi_{\alpha^i,x+re_{\beta(i)}}}_\#\dblebracket{\chi_{[0,r]^m}} \right).$$
This shows that
$$\int_{\R^n} \omega^X \wedge d\nu=(-1)^{k+m+1} \lim_{r\to 0} \int_{\R^n}  \frac{X({\varphi_{\beta,x}}_\#\bdry\dblebracket{\chi_{[0,r]^{m+1}}})}{r^{m+1}} f(x) \, dx=(-1)^{m+1} \int_{\R^n} d\omega^X\wedge \nu, $$
 where we again use dominated convergence as above, replacing the upper norm $h$ with the upper gradient $g$.
 \end{proof}

\begin{remark}   \label{rk:form from cochain}
From the construction of the differential form $\omega^X$, it follows that for all $\alpha\in \Lambda(m,n)$ and all $y\in V_\alpha^\perp$ such that $\| h\circ \varphi_{\alpha,y} \|_q<\infty$, the following holds. For all $T\in\poly{m}$ such that $\spt(T) \subset \varphi_{\alpha,y}(\R^m)$,
 \begin{equation}  \label{eq:omega(T)=T(hatomega)}
X(T)=\int_T \omega^X.
 \end{equation}
 In particular, for all $T\in\polypar{m}$ and for almost every $x\in\R^n$,
$$X({\varphi_x}_\# T)=\int_{{\varphi_x}_\# T} \omega^X,$$
where $\varphi_x$ is defined by $\varphi_x(y)=x+y$. Similarly, for all $S\in\polypar{m+1}$ and for almost every $x\in\R^n$,
$$dX({\varphi_x}_\# S)=\int_{{\varphi_x}_\# S} d\omega^X.$$
\end{remark} 
 

\section{Proof of Theorems~\ref{thm:main result} and \ref{thm:main-result-closed}} \label{sec:proof-main-thm}

We first give the proof of Theorem~\ref{thm:main result}.

\begin{proof}[Proof of Theorem~\ref{thm:main result}]
Let $\Psi_m$ and $\Phi_m$ be the continuous linear maps constructed in Sections~\ref{sec:form} and \ref{sec:cochain}, respectively. By Proposition~\ref{prop:Psi-m-isometric}, the map $\Psi_m$ is isometric. In order to show that $\Psi_m$ is surjective it thus suffices to show that $\Psi_m \circ \Phi_m$ is the identity.
For this fix a cochain $Y \in W_{q,p}(\poly{m})$ and set $Z=\Psi_m(\Phi_m(Y))$. Let $X\in \cW_{q,p}(\poly{m})$ be an additive cochain which is a representative of $Y-Z$. We will show that $\| X\|_{q,p}=0$ and thus that $X$ is zero as an element of $W_{q,p}(\poly{m})$. By Remarks \ref{rk:cochain from form} and \ref{rk:form from cochain}, we know that for all $T\in\polypar{m}$, all $S\in\polypar{m+1}$, and almost every $x\in\R^n$,
$$X({\varphi_x}_\# T)=0 \text{ and } dX({\varphi_x}_\# S)=0.$$
In particular, $X_r$ is zero on $\polypar{m}$ and $dX_r$ is zero on $\polypar{m+1}$ for every $r>0$. We next show that $X_r$ is in fact zero on $\poly{m}$ for every $r>0$. 

For this, fix $T\in\poly{m}$ and let $\varepsilon>0$. The deformation theorem \cite[4.2.9]{Fed69} asserts that there exist $T'\in\polypar{m}$, $S\in\poly{m+1}$ and $R\in\poly{m}$ such that 
$$T=T'+R+\bdry S, $$ 
and 
\begin{equation}   \label{eq:deformation theorem}
\mass(S)\leq \gamma\varepsilon \mass(T), \quad \mass(R)\leq \gamma\varepsilon \mass(\bdry T), 
\end{equation}
where $\gamma$ is a constant only depending on $n$ and $m$. Let $r>0$.
Since $X_r(T')=0$ it follows that 
$$|X_r(T)|\leq |X_r(R)|+|X_r(\bdry S)|.$$
Now recall that $X_r\in W_{\infty,\infty}(\poly{m})$ with upper norm $h_r:=\fint_{B(0,r)}h(\cdot+y) dy$ and upper gradient $g_r:=\fint_{B(0,r)} g(\cdot+y)dy$.
The upper norm and upper gradient inequalies together with (\ref{eq:deformation theorem}) yield
$$|X_r(R)|\leq \| h_r\|_\infty \cdot \mass(R) \leq \gamma \varepsilon \| h_r\|_\infty \mass(\bdry T)$$ 
and 
$$|X_r(\bdry S)|\leq \|g_r\|_\infty \cdot \mass(S) \leq \gamma \varepsilon \|g_r\|_\infty \mass(T).$$
Letting $\varepsilon \to 0$ we obtain $X_r(T)=0$ for every $T\in\poly{m}$. This shows that $X_r$ is zero on $\poly{m}$ for every $r>0$. It thus follows from Theorem~\ref{thm:smoothening} that $\| X\|_{q,p}=0$ since $p >n-m$ or $q\leq pn/(n-p)$.   
 \end{proof}
 
 \begin{remark}
 The proof above applies word by word in the case that $q=p=\infty$. Indeed, by Sections \ref{sec:construction Psi case infinity}, the map $\Psi_m$ is well defined and isometric also in this case. Moreover, no finiteness conditions on $q$ and $p$ were placed in the construction of $\Phi_m$ and in the statement of Theorem \ref{thm:smoothening}.
 \end{remark}
 
Corollary~\ref{cor:Wolfe's theorem} now comes as a direct consequence of the remark above together with the following lemma. First recall that by definition the space of flat $m$-cochains in $\R^n$ is the dual space of the space $\mathcal{F}_m$ of flat $m$-chains in $\R^n$.
 
\begin{lemma}   \label{lem:flat cochains}
The space of flat $m$-cochains in $\R^n$ is isometrically isomorphic to $W_{\infty,\infty}(\poly{m})$.
\end{lemma}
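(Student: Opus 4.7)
The strategy is to exhibit two mutually inverse, norm-preserving linear maps between the space of flat $m$-cochains $\mathcal{F}_m^*$ and $W_{\infty,\infty}(\poly{m})$. The key observation, which makes the correspondence essentially tautological, is that for every $T\in\poly{m}$ the flat norm satisfies $|T|_\flat\leq\mass(T)$ (taking the decomposition $T=T+\bdry 0$), and for every $S\in\poly{m+1}$ we have $|\bdry S|_\flat\leq\mass(S)$ (taking $\bdry S=0+\bdry S$). Combined with the density of $\poly{m}$ in $\mathcal{F}_m$, this lets us go back and forth freely.

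In one direction, given a flat cochain $\Phi$ with $\|\Phi\|_\flat=C$, its restriction to $\poly{m}$ is linear, hence an additive cochain in the sense of Definition~\ref{def:cochain}. The two inequalities above yield $|\Phi(T)|\leq C\,\mass(T)=\int C\,d\|T\|$ and $|\Phi(\bdry S)|\leq C\,\mass(S)=\int C\,d\|S\|$, showing that the constant function $C$ is simultaneously an upper norm and an upper gradient of $\Phi|_{\poly{m}}$. Therefore $\Phi|_{\poly{m}}\in\cW_{\infty,\infty}(\poly{m})$ and $\|[\Phi|_{\poly{m}}]\|_{\infty,\infty}\leq\|\Phi\|_\flat$.

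Conversely, given a representative $X\in\cW_{\infty,\infty}(\poly{m})$ with upper norm $h$ and upper gradient $g$ of $L^\infty$-norm at most $C$, the upper norm inequality forces $|X(T)|\leq C\,\mass(T)<\infty$ for every $T\in\poly{m}$, so $X$ is finite-valued and genuinely additive. For any decomposition $T=R+\bdry S$ with $R\in\poly{m}$, $S\in\poly{m+1}$, additivity together with the upper norm and upper gradient inequalities gives $|X(T)|\leq|X(R)|+|X(\bdry S)|\leq C(\mass(R)+\mass(S))$; taking the infimum over such decompositions yields $|X(T)|\leq C\,|T|_\flat$. Thus $X$ is $C$-Lipschitz on the dense subspace $(\poly{m},|\cdot|_\flat)\subset\mathcal{F}_m$ and extends uniquely to a bounded linear functional $\Phi_X\in\mathcal{F}_m^*$ with $\|\Phi_X\|_\flat\leq C$. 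Infimizing over $h$ and $g$ gives $\|\Phi_X\|_\flat\leq\|X\|_{\infty,\infty}$. This assignment descends to equivalence classes because $\|X_1-X_2\|_{\infty,\infty}=0$ implies, via arbitrarily small constant upper norms, that $X_1-X_2\equiv 0$ on $\poly{m}$, so the extensions coincide.

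Finally, the two maps are mutually inverse: extension followed by restriction is obviously the identity on $W_{\infty,\infty}(\poly{m})$, and restriction followed by extension recovers the original flat cochain by uniqueness of continuous extensions from a dense subspace. Chaining the two inequalities yields $\|\Phi\|_\flat=\|[\Phi|_{\poly{m}}]\|_{\infty,\infty}$, proving that the correspondence is isometric. There is no real obstacle here — the argument is a chase through definitions — with the only minor point of care being that an additive cochain in $\cW_{\infty,\infty}(\poly{m})$ automatically takes values in $\R$ (not merely $\overline\R$), which is what allows the extension-by-density argument to apply cleanly.
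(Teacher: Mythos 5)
The direction from flat cochains to $W_{\infty,\infty}(\poly{m})$ is correct and matches the paper: restrict to $\poly{m}$ and observe that the constant $\mathcal F(X)$ serves as both upper norm and upper gradient.

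The converse direction, however, has a genuine gap. You assert that the upper norm inequality forces $|X(T)|\leq C\,\mass(T)$ and $|X(R)|\leq C\,\mass(R)$, $|X(\partial S)|\leq C\,\mass(S)$ when $h,g$ have $L^\infty$-norm at most $C$. This does not follow. The bound $\|h\|_\infty\leq C$ only means $h\leq C$ Lebesgue-almost everywhere, but $\|T\|$ is an $m$-dimensional Hausdorff measure supported on a Lebesgue-null set when $m<n$, so $\{h>C\}$ (or even $\{h=\infty\}$) can carry positive, or full, $\|T\|$-mass. Thus from $|X(T)|\leq\int h\,d\|T\|$ you cannot deduce $|X(T)|\leq C\,\mass(T)$, nor even that $X(T)$ is finite. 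The "minor point of care" you flag at the end — that a cochain in $\cW_{\infty,\infty}(\poly{m})$ is automatically real-valued — is in fact the crux, and is not automatic.

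This is precisely why the paper's proof routes through the averaged cochains $X_r$. The averaged upper norm $h_r(x)=\fint_{B(0,r)}h(x+y)\,dy$ is bounded by $\|h\|_\infty$ at \emph{every} point, not merely Lebesgue-a.e., so $\int h_r\,d\|R\|\leq\|h\|_\infty\mass(R)$ is legitimate, and the desired flat-norm estimate holds for $X_r$. One then invokes Theorem~\ref{thm:smoothening} (the continuity-of-averages result, with $q=p=\infty$) to pass from $X_r(T)$ to $X(T)$. To repair your argument you would need to insert this averaging step — or give an independent argument that an optimal upper norm (respectively upper gradient) can be taken pointwise bounded, which is not obvious and is not established elsewhere in the paper.
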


\begin{proof}
Let $X\in W_{\infty,\infty}(\poly{m})$ be a cochain with upper norm $h\in L^\infty(\R^n)$ and upper gradient $g\in L^\infty(\R^n)$. Fix $T\in\poly{m}$. Let $R\in\poly{m}$ and $S\in\poly{m+1}$ be such that $T = R + \partial S$ and let $r>0$. Since $|X_r(R)|\leq \|h\|_\infty \mass(R)$ and $|X_r(\partial S)|\leq \|g\|_\infty \mass(S)$ it follows that 
\begin{equation*}
 |X_r(T)|\leq \max\{\|h\|_\infty, \|g\|_\infty\} (\mass(R) + \mass(S)).
\end{equation*}
 Since $h$, $g$, $R$, and $S$ were arbitrary it follows that $$|X_r(T)|\leq \|X\|_{\infty, \infty} |T|_\flat$$ for every $T\in\poly{m}$ and every $r>0$. By Theorem~\ref{thm:smoothening} (with $q=p=\infty$), $X_r(T)$ converges to $X(T)$ for every $T$ and hence $$|X(T)|\leq \|X\|_{\infty, \infty} |T|_\flat.$$ Since $\poly{m}$ is dense in $\mathcal{F}_m$ it follows that there exists a unique extension to a continuous linear functional $\bar{X}:\mathcal{F}_m\to\R$ satisfying $\mathcal{F}(X)\leq \|X\|_{\infty,\infty}$, where $\mathcal{F}(X)$ is the dual norm to the flat norm $| \cdot |_\flat$.

Conversely, let $X$ be a flat $m$-cochain and set $h=g=\mathcal{F}(X)$. It follows that for every $T\in\poly{m}$ and every $S\in\poly{m+1}$ we have
$$|X(T)|\leq \mathcal F(X)\cdot |T|_\flat\leq \mathcal F(X) \cdot \mass (T) = \int_{\R^n} h\,d\|T\|$$
and
$$|X(\bdry S)|\leq \mathcal F(X)\cdot |\bdry S|_\flat\leq \mathcal F(X)\cdot \mass (S) = \int_{\R^n} g\,d\|S\|,$$
which shows that $h$ and $g$ are upper norm and upper gradient of $X$, respectively. Therefore, the restriction of $X$ to $\poly{m}$ is a cochain in  $W_{\infty,\infty}(\poly{m})$ and $\| X \|_{\infty,\infty} \leq \mathcal F(X)$. Since these two maps are clearly inverses of each other the proof is complete.
\end{proof}

 We finally turn to the proof of Theorem \ref{thm:main-result-closed}. This theorem is a direct consequence of the following three lemmas. Fix $1\leq m\leq n-1$ as well as $s>n-m+1$ and $1<p<\infty$.

 \begin{lemma}
 The space $W_p(\poly{m}^0)$ is isometrically isomorphic to the space 
\begin{equation}\label{eq:def-space-V}
 V:= \{ Y\in W_{p,s}(\poly{m+1}): dY=0 \}.
 \end{equation}
 \end{lemma}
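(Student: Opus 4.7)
The isomorphism should be given by the coboundary operator. Define $\Phi \colon W_p(\poly{m}^0) \to V$ by $\Phi(X) := dX$, where $dX(S) := X(\partial S)$ for $S \in \poly{m+1}$. If $g \in L^p(\R^n)$ is an upper gradient of $X$ then, by definition of the coboundary, $g$ is an upper norm of $dX$. Moreover, for every $S' \in \poly{m+2}$ one has $d(dX)(S') = X(\partial \partial S') = 0$, so the zero function is an upper gradient of $dX$ in $L^s(\R^n)$. Hence $dX$ lies in $V$ and
$$
\|dX\|_{p,s} \;=\; \inf \|h\|_p \;=\; \|X\|_p,
$$
where the infimum is taken over upper norms $h$ of $dX$, equivalently over upper gradients of $X$. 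Thus $\Phi$ is a well-defined linear isometric embedding.

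For surjectivity I construct an inverse $\Psi \colon V \to W_p(\poly{m}^0)$. For $T \in \poly{m}^0$ the contractibility of $\R^n$ yields $S \in \poly{m+1}$ with $\partial S = T$; for example one may take $S := \rho_\#(\llbracket 0,1 \rrbracket \times T)$ for the linear cone map $\rho(t,x) := tx$, whose push-forward gives a polyhedral chain with $\partial S = T$. Set $\Psi(Y)(T) := Y(S)$. Well-definedness requires that $Y$ vanish on any cycle $S_1 - S_2 \in \poly{m+1}^0$ arising from two choices $\partial S_1 = \partial S_2 = T$. When $m \leq n-2$, contractibility again produces $\tilde S \in \poly{m+2}$ with $\partial \tilde S = S_1 - S_2$, whence $Y(S_1 - S_2) = dY(\tilde S) = 0$. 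When $m = n-1$ we instead use the combinatorial fact that $\poly{n}^0(\R^n) = \{0\}$: at any topologically outermost $(n-1)$-face of the support of a polyhedral $n$-cycle only one polyhedron contributes to $\partial$, forcing its coefficient to vanish, after which iteration exhausts the chain, so $S_1 = S_2$ trivially. Given well-definedness, additivity of $\Psi(Y)$ is inherited from $Y$, and if $h \in L^p$ is an upper norm of $Y$ then for any admissible $S$
$$
|\Psi(Y)(T)| \;=\; |Y(S)| \;\leq\; \int_{\R^n} h \, d\|S\|,
$$
exhibiting $h$ as an upper gradient of $\Psi(Y)$. Consequently $\|\Psi(Y)\|_p \leq \inf \|h\|_p = \|Y\|_{p,s}$, where the $L^s$ part of the $(p,s)$-norm of $Y$ vanishes because $dY = 0$. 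The identities $\Phi\circ\Psi = \mathrm{id}_V$ and $\Psi\circ\Phi = \mathrm{id}$ are immediate from the constructions.

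The main obstacle is that the condition \emph{$dY = 0$} in the definition of $V$ a priori only holds modulo equivalence in $W_{s,t}(\poly{m+2})$; via Fuglede's lemma it is equivalent to $Y(\partial \tilde S) = 0$ for every $\tilde S \in \poly{m+2}$ outside a family of $s$-modulus zero. To make the well-definedness argument literal one must first select a good representative of the equivalence class of $Y$; then $\Psi(Y)$ is unambiguously defined up to a family of $p$-capacity zero in $\poly{m}^0$ and therefore descends to a single element of $W_p(\poly{m}^0)$. Handling this equivalence-class subtlety together with the combinatorial verification that $\poly{n}^0(\R^n) = \{0\}$ in the top-dimensional case is where the technical content of the proof lies.
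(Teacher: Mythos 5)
Your forward map is precisely the paper's $\varrho$ (the coboundary), and the isometry computation is the same. The problem is entirely in the surjectivity direction, and you have correctly located the difficulty but not resolved it: your definition $\Psi(Y)(T):=Y(S)$ with $\partial S=T$ requires $Y(S_1)=Y(S_2)$ whenever $\partial S_1=\partial S_2$, and your argument for this produces some $\tilde S\in\poly{m+2}$ with $\partial\tilde S=S_1-S_2$ and invokes $dY(\tilde S)=0$ --- but $dY=0$ in $W_{s,s}(\poly{m+2})$ only means $dY(\tilde S)=0$ for $\tilde S$ outside some family of zero $M_s$-modulus, and there is no reason the particular $\tilde S$ you produced avoids that family, nor that the cycle $S_1-S_2$ admits \emph{any} filling avoiding it. Saying ``one must first select a good representative'' names the problem but supplies no mechanism: it is not at all clear that the equivalence class of $Y$ contains an additive representative whose coboundary vanishes \emph{identically} on $\poly{m+2}$, and this is exactly the technical content the lemma requires.

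The paper's resolution is the averaging operator. For each $r>0$ one has $(dY)_r\equiv 0$ \emph{exactly} (not just a.e.): since $\|dY\|_{s,s}=0$, Lemma~\ref{lem:modulus translates} forces $dY({\varphi_x}_\#\tilde S)=0$ for a.e.\ $x$ for every fixed $\tilde S$, and hence $(dY)_r(\tilde S)=\fint Y(\partial{\varphi_x}_\#\tilde S)\,dx=0$ for every $\tilde S$. Thus $Y_r(S)=Y_r(S')$ whenever $\partial S=\partial S'$ (here both the cone-filling for $m\le n-2$ and $\poly n^0=\{0\}$ for $m=n-1$ are used, as you also noted). One then sets $X(T):=\lim_{r\to 0}Y_r(S)$, using Theorem~\ref{thm:smoothening} to show the limit exists and agrees with $Y(S)$ off an $M_p$-null family $\Lambda\subset\poly{m+1}$, and adds an auxiliary function $f$ with $\int f\,d\|S\|=\infty$ for $S\in\Lambda$ so that $h+f$ is an honest (not weak) upper gradient of $X$. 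Your sketch omits the averaging, the appeal to Theorem~\ref{thm:smoothening}, and the handling of the Fuglede exceptional set in verifying the upper gradient and $\varrho(X)=Y$; these are precisely the steps that make the inverse construction go through, so as written the proposal has a genuine gap.
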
 
 
 Note that the norm $\| Y\|_{p,s}$ of an element $Y\in V$ is independent of the value of $s$ and that $dY=0$ means equality as an element in $W_{s,s}(\poly{m+2})$.
 
 \begin{proof}
 We construct a linear isometric map $\varrho: W_p(\poly{m}^0) \to V$
 as follows. Let $X\in\cW_p(\poly{m}^0)$ be an additive cochain and define a function $\varrho(X):\poly{m+1}\to \overline\R$ by $\varrho(X)(S):=X(\bdry S)$. This is clearly an additive cochain, and a non-negative Borel function is an upper gradient of $X$ if and only if it is an upper norm of $\varrho(X)$. Moreover, the constant function zero is an upper gradient of $\varrho(X)$. This shows that $\varrho(X)$ is in $\cW_{p,s}(\poly{m+1})$ and $d\varrho(X)=0$ everywhere. It follows easily that if $X, X'\in \cW_p(\poly{m}^0)$ belong to the same equivalence class, then $\varrho(X)$ and $\varrho(X')$ are equivalent as elements of $\cW_{p,s}(\poly{m+1})$. Thus, $\varrho$ is well-defined as a map from $W_p(\poly{m}^0)$ to $V$ and is clearly linear and isometric.
 
 It remains to show that $\varrho$ is surjective. For this, let $Y\in\cW_{p,s}(\poly{m+1})$ be an additive cochain such that $dY=0$ as an element of $\cW_{s,s}(\poly{m+2})$. For $r>0$, let $Y_r$ be the averaged cochain and note that 
 \begin{equation}  \label{eq:average Y}
 Y_r(S+S')=Y_r(S)+Y_r(S')
 \end{equation}
 for all $S,S'\in\poly{m+1}$. Since $(dY)_r \equiv 0$, it follows that $Y_r(S)=Y_r(S')$ for all $S$ and $S'$ for which $\bdry S= \bdry S'$. Define a function $X:\poly{m}^0 \to \overline \R$ as follows. Let $T\in\poly{m}^0$ and let $S\in\poly{m+1}$ be any element with $\bdry S=T$. Define $X(T):= \lim_{r\to 0} Y_r(S)$ if the limit exists, and $X(T)=\infty$ otherwise. Note that the existence and the value of the limit is independent of the choice of $S$ by the remark above. It follows directly from the definition and from \eqref{eq:average Y} that $X$ is an additive cochain. We now show that $X$ has an upper gradient in $L^p(\R^n)$. Indeed, by Theorem \ref{thm:smoothening}, there exists $\Lambda \subset \poly{m+1}$ such that $M_p(\Lambda)=0$ and such that 
 $$| Y_r(S)-Y(S)|\to 0 \text{ as } r\to 0$$
 for every $S\in \poly{m+1} \setminus \Lambda$. Since $M_p(\Lambda)=0$, there exists a non-negative Borel function $f\in L^p(\R^n)$ such that 
 $$\int_{\R^n} f d\| S\|=\infty$$
 for every $S\in\Lambda$. Now let $h\in L^p(\R^n)$ be an upper norm of $Y$. It is easy to see that $h+f$ is an upper gradient of $X$. Indeed, let $T\in\poly{m}^0$ and $S\in\poly{m+1}$ such that $\bdry S=T$. If $S\not\in \Lambda$, then $|Y_r(S)-Y(S)|\to 0$ and thus $X(T)=Y(S)$ and
 $$|X(T)| = |Y(S)| \leq \int_{\R^n} h\, d\| S\|.$$
 If $S\in \Lambda$, then
 $$|X(T)|\leq \infty=\int_{\R^n} f\, d\| S\|.$$
 This shows that $h+f$ is an upper gradient of $X$ and thus $X\in\cW_p(\poly{m}^0)$, as claimed.
 Finally, for every $S\in\poly{m+1}\setminus \Lambda$, we have that
 $$\varrho(X)(S)=X(\bdry S)=Y(S)$$
 by the above and, therefore, $\varrho(X)=Y$ as elements of $W_{p,s}(\poly{m+1})$. This shows that $\varrho$ is surjective.
 \end{proof}

 \begin{lemma}
 The space $V$ defined in \eqref{eq:def-space-V} is isometrically isomorphic to 
 \begin{equation}\label{eq:def-space-U}
  U:=\left\{ \omega\in W_d^{p,s}\left(\R^n,\bigwedge\nolimits^{m+1}\right): d\omega=0 \right\}.
 \end{equation}
 \end{lemma}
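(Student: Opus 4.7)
The plan is to use the isometric isomorphism $\Psi_{m+1}: W_d^{p,s}(\R^n, \bigwedge^{m+1}) \to W_{p,s}(\poly{m+1})$ provided by Theorem~\ref{thm:main result} and show that it restricts to the desired isomorphism between the closed subspaces $U$ and $V$. To apply Theorem~\ref{thm:main result} with exponents $(q,p)$ of the theorem taken to be our $(p,s)$, we need $s > n-(m+1)$ or $p \leq sn/(n-s)$. Since we have chosen $s > n-m+1 > n-m-1$, the first condition is satisfied, so $\Psi_{m+1}$ is a linear, norm-preserving bijection. In the boundary case $m+1 = n$, the condition is vacuous and the theorem still applies.

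The key tool is the commutation identity \eqref{eq:psi-commutes-d}, namely $\Psi_{m+2} \circ d = d \circ \Psi_{m+1}$, viewed as an identity of elements of $W_{s,s'}(\poly{m+2})$ for any $s' \geq 1$. First I would show that $\Psi_{m+1}(U) \subset V$: for $\omega \in U$ we have $d\omega = 0$ so $\Psi_{m+2}(d\omega) = 0$, and hence $d\Psi_{m+1}(\omega) = 0$ in $W_{s,s}(\poly{m+2})$, which means $\Psi_{m+1}(\omega) \in V$.

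For the reverse inclusion, given $Y \in V$, surjectivity of $\Psi_{m+1}$ produces $\omega \in W_d^{p,s}(\R^n, \bigwedge^{m+1})$ with $\Psi_{m+1}(\omega) = Y$. Then $\Psi_{m+2}(d\omega) = dY = 0$ as an element of $W_{s,s}(\poly{m+2})$. Here I would apply Theorem~\ref{thm:main result} once more, this time to $\Psi_{m+2}: W_d^{s,s}(\R^n, \bigwedge^{m+2}) \to W_{s,s}(\poly{m+2})$; the hypothesis $s > n-(m+1)$ is again satisfied since $s > n-m+1$, and when $m+2 > n$ both spaces are trivial so there is nothing to check. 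The isometry of $\Psi_{m+2}$ then forces $\|d\omega\|_{s,s} = 0$, i.e.\ $d\omega = 0$ almost everywhere, so $\omega \in U$.

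It remains to verify that the restriction of $\Psi_{m+1}$ preserves the norms $\|\cdot\|_{p,s}$ on $U$ and $V$; but this is immediate since $\Psi_{m+1}$ is isometric on the entire space $W_d^{p,s}(\R^n, \bigwedge^{m+1})$ by Theorem~\ref{thm:main result}. The only delicate point is to make sure that in applying Theorem~\ref{thm:main result} twice (once at degree $m+1$ and once at degree $m+2$) the exponent hypotheses are met; both reduce to $s > n - m - 1$, which holds by our standing assumption $s > n - m + 1$. This is where the specific choice of $s$ in the preceding lemma pays off and is the only place in the argument requiring genuine care.
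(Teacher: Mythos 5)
Your proof is correct and follows the paper's approach: both identify $U$ with $V$ via the isometry $\Psi_{m+1}$ and use the commutation identity \eqref{eq:psi-commutes-d} to show $\Psi_{m+1}(U)\subset V$. The only real difference is in the surjectivity step: given $Y\in V$, the paper takes the explicit preimage $\omega^Y=\Phi_{m+1}(Y)$ and reads $d\omega^Y=0$ directly off the defining formula \eqref{eq:d-omega-beta}, whereas you produce a preimage $\omega$ abstractly and then invoke injectivity of $\Psi_{m+2}$ (a second application of Theorem \ref{thm:main result}) to conclude $d\omega=0$; both are valid, and yours trades inspection of the $\Phi_{m+1}$-construction for an additional exponent check and the boundary case $m+2>n$, which you handle correctly. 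One small slip: for $\Psi_{m+2}$ the exponent hypothesis of Theorem \ref{thm:main result} reads $s>n-(m+2)$, not $s>n-(m+1)$ as you wrote, but since $s>n-m+1$ implies both, the conclusion is unaffected.
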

  
  \begin{proof}
 We define a linear isometric map from $U$ to $V$ as follows. Given $\omega\in U$ set $X^\omega=\Psi_{m+1}(\omega)$, where $\Psi_{m+1}$ is the linear isometric map defined in Proposition \ref{prop:Psi-m-isometric}. It follows that $X^\omega \in W_{p,s}(\poly{m+1})$. Moreover, if $m<n-1$, then \eqref{eq:psi-commutes-d} shows that $dX^\omega=\Psi_{m+2}(d\omega)=0$ as an element of $W_{s,s}(\poly{m+2})$. If $m=n-1$, then $dX^\omega=0$ trivially. In particular, we have that $X^\omega\in V$. Since $\Psi_{m+1}$ is linear and isometric, it follows that the map $\omega \mapsto X^\omega$ is linear and isometric. It remains to show that this map is surjective. For this, let $Y\in W_{p,s}(\poly{m+1})$ with $dY=0$. Define $\omega^Y=\Phi_{m+1}(Y)$, where $\Phi_{m+1}$ is the map defined in Section \ref{sec:cochain}. Since $dY=0$, it follows from \eqref{eq:d-omega-beta} that $d\omega^Y=0$ and hence $\omega^Y\in U$. Since $\Psi_{m+1}\circ \Phi_{m+1}$ is the identity, see the proof of Theorem \ref{thm:main result}, it follows that $X^{\omega^Y}=Y$. This shows that the map $\omega \mapsto X^\omega$ is surjective.
  \end{proof}

 \begin{lemma}
 The space $U$ defined in \eqref{eq:def-space-U} is isometrically isomorphic to the space $\overline{W}_d^{p}(\R^n,\bigwedge^{m})$.
 \end{lemma}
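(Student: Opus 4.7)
The plan is to exhibit an isomorphism $\Pi: \overline{W}_d^p(\R^n, \bigwedge^m) \to U$ given by the exterior derivative, $\Pi([\omega]) := d\omega$, and to show that it is linear, injective, surjective, and isometric.

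First I would verify the formal properties of $\Pi$. For $\omega \in W_{d,\loc}^{1,p}(\R^n, \bigwedge^m)$, the distributional exterior derivative $d\omega$ has $L^p$ coefficients by definition of the space; the identity $d\circ d=0$, valid at the level of distributions via integration by parts against compactly supported test forms, then shows that $d\omega \in W_d^{p,s}(\R^n, \bigwedge^{m+1})$ with $d(d\omega)=0$, hence $d\omega \in U$. This is independent of the representative of the equivalence class, so $\Pi$ is well-defined and linear. It is injective because $\Pi([\omega])=0$ says $d\omega=0$, which is exactly the condition that $[\omega]=0$ in the quotient. It is isometric because
$$
\|\Pi([\omega])\|_{p,s} = \max\{\|d\omega\|_p, \|d(d\omega)\|_s\} = \|d\omega\|_p = \|[\omega]\|_p.
$$

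The substantive step is surjectivity: given $\eta \in U$, i.e.\ an $L^p$-integrable $(m+1)$-form on $\R^n$ with $d\eta=0$ distributionally, produce $\omega$ with locally integrable coefficients satisfying $d\omega=\eta$. This is a Poincar\'e lemma for closed $L^p$ forms on $\R^n$. To prove it, I would use local $L^p$ Poincar\'e lemmas (for instance, the Iwaniec--Lutoborski homotopy operator on star-shaped domains, or a Bogovskii-type construction) to produce, on each ball $B_k := B(0,k)$, a primitive $\omega_k \in L^p(B_k,\bigwedge^m)$ with $d\omega_k = \eta|_{B_k}$. On $B_{k-1}$, the difference $\omega_{k+1}-\omega_k$ is a closed $L^p$ form, hence itself exact on $B_{k-1}$ by the same local lemma; an inductive correction of the $\omega_k$ by such exact pieces makes them agree on overlaps, yielding a globally defined $\omega \in L^p_{\loc}(\R^n,\bigwedge^m) \subset L^1_{\loc}(\R^n,\bigwedge^m)$ with $d\omega=\eta$.

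The principal obstacle lies in the regime $p\le n/(m+1)$, where the naive radial homotopy $K\eta(x)=\int_0^1 t^m \iota_x \eta(tx)\,dt$ already fails to produce an $L^1_{\loc}$ primitive: a direct estimate shows $\int_{B_R}|K\eta|\,dx$ is controlled by a multiple of $\int_0^1 t^{m-n/p}\,dt$, which diverges in this range. The appeal to Bogovskii-type local operators together with the patching step above is what is needed to handle small $p$ and deliver the Poincar\'e lemma uniformly in $1<p<\infty$.
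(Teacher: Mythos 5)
Your proof is correct, and the formal parts (well-definedness, linearity, injectivity, isometry) match the paper exactly. For surjectivity you take a genuinely different route. The paper applies a single, globally defined Iwaniec--Lutoborski chain-homotopy operator
$T: L^p_{\loc}(\R^n, \bigwedge^{m+1}) \to L^p_{\loc}(\R^n, \bigwedge^m)$
(a Bogovskii-type operator averaged over a fixed ball) and reads off $\nu = dT(\nu)$ directly from the chain-homotopy identity once $d\nu=0$; this produces the primitive in one stroke, with local $L^p$ bounds coming from the Calder\'on--Zygmund estimates in \cite{IL93}. You instead apply local Bogovskii/IL operators on the balls $B(0,k)$ and then patch the resulting primitives inductively. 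Both routes hinge on the same key observation you make explicitly and the paper leaves implicit: the naive single-point Cartan homotopy $K\eta(x)=\int_0^1 t^m\iota_x\eta(tx)\,dt$ gives $\int_{B_R}|K\eta|\lesssim \int_0^1 t^{m-n/p}\,dt$, which diverges when $p\le n/(m+1)$, so the averaged Bogovskii-type operator is essential. Your patching step is a standard telescoping argument, but as stated it is a little terse: the correction of $\omega_{k+1}$ by an exact piece requires multiplying the $(m-1)$-form primitive $\sigma_k$ (with $d\sigma_k=\omega_{k+1}-\omega_k$ on $B_k$) by a cutoff $\chi_k\equiv 1$ on $B_{k-1}$, $\spt\chi_k\subset B_k$, and setting $\omega_{k+1}-d(\chi_k\sigma_k)$; for this to remain an $L^p_{\loc}$ form one needs $\sigma_k\in W^{1,p}(B_k)$, which the IL operator provides (it maps $L^p\to W^{1,p}$). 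With that detail filled in, the argument is complete. The paper's approach is slicker (no patching), while yours is marginally more self-contained in that it only invokes the local Poincar\'e lemma on balls.
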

 
 \begin{proof}
 Clearly, the map $\overline{W}_d^{p}(\R^n,\bigwedge^{m}) \to U$ given by $[\omega ] \mapsto d\omega$ is well-defined, linear, and isometric. In order to show that it is surjective, let $\nu \in U$. Let 
 $$T:L^p_{\loc}\left(\R^n,\bigwedge\nolimits^{m+1}\right) \to L^p_{\loc}\left(\R^n,\bigwedge\nolimits^{m}\right)$$
 be the chain homotopy operator defined in \cite{IL93} and set $\omega=T(\nu)$. The coefficients of $\omega$ are in $L^p_{\loc}(\R^n)$ and thus, in particular, in $L^1_{\loc}(\R^n)$. By the chain homotopy formula \cite[Lemma 4.2]{IL93}, $T(\nu)$ has a distributional exterior derivative $dT(\nu)$ in $L^p_{\loc}\left(\R^n,\bigwedge^{m+1}\right)$ and
 $$\nu=T(d\nu)+dT(\nu)=dT(\nu).$$
 The last equality is a consequence of the fact that $d\nu=0$.
 It follows that $dT(\nu)\in L^p \left(\R^n,\bigwedge^{m+1}\right)$ and hence $T(\nu)\in W_{d,\loc}^{1,p}(\R^n,\bigwedge^{m})$. Since $dT(\nu)=\nu$, this shows that the map $[\omega] \mapsto d\omega$ is surjective.
 \end{proof}
 

\end{document}